\theoremstyle{plain}
\newtheorem{theorem}{Theorem}[section]
\newtheorem{remark}{Remark}[section]
\newtheorem{lemma}{Lemma}[section]
\newtheorem{assumption}{Assumption}[section]
\newtheorem{proposition}{Proposition}[section]
\numberwithin{equation}{section}
\newcommand{\N}{\mathbb{N}}
\newcommand{\R}{\mathbb{R}}
\renewcommand{\d}{\mathrm{d}}
\title{On the Degree of Ill-Posedness of Multi-Dimensional Magnetic Particle Imaging}
\author{Tobias Kluth\thanks{Center for Industrial Mathematics, University of Bremen,
Bibliothekstr. 5, 28357 Bremen, Germany ({tkluth@math.uni-bremen.de})} \and
Bangti Jin\thanks{Department of Computer Science, University College London, Gower Street,
London WC1E 6BT, UK ({b.jin@ucl.ac.uk, bangti.jin@gmail.com})}
\and Guanglian Li\thanks{Department of Mathematics, Imperial College London, London SW7 2AZ, UK. The work was partially carried out when the
author was affiliated with Institut f\"ur Numerische Simulation and Hausdorff Center for Mathematics, Universit\"at Bonn, Wegelerstra\ss e
6, D-53115 Bonn, Germany. (lotusli0707@gmail.com, guanglian.li@imperial.ac.uk).}
}
\begin{document}
\maketitle

\begin{abstract}
Magnetic particle imaging is an imaging modality of relatively recent origin, and it exploits
the nonlinear magnetization response for reconstructing the concentration of nanoparticles.
Since first invented in 2005, it has received much interest in the literature.
In this work, we study one prototypical mathematical model in multi-dimension, i.e., the equilibrium model, which
formulates the problem as a linear Fredholm integral equation of the first kind. We analyze the
degree of ill-posedness of the associated linear integral operator by means of the singular
value decay estimate for Sobolev smooth bivariate functions, and discuss the influence of various
experimental parameters on the decay rate. In particular, applied magnetic fields with a field free point and a
field free line are distinguished. The study is complemented with extensive numerical experiments.\\
{\bf Keywords}: magnetic particle imaging; degree of ill-posedness; equilibrium model; singular
value decay; Sobolev smooth bivariate functions.
\end{abstract}

\section{Introduction}

Magnetic particle imaging (MPI) is a relatively new imaging modality \cite{Gleich2005}.
The main goal is to reconstruct a spatially dependent concentration of iron oxide nanoparticles by
exploiting their superparamagnetic behavior. Measurements are obtained from multiple receive coils where
a voltage is induced by particles' nonlinear response to the applied dynamic magnetic field using
either field free point (FFP) \cite{Gleich2005} or field free line (FFL) \cite{Weizenecker2008ffl} trajectories.
These measurements can yield reconstructions with a high spatial/temporal resolution.
Since the modality is free from harmful radiation, it is especially beneficial for in-vivo applications.

So far, MPI has been used for preclinical medical applications, and holds a significant potential for clinical applications.
One application, already suggested at the beginning of the MPI development, is vascular imaging \cite{Gleich2005}.
In in-vivo experiments, the potential for imaging blood flow was demonstrated using healthy mice \cite{weizenecker2009three}.
Recently, it was studied for long-term circulating tracers \cite{khandhar2017evaluation}.
The high temporal resolution allows tracking medical instruments \cite{haegele2012magnetic},
e.g., in angioplasty \cite{Salamon:2016cz}. Other potential applications include
cancer detection \cite{Yu2017} and cancer treatment by hyperthermia \cite{murase2015usefulness}.

In practice, MPI is usually modeled by a linear Fredholm integral equation of the first kind.
This is motivated by the suppression of particle interactions due to nonmagnetic coating,
which allows postulating a linear relationship between particle concentration and the measured voltage.
However, precisely modeling MPI respectively formulating a physically accurate integral
kernel for image reconstruction is still an unsolved problem due to various modeling errors in the
particle dynamics and data acquisition, e.g., magnetization dynamics, particle-particle interactions
and transfer function for analog filter; we refer interested readers to the survey paper \cite{Kluth:2017}
for further details. In the literature, the equilibrium model based on the Langevin function has
been used extensively to predict the signal behavior in MPI \cite{Knopp2012,Knopp2017}; see
Section \ref{sec:prelim} below for details on the model, its derivation and the underlying assumptions.

The mathematical study on the MPI model is fairly scarce. The only work on FFP trajectories
that we are aware of is the work \cite{maerz2015modelbased}.
In a 1D setup with FFP trajectories moving along a line for the equilibrium model, the authors
\cite{maerz2015modelbased} showed that in the limit of large particle diameters, the integral
kernel is a Dirac-delta function, and thus the imaging problem is well-posed.
Further, they analyzed a related problem which is independent of the FFP trajectory used,
under the assumption that each spatial point is scanned multiple times
with nonparallel trajectories. The problem was shown to be severely ill-posed in general, and
in the large particle diameter limit, the smoothing property of the forward operator
improves with the spatial dimension $d$. Also, if a FFL is moved by a drive field in its
perpendicular direction, the problem can be formulated using Radon transform, followed by
a convolution with a kernel involving the mean magnetic moment \cite{Knopp2011FFLfourierslice}.
However, the theoretical analysis of general FFL trajectories remains missing.

In this work, we present a study on the degree of ill-posedness of the MPI inverse problem.
Historically, the idea of distinguishing mildly, moderately and severely ill-posed problems can be
traced at least back to Grace Wahba \cite{Wahba:1980}.
Since the 1980s, the concept ``degree of ill-posedness'' for linear inverse problems has been
popular. Roughly, it refers to the decay behavior of the singular values (SVs) $\sigma_n$:
 $\sim n^{-\nu}$ with small $0<\nu<1$ for mildly ill-posed problems, with $1\leq
\nu <\infty$ for moderately ill-posed ones, and $e^{-\nu n}$ with $\nu>0$ for severely
ill-posed ones. There are at least two reasons to look at the SV decay rate (see, e.g.,
\cite{DickenMaass:1996,HofmannScherzer:1994}). First, it characterizes the degree of
ill-posedness of the imaging problem, which is one factor in determining the
resolution limit of the image reconstruction step,  when the data accuracy and model accuracy
etc. are given. Second, the analysis also sheds insight into how to improve the resolution by properly
changing the experimental setting. Hence, over the past few decades, SV decay estimates have
received much interest for a number of inverse problems, especially in the context of computed
tomography.

In this work, we study the degree of ill-posedness of MPI via SV decay of the associated linear
integral operator. Our analysis relies crucially on the SV decay estimate for Sobolev smooth
bivariate functions \cite{GriebelLi:2017}, and its
extension to less regular bivariate functions. The extension seems still unavailable and will
be given in this work, and the result is of independent interest. Our results give upper bounds
on SV decay rates, which indicate the (best possible) degree of ill-posedness of the MPI model.
We discuss the following three  cases separately: nonfiltered model, limit model and
filtered model, in order to illuminate the influences of experimental parameters, e.g., particle
size and the regularity of the analog filter. Further, we conduct extensive numerical experiments
to complement the analysis. When completing the paper, the authors became aware of the work
\cite{ErbWeinmannAhlborg:2017}, where Erb et al. showed the exponential ill-posedness of a 1D
MPI model. It differs substantially from this work in the main focus (multi-dimensional model) and analytical tools.

Note that the SV decay is only one factor for the ``level of recovery chances''
to linear ill-posed problems, as already emphasized by Louis
\cite{Louis:1989} in 1989, with the other being ``solution smoothness with respect
to the character of the forward operator''. Only both factors and
their interplay allow realistic error estimates on the reconstructions from noisy data, and the
degree of ill-posedness must be put into the context of regularization.
Such an analysis is beyond the scope of this work; see
the works \cite{MatheHofmann:2008,HofmannKindermann:2010} for relevant results.

The remainder of the paper is organized as follows. In Section \ref{sec:prelim}, we
describe the equilibrium model, and in Section \ref{sec:svd},
the SV decay estimate for Sobolev smooth bivariate functions.
Then in Section \ref{sec:anal}, we analyze the decay rate
for the integral operators for nonfiltered and filtered models, and
discuss the influence of various factors, e.g., spatial
dimensionality $d$ and particle parameter $\beta$. In Section \ref{sec:numer}, we present
numerical results to support the analytical findings, and finally, in Section \ref{sec:conc},
we give concluding remarks. Throughout, we denote by $C$ with/without subscript a generic constant
which may differ at each occurrence.

\section{The equilibrium model}\label{sec:prelim}

Now we describe the equilibrium model, one prototypical mathematical model for MPI.

\subsection{Preliminaries}
MPI is inherently a 3D problem, and thus vector valued functions remain 3D even if the
domain $\Omega$ of the spatial variable $x$ is a subset of a $d$-dimensional affine
subspace $E_d\subset \R^3$. Let $\Omega \subset E_d$, $d=1,2,3$, be a bounded domain
with a (strong) Lipschitz boundary $\partial\Omega$ in $E_d$. Further, let $T>0$ denote the maximal
data acquisition time and $I:=(0,T)$ the time interval during which the measurement
process takes place. The temporal derivative of any function $g: I \rightarrow \R^d$ is
denoted by $\dot{g}$.

In MPI, the measured signal $v_\ell : I \rightarrow \R$, $\ell=1,\hdots,L$, obtained at
$L\in \N$ receive coils, is given by
\begin{equation}
\label{eq:complete-problem}
 v_\ell (t) = \int_I \int_\Omega c(x) {a}_\ell(t-t') \kappa_\ell(x,t) \d x \d t' +
\underset{=v_{\mathrm{E},\ell}(t)}{\underbrace{\int_I \int_{\R^3} {a}_\ell(t-t')\mu_0 p_\ell(x)^t\dot{ H}(x,t)  \d x \d t'}},
\end{equation}
where the superscript $t$ denotes the transpose of a vector,
$c: \Omega \rightarrow \R^+ \cup \{0\}$ is the concentration of the magnetic nanoparticles and
$\kappa_\ell:\Omega \times I \rightarrow \R$, $\ell=1,\hdots,L$, represent the system functions characterizing
the magnetization behavior of nanoparticles. The
positive constant $\mu_0$ is magnetic permeability in vacuum.
The scalar functions $a_\ell: \bar{I}:=[-T:T] \rightarrow \R$, $\ell=1,\hdots,L$, are the analog
filter in the signal acquisition chain, and in practice,
they are often band stop filters adapted to excitation frequencies of the drive field
so as to minimize the adverse influence of the excitation signal $v_{\text{E},\ell}$ during digitization.
The functions $p_\ell: \R^3 \rightarrow \R^3$, $\ell=1,\hdots, L$, denote the vector field which characterizes
the sensitivity profile of the receive coils and can be spatially dependent.
Throughout, it is assumed that the applied magnetic field $H: \R^3 \times I \rightarrow \R^3$ and
the filters $\{a_\ell\}_{\ell=1}^L$ are chosen in a way such that all excitation signals $v_{\mathrm{E},\ell}=0$, $\ell=1,\ldots,L$.

\begin{remark}
The assumption on the excitation signals $\{v_{\mathrm{E},\ell}\}_{\ell=1}^L$ is commonly made,
which, however, may be not fulfilled in MPI
applications \cite{Them2016,Kluth2017}. Note that in the model \eqref{eq:complete-problem}, we have absorbed the minus
sign into the measurement to make the notation more consistent with literature on integral equations.
\end{remark}

The applied magnetic field $H(x,t)$ can be characterized by a spatially dependent
magnetic field $g:\R^3 \rightarrow \R^3$ and a time-dependent homogeneous magnetic field $h:I \rightarrow \R^3$,
and the field $H(x,t)$ is given by their superposition, i.e., $H(x,t) = g(x) -h(t)$. The field $g$, named
{\it selection field}, ensures that a field-free-region is generated. Generally, $g$ is assumed to
be linear such that it can be represented by a constant matrix $G\in \R^{3\times 3}$.
The field $h$, named {\it drive field}, then moves the field-free-region along a certain trajectory.
Two MPI methodologies are distinguished by the field free region, whether a FFP is generated ($\mathrm{rank}(G)
=3$) or a FFL is used ($\mathrm{rank}( G) =2$). For the FFL approach, it was also proposed  to
rotate the selection field $g$ over time such that the FFL is rotated \cite{Weizenecker2008ffl}, and then the selection field
is given by $g: \R^3 \times I \rightarrow \R^3$ with $g(x,t)=P(t)^tGP(t)x$ where $P(t):I\rightarrow
\R^{3\times 3}$ is a rotation matrix for all $t\in I$.

The functions $\{\kappa_\ell\}_{\ell=1}^L$ can be expressed using the receive coil sensitivities
$\{p_\ell\}_{\ell=1}^L$ and the particles' mean magnetic moment vector $\bar{m}:\Omega \times I
\rightarrow \R^3$ as $\kappa_\ell=\mu_0p_\ell^t\dot{\bar m}$.
This relation follows from Faraday's law and the law of reciprocity \cite{Knopp2012}.
Then the inverse problem is to find the concentration $c:\Omega\to \mathbb{R}^+\cup\{0\}$
from $\{v_\ell\}_{\ell=1}^L$:
\begin{equation} \label{eq:general-problem}
 v_\ell(t) =  \int_I \int_\Omega c(x) {a}_\ell(t-t') \kappa_\ell(x,t') \d x \d t', \quad\mbox{with }
\kappa_\ell = \mu_0 p_\ell^t \dot{\bar{m}}.
\end{equation}

In the model \eqref{eq:general-problem}, the linear dependence on the concentration $c$ is derived under
the assumption that particle-particle interactions can be neglected. However, there is experimental evidence that
these interactions can affect the particle signal \cite{loewa2016}.

\subsection{Equilibrium MPI model}
\label{sec:particle_behavior}

To specify the MPI model, it remains to describe the mean magnetic moment vector $\bar m(x,t)$
of nanoparticles. There are several possible models, e.g., Fokker-Planck equation or stochastic Landau-Lifschitz-Gilbert equation
\cite{Kluth:2017}. The most extensively studied
model in MPI is based on the assumptions that the applied magnetic field $H(x,t)$ is static, the particles are
in equilibrium, and $\bar m(x,t)$ immediately follows the magnetic field $H(x,t)$. Then by Langevin theory for
paramagnetism, $\bar{m}(x,t)$ is given by
\begin{equation*}
 \bar{m}(x,t) = m_0\mathcal{L}_\beta(| H(x,t)|) \frac{H(x,t)}{| H(x,t)|},
\end{equation*}
where the parameter $m_0$ is particle's magnetic moment, $|\cdot|$ denotes the Euclidean norm of vectors,
and $\mathcal{L}_\beta: \R \rightarrow \R$ is the (scaled) Langevin function given by
\begin{equation}\label{eq:Langevin}
 \mathcal{L}_\beta(z) =  \coth( \beta z) - (\beta z)^{-1},
\end{equation}
where $\beta$ is a given positive parameter. The Langevin function $\mathcal{L}_\beta(z)$ captures the
nonlinear response to the applied magnetic field. The resulting model is termed as {\it equilibrium model} below.

\begin{remark}
Physically, the parameters $m_0$ and $\beta$ are determined by the saturation magnetization
$M_{\mathrm{S}}$ of the core material, the volume $V_{\mathrm{C}}$ of the single-domain particle's core,
the temperature $T_\mathrm{B}$, and Boltzmann constant $k_\mathrm{B}$, i.e.,
$m_0=M_{\mathrm{S}}V_{\mathrm{C}}$ and $\beta=\mu_0 m_0/(k_\mathrm{B} T_\mathrm{B})$.
Note that $\beta$ also depends on the particle diameter $D$ through $m_0$. At room temperature
$293 \text{ K}$, particles consisting of magnetite with a typical diameter $D$ of $30 \text{ nm}$ $(20 \text{ nm})$
are characterized by $\beta \approx 2.1 \times 10^{-3}$ $(0.6 \times 10^{-3})$.
\end{remark}

The function $L_\beta(z)$ is a smooth approximation to the sign function: for any
fixed $0<\beta<\infty$, it belongs to $C^\infty(\mathbb{R})$, and as $\beta\to\infty$, it
recovers the sign function $\mathrm{sign}(z)$.

\begin{lemma}\label{lem:langevin}
The Langevin function $L_\beta(z)$ has the following properties: $\rm(i)$ For any $z\in\mathbb{R}$,
there holds $\lim_{\beta\to\infty}\mathcal{L}_\beta(z) = {\rm sign}(z)$; and
$\rm(ii)$ The function $\frac{L_\beta(\sqrt{z})}{\sqrt{z}}\in C_B^\infty([0,\infty))$.
\end{lemma}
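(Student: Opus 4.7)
The plan is to treat the two items separately using only elementary analysis. For part (i), since $\mathcal{L}_\beta$ is odd it suffices to handle $z\geq 0$. If $z>0$, then as $\beta\to\infty$ one has $\coth(\beta z)=(e^{\beta z}+e^{-\beta z})/(e^{\beta z}-e^{-\beta z})\to 1$ (indeed exponentially fast) while $(\beta z)^{-1}\to 0$, so $\mathcal{L}_\beta(z)\to 1=\mathrm{sign}(z)$. At $z=0$, oddness or equivalently the Taylor expansion below (whose leading term is $\beta z/3$) gives $\mathcal{L}_\beta(0)=0=\mathrm{sign}(0)$, which settles (i).

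For part (ii) I split smoothness from boundedness. Smoothness of $f(z):=\mathcal{L}_\beta(\sqrt z)/\sqrt z$ on $(0,\infty)$ is automatic from the chain rule, so the real point is smoothness and boundedness near $z=0$. Here I invoke the classical expansion $\coth(v)-v^{-1}=\sum_{k\geq 1}\tfrac{2^{2k}B_{2k}}{(2k)!}v^{2k-1}$, valid for $|v|<\pi$, which shows that $\mathcal{L}_\beta$ is an odd analytic function of $u$ on $|u|<\pi/\beta$. Therefore $\mathcal{L}_\beta(u)/u$ is an even analytic function of $u$, and can be rewritten as a convergent power series in $z=u^2$ on $[0,(\pi/\beta)^2)$. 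This delivers in one stroke an analytic extension of $f$ across $z=0$, and combined with the previous remark gives $f\in C^\infty([0,\infty))$.

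It remains to check that every $f^{(n)}$ is bounded on the full half-line. Fix any $R\in(0,(\pi/\beta)^2)$; on $[0,R]$ the derivative $f^{(n)}$ is continuous on a compact set, hence bounded. For $z\geq R$ I use $\coth(\beta\sqrt z)=1+2/(e^{2\beta\sqrt z}-1)$ to get the decomposition
\begin{equation*}
f(z)=z^{-1/2}-(\beta z)^{-1}+2\bigl[\sqrt z\,(e^{2\beta\sqrt z}-1)\bigr]^{-1}.
\end{equation*}
The $n$-th derivatives of $z^{-1/2}$ and $z^{-1}$ are of order $z^{-n-1/2}$ and $z^{-n-1}$, hence bounded on $[R,\infty)$; an iterated chain rule in the variable $w=\sqrt z$ shows the remainder and all its derivatives are bounded by a polynomial in $1/\sqrt z$ times $e^{-2\beta\sqrt z}$, which decays to zero. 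Combining the two regions gives $f^{(n)}\in L^\infty([0,\infty))$ for every $n$, which is the claim.

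The only mild subtlety I foresee is conceptual rather than technical: on $[R,\infty)$ the individual summands in the decomposition are singular at $z=0$, so those singularities must cancel in order to match the analytic extension coming from the power-series representation near the origin. This is automatic because the two representations coincide pointwise wherever both are defined, so no explicit cancellation computation is required; the splitting at $z=R$ is what lets us avoid having to verify it.
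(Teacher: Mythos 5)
Your proof is correct and follows essentially the same route as the paper: the Bernoulli-number expansion of $\coth$ near the origin to obtain an even power series in $z=u^2$, and the exponential representation $\coth(v)=1+2/(e^{2v}-1)$ for the behavior away from zero (which also gives part (i)). You are somewhat more explicit than the paper about verifying boundedness of all derivatives on $[R,\infty)$, which the paper leaves as a one-line remark, but the underlying argument is the same.
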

\begin{proof}
First, recall the following expansions for the $\coth(z)$ :
\begin{align}\label{eqn:Langevin-small}
 \coth(z) = \frac{1}{z} + \frac{z}{3}-\frac{z^3}{45}+\frac{2}{945}z^5-\ldots+\frac{2^{2n}B_{2n}}{(2n)!}z^{2n-1} + \ldots\quad (|z|<\pi),
\end{align}
where $B_n$ is the $n$th Bernoulli number \cite[4.5.67, p. 85]{AmbramowitzStegun:1964}.
Meanwhile, for any $z>0$, we have
\begin{equation}\label{eqn:Langevin-large}
  \coth(z) = \frac{e^{z}+e^{-z}}{e^z-e^{-z}} = 1 + \frac{2e^{-2z}}{1-e^{-2z}} = 1+2\sum_{j=1}^\infty e^{-2jz},
\end{equation}
and a similar series expansion holds for $z<0$. Now assertion (i) follows directly
from \eqref{eqn:Langevin-large}. For part (ii), it suffices to show $\beta=1$.
For $0\leq z<\pi^2$, it follows from
\eqref{eqn:Langevin-small} and the definition of $L_1(z)$ that
\begin{equation*}
  \frac{L_1(\sqrt{z})}{\sqrt{z}} = \frac{1}{3}-\frac{z}{45}+\frac{2}{945}z^2-\ldots+\frac{2^{2n}B_{2n}}{(2n)!}z^{n-1} + \ldots,
\end{equation*}
which is smooth in $z$ (and convergent for any $0\leq z<\pi^2$). The assertion
 for $z>1$ follows from \eqref{eqn:Langevin-large}.
\end{proof}

In summary, the inverse problem for the equilibrium model is to recover
the concentration $c$ from
\begin{equation}\label{eqn:mpi-general}
\left\{
\begin{aligned}
 v_\ell(t) &=  \int_I \int_\Omega c(x) {a}_\ell(t-t') \kappa_\ell(x,t') \d x \d t', \\
 \kappa_\ell(x,t) & = \mu_0m_0 p_\ell^t \frac{\d}{\d t}\left[ \frac{\mathcal{L}_\beta(| H|)}{|H|} H \right],
\end{aligned}
\right.
\end{equation}
for $\ell=1,\hdots,L$ and the magnetic field $H:\Omega  \times I \rightarrow \R^3$ is given by
$H(x,t)= g(x) - h(t),$ where $g: \Omega  \rightarrow \R^3$  and $h: I \rightarrow \R^3$.
For MPI, a common choice is $g(x)=G x$ and $h(t)= A (\sin(f_i t) )_{i=1}^3$, where
$A\in \R^{3\times 3}$ is a diagonal matrix with $1\leq \text{rank}(A)\leq d$, $f_i>0$,
and a constant matrix $G\in \R^{3\times 3}$ with $\mathrm{tr}(G)=0$. Below, for the matrix $G$, we
distinguish two cases: (i) $G$ has full rank such that a FFP is generated, and
(ii) $\mathrm{rank}(G)=2$ such that a FFL is generated (only for 3D).

In the model \eqref{eqn:mpi-general}, all vectors belong to $\mathbb{R}^3$, which
reflects the intrinsic 3D nature of the MPI imaging problem. However, for a spatial domain $\Omega\subset
\mathbb{R}^d$, $d=1,2$, the dimensionality of the vectors and matrices can be taken to be $d$, by
properly restricting to subvectors/submatrices, which is feasible under the assumption that
$G^{-1}h(t)\in \Omega$, for any $t\in I$ (i.e., field free region is contained in $\Omega$). Specifically, the $d$-dimensional case,
for $d=1,2$, can be constructed by assuming that the concentration $c$ is a Dirac $\delta$-distribution
with respect to the orthogonal complement of the affine subspace $E_d\subset \R^3$, i.e., $c(x)= c_d(x_1)
\delta(x_2)$, where $x=x_1+x_2$ with $x_1\in E_d$, $x_2 \in E_d^\perp$, and $c_d: \Omega \subset E_d
\rightarrow \R^+ \cup \{0\}$. The parametrization of the domain $\Omega\subset E_d$ then allows reformulating
the integral in \eqref{eqn:mpi-general} in terms of $\Omega_d \subset \R^d$. Given the affine linear
parametrization $\Gamma : \Omega_d \rightarrow \Omega$, \eqref{eqn:mpi-general} can be
stated with respect to $\tilde{c}_d: \Omega_d \rightarrow \R^+ \cup \{0 \}$, $\tilde{c}_d(x)
=c_d(\Gamma(x))$. This convention will be adopted in the analysis
below, by directly writing $h(t)\in I\to \mathbb{R}^d$ etc.

\section{Singular value decay for Sobolev smooth bivariate functions}\label{sec:svd}
Now we describe SV decay for Sobolev smooth bivariate functions,
which is the main technical tool for studying degree of ill-posedness in Section \ref{sec:anal}.

\subsection{Preliminaries on function spaces}
First, we recall Sobolev spaces and Bochner-Sobolev spaces, which are used extensively below.
For any index $\alpha\in \mathbb{N}^d$, $|\alpha|$ is the sum of all components. Given a domain $D
\subset\mathbb{R}^d$ with a Lipschitz continuous boundary, for any $m\in \mathbb{N}$, $1\leq p\leq
\infty$, we follow \cite{AdamsFournier:2003} and define the Sobolev space $W^{m,p}(D)$ by
\begin{equation*}
W^{m,p}(D)=\big\{u\in L^{p}(D): D^{\alpha}u\in L^{p}(D) \text{ for } 0\leq
	|\alpha|\leq m\big\}.
\end{equation*}
It is equipped with the norm
\begin{equation*}
  \|u\|_{W^{m,p}(D)} = \left\{\begin{aligned}
    \Big(\sum\limits_{0\leq |\alpha|\leq m}\|D^{\alpha}u\|_{L^p(D)}^{p}\Big)^{\frac{1}{p}}, &\quad \text{ if }1\leq p<\infty,\\
    \max\limits_{0\leq |\alpha|\leq m}\|D^{\alpha}u\|_{L^\infty(D)} ,&\quad \text{ if } p=\infty.
  \end{aligned}\right.
\end{equation*}
The space $W_{0}^{m,p}(D)$ is the closure of $C^{\infty}_{0}(D)$ in $W^{m,p}(D)$. Its dual space
is denoted by $W^{-m,p'}(D)$, with $\frac{1}{p}+\frac{1}{p'}=1$, i.e., $p'$ is the conjugate exponent of
$p$. Also we use $H^{m}(D)=W^{m,2}(D)$, and $H_0^m(D)=W_0^{m,2}(D)$. The fractional order Sobolev space $W^{s,p}(D)$,
$s\geq0, s\notin \mathbb{N}$, can be defined by interpolation \cite{AdamsFournier:2003}.
It can be equivalently defined by a Sobolev--Slobodecki\v{\i} seminorm $|\cdot|_{W^{s,p}(D)}$. For $0<s<1$, it is defined by
\begin{equation}\label{eqn:SS-seminorm}
   | u |_{W^{s,p}(D)}^p := \int_D\int_D \frac{|u(x)-u(y)|^p}{|x-y|^{d+sp}} \,\d x\d y ,
\end{equation}
and the full norm $\|u\|_{W^{s,p}(D)}=(\|u\|_{L^p(D)}^p + |u|_{W^{s,p}(D)}^p)^\frac{1}{p}$.
For $s>1$, it can be defined similarly.

We state a result on pointwise multiplication on Sobolev spaces \cite[Theorem 7.5]{BehzadanHolst:2017}.
\begin{theorem}\label{thm:product}
Let $D\subset\mathbb{R}^d$, $d=1,2,3$. Assume that $s_i,s$ $(i=1,2)$ are real numbers satisfying
$s_i\geq s \geq 0$ and $s_1+s_2-s>\frac {d}{2}$. Then for some constant $C(s_1,s_2,s,d)$, there holds
\begin{equation*}
  \|uv\|_{H^{s}(D)}\leq C(s_1,s_2,s,d)\|u\|_{H^{s_1}(D)}\|v\|_{H^{s_2}(D)}\quad \forall u\in H^{s_1}(D),v\in H^{s_2}(D).
\end{equation*}
\end{theorem}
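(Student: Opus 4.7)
The plan is to first establish the bound on all of $\mathbb{R}^d$ via the Fourier characterization $\|u\|_{H^s(\mathbb{R}^d)}^2=\int\langle\xi\rangle^{2s}|\hat u(\xi)|^2\,d\xi$ (with $\langle\xi\rangle=(1+|\xi|^2)^{1/2}$), and then to transfer the inequality to the Lipschitz domain $D$ using a Stein-type universal extension operator $E:H^{s_i}(D)\to H^{s_i}(\mathbb{R}^d)$ together with the boundedness of restriction $H^s(\mathbb{R}^d)\to H^s(D)$. This reduces the task to proving the analogous estimate for $u,v\in H^{s_i}(\mathbb{R}^d)$.

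On $\mathbb{R}^d$, since $\widehat{uv}$ equals (up to a constant) $\hat u*\hat v$, the core analytic tool is Peetre's inequality
\[
\langle\xi\rangle^s\leq C_s\bigl(\langle\xi-\eta\rangle^s+\langle\eta\rangle^s\bigr)\qquad(s\geq 0),
\]
which reduces the estimation of $\|\langle\cdot\rangle^s\widehat{uv}\|_{L^2}$ to controlling two symmetric terms, the prototype being
\[
I_1(\xi):=\int_{\mathbb{R}^d}\langle\xi-\eta\rangle^s|\hat u(\xi-\eta)|\,|\hat v(\eta)|\,d\eta,
\]
in $L^2_\xi$. I would factor the integrand as
\[
\bigl[\langle\xi-\eta\rangle^{s_1}|\hat u(\xi-\eta)|\bigr]\bigl[\langle\eta\rangle^{s_2}|\hat v(\eta)|\bigr]\cdot\langle\xi-\eta\rangle^{s-s_1}\langle\eta\rangle^{-s_2},
\]
and apply Cauchy--Schwarz in $\eta$ with $\xi$ fixed to obtain $I_1(\xi)^2\leq A(\xi)\,B(\xi)$, where $A(\xi)=\int\langle\xi-\eta\rangle^{2s_1}|\hat u(\xi-\eta)|^2\langle\eta\rangle^{2s_2}|\hat v(\eta)|^2\,d\eta$ and $B(\xi)=\int\langle\xi-\eta\rangle^{2(s-s_1)}\langle\eta\rangle^{-2s_2}\,d\eta$. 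By Fubini, $\int A(\xi)\,d\xi=\|u\|_{H^{s_1}(\mathbb{R}^d)}^2\|v\|_{H^{s_2}(\mathbb{R}^d)}^2$, while a standard convolution estimate for Japanese-bracket weights yields $\sup_\xi B(\xi)\leq C$ exactly when $2(s_1-s)+2s_2>d$, i.e., when the hypothesis $s_1+s_2-s>d/2$ holds (noting that $s_i\geq s$ guarantees the exponents have the right signs). The companion term arising from $\langle\eta\rangle^s$ in Peetre's inequality is handled by exchanging the roles of $u$ and $v$.

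The main obstacle I expect is the uniform-in-$\xi$ bound on $B(\xi)$, which ultimately rests on the inequality $\int\langle\xi-\eta\rangle^{-\alpha}\langle\eta\rangle^{-\beta}\,d\eta\leq C$ valid whenever $\alpha+\beta>d$ and $\alpha,\beta\geq 0$; the proof requires splitting the domain into $\{|\eta|\leq|\xi|/2\}$ and its complement so that on each piece one of the weights can be replaced by $\langle\xi\rangle$ and the remaining integral converges. A secondary, more technical difficulty is fractional-regularity extension on a merely Lipschitz boundary, which is standard in dimensions $d\leq 3$ but essential, since the Slobodecki\v{\i} double-integral seminorm \eqref{eqn:SS-seminorm} does not interact directly with the convolution structure used above. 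Finally, the strict inequality $s_1+s_2-s>d/2$ is sharp in this argument: equality would produce a logarithmic divergence in $B(\xi)$ and break the estimate.
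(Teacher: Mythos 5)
The paper does not prove this statement at all: it is imported verbatim as \cite[Theorem 7.5]{BehzadanHolst:2017}, so there is no in-paper argument to compare against. Your proposal is a correct, self-contained proof of the special case actually stated — the $L^2$-based scale with nonnegative smoothness — via the classical route: Peetre's inequality $\langle\xi\rangle^{s}\leq 2^{s}(\langle\xi-\eta\rangle^{s}+\langle\eta\rangle^{s})$, Cauchy--Schwarz with the weight split $\langle\xi-\eta\rangle^{s-s_1}\langle\eta\rangle^{-s_2}$, Fubini for the term $\int A(\xi)\,\d\xi$, and the uniform convolution bound $\sup_\xi\int\langle\xi-\eta\rangle^{-\alpha}\langle\eta\rangle^{-\beta}\,\d\eta<\infty$ for $\alpha,\beta\geq 0$, $\alpha+\beta>d$ (for which the symmetric split $\{|\eta|\leq|\xi-\eta|\}$ versus its complement is slightly cleaner than conditioning on $|\xi|/2$, since on each piece one bracket dominates the other pointwise and the integral reduces to $\int\langle\zeta\rangle^{-\alpha-\beta}\,\d\zeta$). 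The hypotheses $s_i\geq s$ and $s_1+s_2-s>d/2$ enter exactly where you say they do, and the transfer to $D$ via a universal extension operator is legitimate for a Lipschitz domain, with the one caveat that Stein's construction is for integer orders and the fractional case is obtained by interpolation (consistent with how the paper defines $H^s(D)$) or by invoking Rychkov's extension; one also needs $(Eu)(Ev)|_D=uv$, which is immediate. What your elementary argument does not buy is the full strength of the cited theorem in Behzadan--Holst, which covers general $W^{s,p}$ scales, negative smoothness, and mixed integrability and therefore requires paraproduct/Littlewood--Paley machinery or lengthy case analysis; but since the paper only ever applies the result with $p=2$ and $s\geq 0$, your proof fully suffices for its role here.
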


Suppose $X$ is a Banach space, with the norm denoted by $\|\cdot\|_X$. Then, for any $p\in\mathbb{N}$,
we denote by $H^p(I;X)$ the Bochner space of functions $v:I\rightarrow X$ such that $v(t)$ and
its weak derivatives (in time) up to order $p$, i.e., $\dot v(t),\ldots, v^{(p)}(t)$, all exist and
belong to $L^2(I;X)$. The norm on $H^p(I;X)$ is defined by
\begin{equation*}
 \|v\|_{H^p(I;X)} ^2 = \sum_{j=0}^p\int_I \|v^{(j)}(t)\|_{X}^2\d t.
\end{equation*}
Then for any $s\geq0$, we can define $H^s(I;X)$ by means of interpolation, and equivalently
using the Sobolev-Slobodecki\v{\i} seminorm \cite{HytonenWeis:2016}. For example, for $s\in(0,1)$,
then the seminorm $|\cdot|_{H^s(I;X)}$ is defined by
\begin{equation*}
    |v|_{H^s(I;X)}^2 = \int_I\int_I\frac{\|v(t_1,\cdot)-v(t_2,\cdot)\|_{X}^2}{|t_1-t_2|^{1+2s}}\d t_1\d t_2,
\end{equation*}
and $\|v\|_{H^s(I;X)}=(\|v\|_{L^2(I;X)}^2+|v|_{H^s(I;X)}^2)^\frac{1}{2}$.
We shall use the case $X=L^2(D)$ extensively. The space $H^s(0,T;L^2(D))$ is
isomorphic to $H^s(0,T)\times L^2(D)$ and $L^2(D;H^s(0,T))$, i.e.,
$H^s(0,T;L^2(D))\simeq H^s(0,T)\times L^2(D)\simeq L^2(D;H^s(0,T))$ (see, e.g.,
\cite[Proposition 1.2.24, p. 25]{HytonenWeis:2016} for the isomorphism
$L^2(I;L^2(D))\simeq L^2(D;L^2(I))$, from which the general case may be derived).
Then by \cite[Proposition 1.3.3, p. 39]{HytonenWeis:2016}, we have
$L^2(D;H^{-s}(I))\simeq H^{-s}(I;L^2(D))$.
We shall use these isomorphisms frequently below.

Now we give two results on the composition operator on $H^s(I;L^\infty(D))$
and $H^s(D;W^{1,\infty}(I))$.
\begin{lemma}\label{lem:compos-Linf}
The following two statements hold.
\begin{itemize}
  \item[$\rm(i)$]For $v\in H^s(I;L^\infty(D))\cap L^\infty(I;L^\infty(D))$ $(s\geq0)$ and $g\in C_B^k(\mathbb{R})$ $(k\geq [s]+1)$, $g\circ v \in H^s(I;L^\infty(D))$.
  \item[$\rm(ii)$] For $v\in H^s(D;W^{1,\infty}(I))\cap L^\infty(D;W^{1,\infty}(I))$ $(s\geq0)$ and $g\in C_B^k(\mathbb{R})$ $(k\geq [s]+2)$, $g\circ v \in H^s(D;W^{1,\infty}(I))$.
\end{itemize}
\end{lemma}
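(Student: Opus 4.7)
The plan is to prove (i) and (ii) in parallel, splitting into the cases $s\in(0,1)$, integer $s\ge 1$, and general $s=m+\sigma$ with $m\in\mathbb{N}$ and $\sigma\in(0,1)$. The boundedness hypotheses on $v$ ensure the range of $v$ lies in a fixed compact subset of $\mathbb{R}$, so every $g^{(j)}(v)$ with $0\le j\le k$ is uniformly bounded by $\|g^{(j)}\|_{L^\infty(\mathbb{R})}$. The two main technical tools are the Sobolev--Slobodecki\v{\i} representation \eqref{eqn:SS-seminorm} of fractional seminorms and Fa\`a di Bruno's formula for iterated derivatives of compositions.

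For part (i) with $s\in(0,1)$, the pointwise Lipschitz bound $|g(a)-g(b)|\le\|g'\|_\infty|a-b|$ at $a=v(t_1,x),\,b=v(t_2,x)$ together with the essential supremum in $x$ gives
\[
\|g\circ v(t_1,\cdot)-g\circ v(t_2,\cdot)\|_{L^\infty(D)}\le\|g'\|_\infty\|v(t_1,\cdot)-v(t_2,\cdot)\|_{L^\infty(D)},
\]
whence $|g\circ v|_{H^s(I;L^\infty(D))}\le\|g'\|_\infty|v|_{H^s(I;L^\infty(D))}$; the $L^2$-part is bounded by $\|g\|_\infty |I|^{1/2}$. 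For integer $s=m\ge 1$, I would expand $\partial_t^m(g\circ v)$ via Fa\`a di Bruno into a sum of terms of the form $g^{(j)}(v)\prod_{\ell}(\partial_t^{k_\ell}v)$ with $j\le m$ and $\sum k_\ell=m$. Each $g^{(j)}(v)$ is bounded in $L^\infty$, the lower-order time derivatives $\partial_t^\ell v$ ($\ell<m$) lie in $L^\infty(I;L^\infty(D))$ by the vector-valued Sobolev embedding $H^m(I;L^\infty(D))\hookrightarrow C^{m-1}(\bar I;L^\infty(D))$, and the sole top-order factor $\partial_t^m v$ is retained in $L^2(I;L^\infty(D))$. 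For general $s=m+\sigma$, I would apply the same Fa\`a di Bruno expansion to $\partial_t^m(g\circ v)$ and then estimate its $H^\sigma(I;L^\infty(D))$-seminorm by the base-case argument, now relying on the Lipschitz continuity of $g^{(m)}$, which is precisely what $k\ge[s]+1$ provides.

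Part (ii) follows the same three-step structure, with $x\in D$ playing the role of the ``outer'' variable and the inner norm being $W^{1,\infty}(I)$. One must control $g\circ v$ and $\partial_t(g\circ v)=g'(v)\dot v$ simultaneously in $H^s(D;L^\infty(I))$. The second factor is the delicate one: for $s\in(0,1)$ I would split
\[
g'(v(x_1,t))\dot v(x_1,t)-g'(v(x_2,t))\dot v(x_2,t)=g'(v(x_1,t))[\dot v(x_1,t)-\dot v(x_2,t)]+[g'(v(x_1,t))-g'(v(x_2,t))]\dot v(x_2,t),
\]
take $L^\infty(I)$-norms, and bound the two pieces by $\|g'\|_\infty|v|_{H^s(D;W^{1,\infty}(I))}$ and $\|g''\|_\infty\|\dot v\|_{L^\infty(D\times I)}|v|_{H^s(D;L^\infty(I))}$, respectively. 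For higher $s$, one then combines this with Fa\`a di Bruno in the spatial variable as in the paragraph above. The main obstacle --- and the reason for the stronger requirement $k\ge[s]+2$ --- is the bookkeeping in this combined Fa\`a di Bruno / fractional expansion: each term is a product of spatial derivatives of $v$ and of $\dot v$ multiplied by some $g^{(j)}(v)$, and one must verify that the ``least-regular'' factor appears only once while all others are absorbed via the $L^\infty$ hypothesis. That precise counting is what forces exactly one extra derivative of $g$ compared to part (i).
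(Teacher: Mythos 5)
Your proposal is correct and follows essentially the same route as the paper: the fractional base case via the Sobolev--Slobodecki\v{\i} seminorm and the mean value theorem, and the add-and-subtract splitting of $g'(v)\dot v$ in part (ii), are exactly the paper's argument. The only real difference is cosmetic: for $s\ge 1$ the paper iterates the chain rule once and absorbs the product $(g'\circ v)\dot v$ using the pointwise-multiplication result of Theorem~\ref{thm:product} (which is the cleanest way to handle the top-order factor $\partial_t^m v$, which need not lie in $L^\infty$ in time when the fractional part of $s$ is small), whereas you expand fully via Fa\`a di Bruno and estimate term by term.
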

\begin{proof}
By the definition \eqref{eqn:SS-seminorm} and the mean value theorem,
for $0<s<1$, since $v\in L^\infty(I;L^\infty(D))$, we have
\begin{align*}
  |g\circ v|_{H^s(I;L^\infty(D))}^2 & = \int_I\int_I\frac{\|g(v(t_1,\cdot))-g(v(t_2,\cdot))\|_{L^\infty(D)}^2}{|t_1-t_2|^{1+2s}}\d t_1\d t_2 \\
     & \leq \int_I\int_I\frac{\sup_{\xi\in\mathbb{R}}|g'(\xi)|^2\|v(t_1,\cdot)-v(t_2,\cdot)\|_{L^\infty(D)}^2}{|t_1-t_2|^{1+2s}}\d t_1\d t_2 \\
     &\leq \sup_{\xi\in\mathbb{R}}|g'(\xi)|^2|v|_{H^s(I;L^\infty(D))}^2<\infty.
\end{align*}
The case $s\geq1$ follows similarly by the chain rule and Theorem \ref{thm:product}.
For example, for $1<s<2$, by chain rule, $\frac{\d}{\d t}(g\circ v)(t)=(g'\circ v)\dot{v}$.
Since $\dot{v} \in H^{s-1}(I;L^\infty(\Omega))$ and $g'\circ v\in H^1(I;L^\infty(\Omega))$,
Theorem \ref{thm:product} implies $\frac{\d}{\d t} (g\circ v)\in H^{s-1}(I;L^\infty(\Omega))$, showing
the assertion for $s\in (1,2)$.

For any $v\in L^\infty(D;W^{1,\infty}(I))$, by the chain rule, mean value theorem and triangle inequality,
direct computation gives that for any $x_1,x_2\in D$
\begin{align*}
 \|g(v(\cdot,x_1))-&g(v(\cdot,x_2))\|_{W^{1,\infty}(I)}
 =  \|g(v(\cdot,x_1))-g(v(\cdot,x_2))\|_{L^\infty(I)} \\
  &\qquad + \|g'(v(\cdot,x_1))\dot{v}(\cdot,x_1) -g'(v(\cdot,x_2))\dot{v}(\cdot,x_2)\|_{L^\infty(I)}\\
  & \leq C\|(v(\cdot,x_1)-v(\cdot,x_2)\|_{W^{1,\infty}(I)} + C\|v(\cdot,x_1)-v(\cdot,x_2)\|_{L^\infty(I)}\|\dot{v}(\cdot,x_1)\|_{L^\infty(I)},
\end{align*}
where the constant $C$ depends only on $\|g\|_{C^2_{B}(\mathbb{R})}$.
By the definition \eqref{eqn:SS-seminorm}, for $0<s<1$, we have
\begin{align*}
  |g\circ v|_{H^s(D;W^{1,\infty}(I))}^2 & = \int_D\int_D\frac{\|g(v(\cdot,x_1))-g(v(\cdot,x_2))\|_{W^{1,\infty}(I)}^2}{|x_1-x_2|^{d+2s}}\d x_1\d x_2 \\
     & \leq C\int_D\int_D\frac{\|v(\cdot,x_1)-v(\cdot,x_2)\|_{W^{1,\infty}(I)}^2}{|x_1-x_2|^{d+2s}}\d x_1\d x_2 \\
     &\quad +C\int_D\int_D\frac{\|v(\cdot,x_1)-v(\cdot,x_2)\|_{L^\infty(I)}^2\|\dot{v}(\cdot,x_1)\|_{L^\infty(I)}^2}{|x_1-x_2|^{d+2s}}\d x_1\d x_2 \\
     &\leq C(|v|_{H^s(D;W^{1,\infty}(I))}^2 + |v|_{H^s(D;L^\infty(I))}^2\|v\|_{L^\infty(D;L^\infty(I))}^2)<\infty.
\end{align*}
This shows the assertion for $0<s< 1$. The case $s\geq1$ follows similarly as part (i).
\end{proof}

\subsection{Singular value decay}

Now we describe our main tool of the analysis, i.e., SV decay estimates for Sobolev smooth bivariate functions.
The study of eigenvalues of integral operators with a kernel function has a rather long history. The monographs
\cite{Pietsch:1987} and \cite{Konig:1986} contain a wealth of relevant results. However, the results in these
works are concerned with two variables defined on the same domain, which do not handle the integral kernel
$\kappa(x,t)$ directly. We shall use the recent result due to Griebel and Li \cite{GriebelLi:2017} (see
\cite[Theorem 3.2]{GriebelLi:2017}), for the nonfiltered model in Section \ref{ssec:nonfilter}; see
\cite[Chapter 2]{Pietsch:1987} for an introduction to the Lorentz sequence space $\ell_{p,w}$.

\begin{theorem}\label{thm:svd-decay-reg}
Suppose that $D\subset\mathbb{R}^d$ satisfies the strong local Lipschitz condition. Let
$\kappa(x,y)\in L^2(\Omega,H^s(D))$, $s\geq 0$. Then the SVs $\sigma_n$ of the
associated integral operator satisfy
\begin{equation*}
  \sigma_n \leq \mathrm{diam}(D)^s C_{\rm em}(d,s)^\frac{1}{2}C_{\rm ext}(D,s)^\frac{1}{2}\|\kappa\|_{L^2(\Omega,H^s(D))}n^{-\frac{1}{2}-\frac{s}{d}},
\end{equation*}
where the constant $C_{\rm ext}(D,s)$ depends only on $D$ and $s$ {\rm(}for Sobolev extension{\rm)}, $C_{\rm em}(d,s)$
is an embedding constant for $\ell_{\frac{d}{d+2s},1}\hookrightarrow\ell_{\frac{d}{d+2s},\infty}$,
and $\mathrm{diam}(D)$ is the diameter of the domain $D$.
\end{theorem}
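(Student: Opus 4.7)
The plan is to bound the singular values by constructing a rank-$N$ approximation of the integral operator from a truncated Fourier-type expansion of $\kappa$ in the second variable, and then convert the resulting Hilbert--Schmidt tail estimate into a pointwise bound on $\sigma_n$ using the monotonicity of singular values. Concretely, I would first apply a Sobolev extension operator pointwise in $x \in \Omega$ to extend $\kappa(x,\cdot)$ from $D$ to an enclosing cube $Q \supset D$ of side comparable to $\diam{D}$, producing $\tilde\kappa \in L^2(\Omega; H^s(Q))$ with
\[
  \|\tilde\kappa\|_{L^2(\Omega;H^s(Q))} \leq C_{\mathrm{ext}}(D,s)^{1/2} \|\kappa\|_{L^2(\Omega;H^s(D))}.
\]
Rescaling $Q$ to the unit cube introduces the explicit factor $\diam{D}^s$ on the $H^s$ seminorm, which is the source of that factor in the claimed bound.

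Next, I would expand $\tilde\kappa$ in a tensor-product Fourier basis $\{\psi_k\}_{k \in \mathbb{Z}^d}$ on the rescaled cube, giving $\tilde\kappa(x,y) = \sum_k \hat\kappa_k(x)\,\psi_k(y)$ with $\hat\kappa_k \in L^2(\Omega)$ and $\sum_k (1+|k|^2)^s \|\hat\kappa_k\|_{L^2(\Omega)}^2 \sim \|\tilde\kappa\|_{L^2(\Omega;H^s)}^2$ by Plancherel. Truncating to the modes with $|k| \leq K$ produces a finite-rank operator of rank $N = \#\{k : |k| \leq K\} \sim K^d$, and the Eckart--Young--Mirsky inequality bounds the Hilbert--Schmidt tail by the truncation error:
\[
  \sum_{j > N} \sigma_j^2 \leq \sum_{|k| > K} \|\hat\kappa_k\|_{L^2(\Omega)}^2 \leq K^{-2s} \|\tilde\kappa\|_{L^2(\Omega;H^s)}^2.
\]
The monotonicity of singular values then gives $N \sigma_{2N}^2 \leq \sum_{j=N+1}^{2N} \sigma_j^2 \leq K^{-2s}\|\tilde\kappa\|^2$, so $\sigma_n \leq C\, n^{-1/2 - s/d}\|\tilde\kappa\|$ for indices $n$ of the form $2N$; the remaining indices are handled by a block argument which, when combined with the preceding steps, is precisely the content of the Lorentz embedding $\ell_{d/(d+2s),1} \hookrightarrow \ell_{d/(d+2s),\infty}$ applied to $(\sigma_n^2)$ with $p = d/(d+2s)$.

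The main obstacle is the careful bookkeeping of constants: ensuring that the Fourier characterization of $H^s(Q)$ is legitimate for non-integer $s$ on a periodic extension of the cube, that the extension constant $C_{\mathrm{ext}}(D,s)$ cleanly separates from the $\diam{D}^s$ scaling under the change of variables, and that the block-to-pointwise passage yields precisely the Lorentz embedding constant $C_{\mathrm{em}}(d,s)$ appearing in the theorem statement rather than a weaker implicit one. The analytical core — tail Hilbert--Schmidt estimate plus monotonicity — is short, so almost all of the remaining work is tracking these constants.
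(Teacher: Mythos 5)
The paper does not prove this theorem itself: it is imported verbatim from Griebel and Li \cite{GriebelLi:2017}, so the relevant comparison is with the proof there. Your skeleton --- Sobolev extension to a cube with the $\diam{D}^s$ factor coming from rescaling, a rank-$N$ approximation of the kernel in the $y$-variable whose $L^2(\Omega\times D)$ error is $O(N^{-s/d})$, the Eckart--Young--Mirsky bound $\sum_{j>N}\sigma_j^2\leq\|\kappa-\kappa_N\|_{L^2(\Omega\times D)}^2$, and the block/monotonicity step $N\sigma_{2N}^2\leq\sum_{j=N+1}^{2N}\sigma_j^2$ packaged as the embedding $\ell_{d/(d+2s),1}\hookrightarrow\ell_{d/(d+2s),\infty}$ --- is exactly the structure of that proof. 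The only substantive difference is the choice of approximation system: Griebel and Li use piecewise polynomial quasi-interpolation on a uniform partition of the extended domain (Bramble--Hilbert locally, then summing over $\sim N$ cells), whereas you use a truncated Fourier series on the cube.

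That substitution is where your argument has a genuine gap. After extension, $\tilde\kappa(x,\cdot)\in H^s(Q)$ is not periodic on $Q$, and for a non-periodic function the weighted coefficient sum $\sum_k(1+|k|^2)^s\|\hat\kappa_k\|_{L^2(\Omega)}^2$ is \emph{not} equivalent to $\|\tilde\kappa\|_{L^2(\Omega;H^s(Q))}^2$ once $s\geq\tfrac12$: already $u(y)=y$ on $(0,1)$ lies in $H^s$ for every $s$ but has Fourier coefficients decaying only like $|k|^{-1}$, so your tail bound $\sum_{|k|>K}\|\hat\kappa_k\|_{L^2(\Omega)}^2\leq K^{-2s}\|\tilde\kappa\|^2$ fails for it. You flag this as a bookkeeping issue, but it actually invalidates the displayed inequality in the regime $s\geq\tfrac12$ that the theorem is chiefly used for. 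The standard repair is to take a Stein extension to $\mathbb{R}^d$, multiply by a smooth cutoff equal to $1$ on $D$ and supported in a strictly larger cube $Q'$, and periodize with respect to $Q'$; the periodization then lies in $H^s_{\mathrm{per}}(Q')$ with comparable norm and your Plancherel argument goes through (at the cost of an extra cutoff-dependent constant, which is why obtaining exactly the constants in the statement is easier with the local polynomial route). With that modification the rest of your argument --- Eckart--Young--Mirsky, the doubling step, and the Lorentz-space reformulation --- is correct.
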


Note that the result in Theorem \ref{thm:svd-decay-reg} requires $s\geq 0$, which does not cover less regular
kernels for the limit model in Section \ref{ssec:limit} below. For general rough kernels, the
spectral theory is largely open \cite{Pietsch:1987,Konig:1986}. Below we analyze the kernel $f:D\times I\to\mathbb{R}$ 
defined by (with $d>1$ being the dimension of the domain $D$)
\begin{equation*} 
  f(x,t):=|Gx-h(t)|^{-\frac d2}.
\end{equation*}

We will make the following assumption.
\begin{assumption}\label{ass:h}
Suppose that the matrix $G\in\mathbb{R}^{d\times d}$ is invertible, and
the trajectory $h(t):I\to \mathbb{R}^d$ satisfies 
\begin{itemize}
\item[$\rm(i)$] There exists $C_h$ such that
$C_h:=\sup_{t\in[0,T]}|\dot{h}(t)|^{-1}<\infty.$
\item[$\rm(ii)$]For any $t\in I$, $G^{-1}h(t)\in D$; and
there exists at most $N_h$ distinct $t\in I$ such that $h(t)=Gx$, for any $x\in D$.
\end{itemize}
\end{assumption}
\begin{remark}
The condition $G^{-1}h(t)\in D$ describes that the FFP moves within the
physical domain $D$, and the domain $D$ is properly covered by the trajectories.
The analysis remains valid if the condition holds
for any open subinterval of $I$. If for all $t\in I$,
$G^{-1}h(t)\notin D$, then $|Gx-h(t)|^r$ belongs to
$C^\infty(\overline{I};L^2(D))$ for smooth trajectories $h(t)$, and the analysis in
Section \ref{ssec:nonfilter} applies directly.
\end{remark}

First, by the proof of Lemma \ref{lem:reg1} below, we have $f(x,t)\in L^2(I;L^p(D))$ for any $p\in(1,2)$.
Now we define the associated integral operator $\mathcal{S}: L^{p'}(D)\rightarrow  L^{2}(I)$ with $f(x,t)$
as its kernel, and its adjoint operator $\mathcal{S}^{*}: L^{2}(I)\rightarrow  L^{p}(D)$, respectively, by
\begin{align*}
(\mathcal{S}v)(t)=\int_{D}f(x,t)v(x){\d}x \quad \mbox{and}\quad
(\mathcal{S}^{*}v)(x)=\int_If(x,t)v(t){\d}t.
\end{align*}
Let $\mathcal{R}: L^{p'}(D)\rightarrow L^{p}(D)$ by $\mathcal{R}=\mathcal{S}^{*}\mathcal{S}.$
By construction, $\mathcal{R}$ is an integral operator with its kernel
$R\in L^p(D)\times L^p(D):D\times D\to \mathbb{R}$ given by
\begin{align}\label{eq:kernelR}
R(x,x')=\int_If(x,t)f(x',t)\d t.
\end{align}

Now we give mapping properties of $\mathcal{S}$.
See \cite[pp. 221--228]{AdamsFournier:2003} for an introduction to Lorentz spaces $L^{p,\infty}(D)$.
\begin{lemma}\label{lem:bdd-kernel}
For $d=2,3$, let Assumption \ref{ass:h} be fulfilled. Then the following statements hold.
\begin{itemize}
\item[$\rm(i)$] For all $q>2$, the operator $\mathcal{S}:L^q(D)\to L^\infty(I)$ is bounded;
\item[$\rm(ii)$] For all $p\in (\frac{2(d-1)}{d}, 2)$ and $q=\frac{2p}{d(2-p)}$, the operator
$\mathcal{S}:L^p(D)\to L^q(I)$ is bounded. In addition, $\mathcal{S}$ is compact from $L^2(D)$ to $L^2(I)$.
%\item[$\rm(iii)$] For all $p\geq 2+\frac{1}{d/2-1}$ and $q<2+\frac{1}{d/2-1}$, the operator $\mathcal{S}^{*}: L^p(I)\to L^q(D)$ is compact.
\end{itemize}
\end{lemma}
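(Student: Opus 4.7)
The plan is to prove part (i) by a direct Hölder estimate that is uniform in $t$, to prove part (ii) by identifying $\mathcal{S}v(t)$ with the Riesz potential of order $d/2$ of $v$ (after rescaling by $G$) evaluated along the trajectory $h$, and then chaining the three standard tools (HLS/Bessel lift, Sobolev trace onto a curve, one-dimensional Sobolev embedding), and finally to deduce the $L^2\to L^2$ compactness by upgrading the last step to a Rellich--Kondrachov compact embedding. For (i), fix $q>2$ with conjugate exponent $q'<2$, so that Hölder gives
\begin{equation*}
|(\mathcal{S}v)(t)|\leq \|f(\cdot,t)\|_{L^{q'}(D)}\,\|v\|_{L^q(D)};
\end{equation*}
the substitution $y=Gx-h(t)$ together with the boundedness of $D$ and $h(I)$ bounds $\|f(\cdot,t)\|_{L^{q'}(D)}^{q'}$ above by $|\det G|^{-1}\int_{B_R(0)}|y|^{-dq'/2}\,\d y$ for a uniform radius $R$, which is finite exactly because $dq'/2<d$ and independent of $t$.

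For (ii), substituting $y=Gx$ and extending $v$ by zero off $D$ rewrites
\begin{equation*}
(\mathcal{S}v)(t)=|\det G|^{-1}(K*\tilde v)(h(t)),\qquad K(y)=|y|^{-d/2},\quad \tilde v=v\circ G^{-1}\cdot \chi_{G(D)},
\end{equation*}
which, up to a dimensional constant, is the Riesz potential $I_{d/2}\tilde v$ restricted to the curve $h(I)$. I would then chain three bounded maps. First, because $1<p<2$ and $\tilde v$ is compactly supported, classical regularity of Riesz potentials yields $I_{d/2}\tilde v\in H^{d/2,p}(\mathbb{R}^d)$ after localization by a smooth cutoff around a bounded neighborhood of $h(I)$. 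Second, the Sobolev trace onto the $C^1$-regular curve $h(I)$ (regularity coming from $|\dot h|\geq C_h^{-1}$ in Assumption \ref{ass:h}(i)) is a bounded operator $H^{d/2,p}(\mathbb{R}^d)\to H^{s,p}(I)$ with $s:=d/2-(d-1)/p$, available exactly when $s>0$, i.e.\ $p>2(d-1)/d$. Third, the one-dimensional Sobolev embedding $H^{s,p}(I)\hookrightarrow L^q(I)$ holds for $sp<1$ with $1/q=1/p-s=d(2-p)/(2p)$, giving precisely $q=2p/(d(2-p))$; the constraint $sp<1$ is just $p<2$. The two endpoint conditions of the lemma therefore correspond exactly to the regularity ranges of the trace and the embedding steps.

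The main obstacle is the trace step: to make it rigorous I would localize to a tubular neighborhood of $h(I)$ via a finite partition of unity, flatten $h$ on each patch into a straight segment by a $C^1$ coordinate change (using $|\dot h|\geq C_h^{-1}$), and iterate the standard hyperplane-trace theorem $d-1$ times; Assumption \ref{ass:h}(ii) caps the multiplicity of $h$ by $N_h$, so that the pullback of the restricted norm from $h(I)$ to the parameter interval $I$ remains finite. For the $L^2\to L^2$ compactness, pick any $p\in(2(d-1)/d,2)$ with companion exponent $q=2p/(d(2-p))>2$, and upgrade the third step via Rellich--Kondrachov to the compact embedding $H^{s,p}(I)\hookrightarrow L^{q_*}(I)$ for any $q_*<q$; choosing $q_*>2$, which is possible because $q>2$ strictly, makes $\mathcal{S}:L^p(D)\to L^{q_*}(I)$ compact. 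The continuous inclusions $L^2(D)\hookrightarrow L^p(D)$ and $L^{q_*}(I)\hookrightarrow L^2(I)$, available on the bounded domains $D$ and $I$ since $p\le 2\le q_*$, then render $\mathcal{S}:L^2(D)\to L^2(I)$ compact as a composition with a compact factor.
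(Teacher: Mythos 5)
Your overall architecture is a genuinely different route from the paper's. The paper proves (ii) by establishing the restricted weak-type estimate $\|\mathcal{S}\chi_A\|_{L^{q,\infty}(I)}\leq C|A|^{1/p}$ for characteristic functions (via a symmetrization/rearrangement computation in $x$ and a separate rearrangement bound for $t\mapsto|Gx-h(t)|^{-1}$ using the mean value theorem and the multiplicity bound $N_h$), and then invokes the Stein--Weiss interpolation theorem; compactness then comes from a Krasnoselskii-type theorem. You instead chain Riesz-potential regularity, a trace onto the curve $h(I)$, and the one-dimensional Sobolev embedding. Your part (i) is correct and essentially the paper's argument written out by hand, and your exponent bookkeeping in (ii) is exactly right: $s=\tfrac d2-\tfrac{d-1}{p}>0$ reproduces the endpoint $p>\tfrac{2(d-1)}{d}$, and $\tfrac1q=\tfrac1p-s$ reproduces $q=\tfrac{2p}{d(2-p)}$, which is a good sanity check that the lemma's exponents are the natural ones. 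The compactness upgrade via Rellich--Kondrachov on the interval is also sound given the rest.

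The genuine gap is the trace step, which you correctly identify as the main obstacle but do not close. Restricting $H^{d/2,p}(\mathbb{R}^d)$ to a curve is a codimension-$(d-1)$ trace at fractional smoothness $d/2$, and for $d=3$ this means working in $H^{3/2,p}$. A $C^1$ coordinate change, which is all that $|\dot h|\geq C_h^{-1}$ can possibly give you, does \emph{not} preserve $H^{3/2,p}$ regularity; flattening the curve so as to iterate the hyperplane trace theorem requires the straightening diffeomorphism, hence $h$ itself, to be at least $C^{1,1}$ (in practice $C^2$). Assumption \ref{ass:h} guarantees only that $\dot h$ exists with $\sup_t|\dot h(t)|^{-1}<\infty$ and the finite-multiplicity property; it does not even assert continuity of $\dot h$, let alone a second derivative, and it gives no uniform local injectivity radius for the (heavily self-intersecting) Lissajous-type trajectories. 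There is also a secondary technical point you gloss over: the trace of $H^{s,p}$ onto a submanifold lands in a Besov space $B^{s-(d-1)/p}_{p,p}$ rather than $H^{s-(d-1)/p,p}$ for $p\neq 2$, which is harmless for the subsequent embedding into $L^q$ but needs to be said. The paper's rearrangement-plus-Stein--Weiss argument is designed precisely to avoid all of this: it uses only the mean value theorem and the multiplicity bound, i.e., exactly the content of Assumption \ref{ass:h}. To salvage your route you would either need to strengthen the hypotheses on $h$ (which would weaken the lemma) or replace the trace step by a direct estimate along the trajectory --- at which point you are essentially reconstructing the paper's rearrangement argument.
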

\begin{proof}
By the proof of Lemma \ref{lem:reg1} below, $v(t):=\|f(\cdot,t)\|_{L^{2-\epsilon}(D)}\in L^{\infty}(I)$ for
any $\epsilon>0$. Together with \cite[Theorem 6.1, p. 99]{Krasnoselskii:1976}, it implies
that $\mathcal{S}$ is bounded from $L^q(D)$ to $L^{\infty}(I)$ for all $q>{2}$.

The proof of assertion (ii) is inspired by the proof of Theorem 8.10 of \cite[p. 165]{Krasnoselskii:1976}.
It relies on Stein-Weiss interpolation theorem (see, e.g., \cite[Theorem 8.2, p. 150]{Krasnoselskii:1976} and
\cite[Chapter 5]{MR0304972}). First, we claim that there is $C>0$ such that for
any $p\in (\frac{2(d-1)}{d}, 2)$ and $q=\frac{2p}{(2-p)d}$, there holds
\begin{align}\label{eq:stein-weiss}
\|\mathcal{S}\chi_{A}\|_{L^{q,\infty}(I)}\leq C|A|^\frac{1}{p}
\end{align}
for all measurable subsets $A\subset D$ with finite measure, where $\chi_A$ is the characteristic
function of $A$. If the estimate \eqref{eq:stein-weiss} holds, then by Stein-Weiss interpolation theorem,
$\mathcal{S}$ is bounded from $L^p(D)$ to $L^q(I)$ for all $p\in (\frac{2(d-1)}{d}, 2)$ and
$q=\frac{2p}{d(2-p)}$. Then Theorem 5.4 of \cite[p. 83]{Krasnoselskii:1976} implies that $\mathcal{S}$
is compact from $L^2(D)$ to $L^2(I)$. Hence, it suffices to prove the estimate \eqref{eq:stein-weiss}.

First, by H\"{o}lder's inequality with an exponent $\gamma\in (\frac{1}{q},\frac{1}{p})$, we obtain
\begin{align}
\mathcal{S}\chi_{A}(t)&=\int_{A}|Gx-h(t)|^{-\frac{d}{2}}\d x=\int_{A}|Gx-h(t)|^{-(\frac{d}{2}-\frac{1}{q})}|Gx-h(t)|^{-\frac{1}{q}}\d x\nonumber\\
&\leq \Big(\int_{A}|Gx-h(t)|^{-(\frac{d}{2}-\frac{1}{q})(1-\gamma)^{-1}}\d x\Big)^{1-\gamma}\Big(\int_{A}|Gx-h(t)|^{-\frac{1}{q\gamma}}\d x\Big)^{\gamma}.\label{eq:3344}
\end{align}
Let $s:=(\frac{d}{2}-\frac{1}{q})(1-\gamma)^{-1}$, and let $B(G^{-1}h(t),\rho):=\{x\in \mathbb{R}^{d}:
|x-G^{-1}h(t)|\leq \rho\}$ be the ball centered at $G^{-1}h(t)$ with a radius $\rho$ satisfying $|B(G^{-1}
h(t),\rho)|=|A|$, which implies $|A|=|\mathbb{S}^{d-1}|\rho^d$, where $|\mathbb{S}^{d-1}|$ denotes
the volume of the unit sphere in $\mathbb{R}^d$. Then by equation (8.33) of
\cite[p. 154]{Krasnoselskii:1976}, we have
\begin{align*}
 &\int_{A}|x-G^{-1}h(t)|^{-s}\d x-\int_{B(G^{-1}h(t),\rho)}|x-G^{-1}h(t)|^{-s}\d x\\
=&\int_{A\backslash B(G^{-1}h(t),\rho)}|x-G^{-1}h(t)|^{-s}\d x-\int_{B(G^{-1}h(t),\rho)\backslash A}|x-G^{-1}h(t)|^{-s}\d x\\
\leq &|A\backslash B(G^{-1}h(t),\rho)|\rho^{-s}-|B(G^{-1}h(t),\rho)\backslash A| \rho^{-s}=0.
\end{align*}
Consequently,
\begin{align}\label{eq:2233}
\int_{A}|x-G^{-1}h(t)|^{-s}\d x\leq\int_{B(G^{-1}h(t),\rho)}|x-G^{-1}h(t)|^{-s}\d x.
\end{align}
By the choice of $\gamma$, $s:=(\frac{d}{2}-\frac{1}{q})(1-\gamma)^{-1}=d\frac{p-1}{p(1-\gamma)}<d$.
Thus, in view of the inequality $|Gx-h(t)|\geq
\|G^{-1}\|^{-1}|x-G^{-1}h(t)|$, together with \eqref{eq:2233}, changing to polar coordinates leads to
\begin{align*}
  \int_{A}|Gx-h(t)|^{-s}\d x\leq& \|G^{-1}\|^{s}\int_{B(G^{-1}h(t),\rho)}|x-G^{-1}h(t)|^{-s}\d x\\
\leq& \|G^{-1}\|^{s}|\mathbb{S}^{d-1}|\int_0^{\rho} r^{-s} r^{d-1}\d r\\
\leq & \|G^{-1}\|^{s}|\mathbb{S}^{d-1}|\frac{p(1-\gamma)}{d(1-p\gamma)}\rho^{d\frac{1-p\gamma}{p(1-\gamma)}}.
\end{align*}
Therefore, with the identity $|A|=|\mathbb{S}^{d-1}|\rho^d$, we arrive at
\begin{align*}
\int_{A}|Gx-h(t)|^{-s}\d x\leq C(s,d,\gamma,p)|A|^{\frac{1-p\gamma}{p(1-\gamma)}}.
\end{align*}
with a constant $C(s,d,\gamma,p)=\|G^{-1}\|^{s}|\mathbb{S}^{d-1}|^{\frac{p-1}{p(1-\gamma)}}\frac{p(1-\gamma)}{d(1-p\gamma)}$. This and \eqref{eq:3344} imply
\begin{align}\label{eq:7788}
\|\mathcal{S}\chi_{A}\|_{L^{q,\infty}(I)}\leq %\|G^{-1}\|^{\frac{d(p-1)}{p}}|\mathbb{S}^{d-1}|^{\frac{p-1}{p(1-\gamma)}}\Big(\frac{p(1-\gamma)}{d(1-p\gamma)}\Big)^{\gamma}
C(s,d,\gamma,p)^{1-\gamma}|A|^{\frac{1}{p}-\gamma}\Big\|\Big(\int_{A}|Gx-h(t)|^{-\frac{1}{q\gamma}}\d x\Big)^{\gamma}\Big\|_{L^{q,\infty}(I)}.
\end{align}
It remains to estimate the last term in \eqref{eq:7788}. Actually, by definition,
\begin{align}
\Big\|\Big(\int_{A}|Gx-h(t)|^{-\frac{1}{q\gamma}}\d x\Big)^{\gamma}\Big\|_{L^{q,\infty}(I)}&=\Big(\Big\|\int_{A}|Gx-h(t)|^{-\frac{1}{q\gamma}}\d x\Big\|_{L^{q\gamma,\infty}(I)}\Big)^{\gamma}\nonumber\\
&\leq \Big(\int_{A}\Big\||Gx-h(t)|^{-\frac{1}{q\gamma}}\Big\|_{L^{q\gamma,\infty}(I)}\d x\Big)^{\gamma}\nonumber\\
&=\Big(\int_{A}\Big\||Gx-h(t)|^{-1}\Big\|_{L^{1,\infty}(I)}^{\frac{1}{q\gamma}}\d x\Big)^{\gamma}.\label{eq:8899}
\end{align}
Next we fix any $x\in A$, and estimate $\big\||Gx-h(t)|^{-1}\big\|_{L^{1,\infty}(I)}$. Let $g(t):=|Gx-h(t)|^{-1}$.
Then, under Assumption \ref{ass:h}, the nonincreasing rearrangement function $g^*(\tau)$ for $\tau\geq 0$ can
be bounded by
\begin{align}\label{eq:111222}
 g^*(\tau)\leq 2C_hN_h\tau^{-1},
\end{align}
where the rearrangement function $g^*(\tau)$ is defined by
\begin{align*}
g^*(\tau)&=\inf\{c:\big|\{t:|g(t)|\geq c\}\big|\leq \tau\}\\
 &=\inf\{c:\big|\{t:|h(t)-Gx|\leq c^{-1}\}\big|\leq \tau\},
\end{align*}
by slightly abusing the notation $|\cdot|$ for the Lebesgue measure of a set. Indeed, we have the trivial inclusion
$\{t:|h(t)-Gx|\leq c^{-1}\}\subset \cup_{j=1}^{N_x}\{t\in [\max(0,t_{j-1}),\min(t_{j+1},T)]: |h(t)-h(t_j)|\leq c^{-1}\}$,
where the time instances $t_j$ satisfy $h(t_j)=Gx$, for $j=1,\ldots, N_x\leq N_h$, under Assumption \ref{ass:h}.
Further, for any $t\in [t_{j-1},t_{j}]$, by the mean value theorem, there exists some $\xi_j\in [t_{j-1},t_{j}]$ such that
$|h(t)-h(t_j)|=|\dot{h}(\xi_j)(t-t_j)|$, then the assertion \eqref{eq:111222} follows. Consequently
\begin{align*}
\big\||Gx-h(t)|^{-1}\big\|_{L^{1,\infty}(I)}:=\sup\limits_{\tau>0}\tau g^{*}(\tau)\leq 2C_hN_h.
\end{align*}
Now plugging this into \eqref{eq:8899} yields
\begin{align*}
 \Big\|\Big(\int_{A}|Gx-h(t)|^{-\frac{1}{q\gamma}}\d x\Big)^{\gamma}\Big\|_{L^{q,\infty}(I)}\leq (2C_hN_h)^\frac{1}{q}|A|^{\gamma},
\end{align*}
which, together with \eqref{eq:7788}, directly implies
\begin{align*}
\|\mathcal{S}\chi_{A}\|_{L^{q,\infty}(I)}\leq (2C_hN_h)^\frac{1}{q}C(s,d,\gamma,p)^{1-\gamma}|A|^\frac{1}{p}.
\end{align*}
Upon letting $\gamma=\frac{p+q}{2pq}$, we obtain the desired estimate.
This completes the proof of the lemma.
\end{proof}

The next lemma shows that the operator $\mathcal{R}:L^q(D)\to L^2(D)$ is compact, for any $q>\frac{2(d-1)}{d}$.
\begin{lemma}\label{lemma:compact}
Under the conditions of Lemma \ref{lem:bdd-kernel}, for any $q>\min(1,\frac{2(d-1)}{d})$, $\mathcal{R}$
extends to a compact operator from $L^q(D)$ to $L^2(D)$. Especially, $\mathcal{R}$ is compact on $L^2(D)$.
\end{lemma}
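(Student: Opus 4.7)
The plan is to factor $\mathcal{R}=\mathcal{S}^*\mathcal{S}$ and combine the mapping properties of $\mathcal{S}$ from Lemma \ref{lem:bdd-kernel} with the compactness of $\mathcal{S}:L^2(D)\to L^2(I)$ already obtained there. By Schauder's theorem the Hilbert-space adjoint $\mathcal{S}^*:L^2(I)\to L^2(D)$ is likewise compact, which gives compactness of $\mathcal{R}$ on $L^2(D)$ for free. For $q\geq 2$, the continuous inclusion $L^q(D)\hookrightarrow L^2(D)$ (valid because $D$ has finite measure) immediately reduces the claim to composing a bounded operator with a compact one.

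For the intermediate range $q\in[\tfrac{2d}{d+1},2)$, the plan is to insert a Lebesgue space $L^r(I)$ with $r\geq 2$ so that
\[
\mathcal{R}:L^q(D)\xrightarrow{\mathcal{S}}L^r(I)\hookrightarrow L^2(I)\xrightarrow{\mathcal{S}^*}L^2(D)
\]
is a composition whose last factor is compact. Applying Lemma \ref{lem:bdd-kernel}(ii) with $p=q$ yields boundedness of the first arrow with $r=\tfrac{2q}{d(2-q)}$, and the condition $r\geq 2$ is exactly $q\geq \tfrac{2d}{d+1}$, so this case closes cleanly.

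For the remaining borderline window $q\in(\min(1,\tfrac{2(d-1)}{d}),\tfrac{2d}{d+1})$, the image of $\mathcal{S}$ no longer lies in $L^2(I)$ but only in the weak-type Lorentz space $L^{r,\infty}(I)$ with $r<2$, as one sees from the key Stein--Weiss estimate \eqref{eq:stein-weiss} in the proof of Lemma \ref{lem:bdd-kernel}. Here the plan is to dualize that bound to obtain $\mathcal{S}^*:L^{r',1}(I)\to L^{q'}(D)$, use the H\"{o}lder-type duality pairing between $L^{r,\infty}(I)$ and $L^{r',1}(I)$ to first get boundedness of $\mathcal{R}:L^q(D)\to L^{q'}(D)\hookrightarrow L^2(D)$ (the last inclusion because $q'\geq 2$), and then upgrade to compactness by a truncation argument: set $f_\epsilon(x,t):=f(x,t)\chi_{\{|Gx-h(t)|\geq \epsilon\}}$, so the induced operators $\mathcal{R}_\epsilon=\mathcal{S}_\epsilon^*\mathcal{S}_\epsilon$ have bounded kernels (hence are Hilbert--Schmidt and compact), and verify $\|\mathcal{R}-\mathcal{R}_\epsilon\|_{L^q(D)\to L^2(D)}\to 0$ as $\epsilon\to 0^+$ via Lorentz-norm control of $f-f_\epsilon$.

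The main obstacle is precisely this small-$q$ regime: the singularity of $f(x,t)=|Gx-h(t)|^{-d/2}$ along the trajectory $\{Gx=h(t)\}$ is of critical order, which is why the Stein--Weiss/Lorentz machinery had to be introduced for Lemma \ref{lem:bdd-kernel} in the first place. Reusing it in Lorentz-duality form and closing out compactness through the truncation--limit step is the technically delicate point; once it is done, the three ranges glue together to give compactness of $\mathcal{R}:L^q(D)\to L^2(D)$ for every $q>\min(1,\tfrac{2(d-1)}{d})$.
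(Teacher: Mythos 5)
Your treatment of the range $q\ge \tfrac{2d}{d+1}$ is correct and is essentially the paper's (one-line) ``duality argument'' made explicit: factor $\mathcal{R}=\mathcal{S}^*\mathcal{S}$ through an intermediate $L^r(I)$ with $r\ge 2$ via Lemma \ref{lem:bdd-kernel}(ii), embed into $L^2(I)$, and let the compact adjoint $\mathcal{S}^*:L^2(I)\to L^2(D)$ finish the job. Since the only instances used later in the paper are $q=2$ and $q=2+\epsilon$, this part already covers what matters downstream.

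The window $q\in(\min(1,\tfrac{2(d-1)}{d}),\tfrac{2d}{d+1})$ is where your argument has genuine gaps. First, the proposed composition does not typecheck: you map $L^q(D)$ into $L^{r,\infty}(I)$ with $r<2$ and then want to apply $\mathcal{S}^*:L^{r',1}(I)\to L^{q'}(D)$, but $r'>r$, so on the finite-measure interval the inclusion goes the wrong way ($L^{r',1}(I)\subset L^{r,\infty}(I)$), and the pairing of two functions merely in $L^{r,\infty}(I)$ with $r<2$ is uncontrolled (two copies of $t^{-1/r}$ multiply to a nonintegrable $t^{-2/r}$); the claimed bound $\mathcal{R}:L^q(D)\to L^{q'}(D)$ is not what this factorization yields. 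The repair stays inside Lemma \ref{lem:bdd-kernel}(ii): apply it once with $p=q$ to get $\mathcal{S}:L^q(D)\to L^r(I)$, $r=\tfrac{2q}{d(2-q)}$, and a second time with $p=\tfrac{qd}{q(1+d)-d}$ (which lies in $(\tfrac{2(d-1)}{d},2)$ throughout this window), whose dual statement gives exactly $\mathcal{S}^*:L^{r}(I)\to L^{p'}(D)$ with $p'=\tfrac{qd}{d-q}>2$; composing yields $\mathcal{R}:L^q(D)\to L^{p'}(D)\hookrightarrow L^2(D)$ bounded. Second, your compactness upgrade by truncating at $|Gx-h(t)|\ge\epsilon$ is only sketched and is suspect at the critical exponents: for scaling-critical pairs the operator norm of a kernel truncated to a small neighborhood of its singularity need not tend to zero (for $|y|^{-d/2}\chi_{\{|y|<\epsilon\}}$ at the critical $L^p\to L^q$ pair it is $\epsilon$-independent by scaling), so $\|\mathcal{R}-\mathcal{R}_\epsilon\|\to 0$ requires a quantitative gain you have not supplied. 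A cleaner route is one-sided interpolation of compactness (the Krasnosel'ski\u{\i}-type theorem already invoked in the proof of Lemma \ref{lem:bdd-kernel}): $\mathcal{R}:L^2(D)\to L^2(D)$ is compact, $\mathcal{R}:L^{q_1}(D)\to L^2(D)$ is bounded for $q_1$ near the lower endpoint, and the target is fixed, so compactness holds for all intermediate $q$. Finally, note that for $d=3$ the stated threshold $\min(1,\tfrac{2(d-1)}{d})=1$ admits $q\in(1,\tfrac43]$, where Lemma \ref{lem:bdd-kernel} provides no mapping property at all; the range any proof along these lines can reach is $q>\tfrac{2(d-1)}{d}$, which is what the sentence introducing the lemma actually states.
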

\begin{proof}
This follows directly from Lemma \ref{lem:bdd-kernel} and a duality argument.
\end{proof}

By Lemma \ref{lemma:compact}, the operator $\mathcal{R}$ is nonnegative, compact and self-adjoint on $L^2(D)$.
By spectral theory for compact operators \cite{yosida78}, it has
at most countably many discrete eigenvalues, with zero being the only accumulation point, and each nonzero
eigenvalue has only finite multiplicity. Let $\{\lambda_n\}_{n=1}^{\infty}$ be the sequence of eigenvalues (with
multiplicity counted) associated to $\mathcal{R}$, which are ordered nonincreasingly, and $\{\phi_n\}_{n=1}^\infty$
the corresponding eigenfunctions (orthonormal in $L^2(D)$). Moreover, spectral theory implies
\begin{align}\label{eq:spectral}
\forall v\in L^2(D): \quad \mathcal{R}v=\sum\limits_{n=1}^{\infty}\lambda_n(v,\phi_n)\phi_n,
\end{align}
with $(\cdot,\cdot)$ being the $L^2(D)$ inner product. Let $q^*=\infty$ for $d=2$, and $q^*=4$ for $d=3$.
Then by Lemma \ref{lem:bdd-kernel} and \cite[Theorem 5.4, p. 83]{Krasnoselskii:1976},
the eigenfunctions $\{\phi_n\}_{n=1}^{\infty}$ have the following summability:
For any $q<q^*$ and any $n\in \mathbb{N}_{+}$, $\phi_n\in L^q(D)$.
This and the spectral decomposition \eqref{eq:spectral} imply that
the spectrum of the operator $\mathcal{R}$ will not change if its domain is restricted to
$L^{2+\epsilon}(D)$ for any $0<\epsilon<1$.

Now we extend Theorem \ref{thm:svd-decay-reg} to the kernel $|Gx-h(t)|^{-\frac d2} $.
This result will be used in Section \ref{ssec:limit}. We need a few concepts from
spectral theory in Banach spaces \cite{Pietsch:1987}.
Given two Banach spaces $E$ and $F$, the $n$-th approximation number $a_n(W)$ and the Weyl number
$x_n(W)$ of an operator $W\in \mathcal{B}(E,F)$ (i.e., the set of all bounded linear operators from $E$ to $F$) are defined by
\begin{align*}
a_n(W):&=\inf\{\|W-L\|_{\mathcal{B}(E,F)}: L\in \mathfrak{F}(E,F),\text{ rank}(L)<n\},
\end{align*}
and
\begin{align*}
x_n(W):&=\sup\{a_n(WX): X\in \mathcal{B}(\ell_2,E),\|X\|_{\mathcal{B}(\ell_2,E)}\leq 1\},
\end{align*}
respectively, where $\mathfrak{F}(E,F)$ denotes the set of the finite rank operators and $WX$ is the
product of the two operators $W$ and $X$. Furthermore, the following multiplicative property on Weyl
numbers holds \cite[Sections 2.4 and 3.6.2]{Pietsch:1987}:
\begin{proposition}\label{prop:Weyl}
For all  $n\in\mathbb{N}_{+}$, $X\in \mathcal{B}(E_0,E)$,
$W\in \mathcal{B}(E,F)$ and $Y\in\mathcal{B}(F,F_0)$, there holds
\begin{align*}
x_n(YWX)\leq \|Y\|_{\mathcal{B}(F,F_0)}x_n(W) \|X\|_{\mathcal{B}(E_0,E)}.
\end{align*}
\end{proposition}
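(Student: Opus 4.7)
The plan is to reduce the multiplicativity for Weyl numbers to the (easier) multiplicativity for approximation numbers, which follows directly from the definition. First I would record the three elementary properties of the approximation numbers $a_n(\cdot)$: for any bounded operators $A,B,T$ and any scalar $\lambda$,
\begin{equation*}
a_n(ATB)\leq \|A\|\,a_n(T)\,\|B\|,\qquad a_n(\lambda T)=|\lambda|a_n(T),\qquad a_n(T)\leq \|T\|.
\end{equation*}
The first of these is the key one, and it is immediate: given any finite-rank operator $L$ with $\mathrm{rank}(L)<n$ that nearly attains the infimum defining $a_n(T)$, the operator $ALB$ has rank $<n$ and satisfies $\|ATB-ALB\|\leq \|A\|\|T-L\|\|B\|$; passing to the infimum gives the claim.

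Next I would unwind the definition of $x_n$ applied to the composition $YWX:E_0\to F_0$:
\begin{equation*}
x_n(YWX)=\sup\bigl\{a_n(YWXZ):\ Z\in\mathcal{B}(\ell_2,E_0),\ \|Z\|\leq 1\bigr\}.
\end{equation*}
The idea is that the operator $XZ:\ell_2\to E$ is an admissible test operator for the definition of $x_n(W)$, up to the norm of $X$. If $X=0$ the inequality is trivial, so assume $\|X\|>0$ and set $\widetilde Z:=XZ/\|X\|\in \mathcal{B}(\ell_2,E)$. Then $\|\widetilde Z\|\leq \|Z\|\leq 1$, hence by the definition of $x_n(W)$ we have $a_n(W\widetilde Z)\leq x_n(W)$.

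Combining these with the elementary properties of $a_n$ gives, for every admissible $Z$,
\begin{equation*}
a_n(YWXZ)\leq \|Y\|\,a_n(W(XZ))=\|Y\|\|X\|\,a_n(W\widetilde Z)\leq \|Y\|\|X\|\,x_n(W),
\end{equation*}
and taking the supremum over $Z$ with $\|Z\|\leq 1$ yields the desired inequality $x_n(YWX)\leq \|Y\|\,x_n(W)\,\|X\|$. There is no real obstacle here beyond being careful about which Banach space each test operator maps into; the only mildly subtle point is the rescaling trick $\widetilde Z=XZ/\|X\|$, which is needed because the definition of $x_n(W)$ only tests against unit-ball operators from $\ell_2$ into the domain $E$ of $W$, whereas the definition of $x_n(YWX)$ tests against unit-ball operators into $E_0$.
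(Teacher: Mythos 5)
Your argument is correct: the reduction to the multiplicativity and homogeneity of the approximation numbers, together with the rescaling $\widetilde Z = XZ/\|X\|$ to turn $XZ$ into an admissible test operator for $x_n(W)$, is exactly the standard proof of this property, which the paper itself does not spell out but simply cites from Pietsch's monograph. No gaps; the only point worth watching is the trivial case $X=0$, which you handle.
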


\begin{theorem}\label{thm:svd-decay}
Let Assumption \ref{ass:h} hold, and $f(x,t):=|Gx-h(t)|^{-\frac d2}$ with $d=2,3$.
Then $\lambda_n \leq C n^{-\frac12+\epsilon}$ as $n\to\infty$ for any $\epsilon>0$.
\end{theorem}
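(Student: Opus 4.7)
The strategy is to extend Theorem \ref{thm:svd-decay-reg} beyond its Sobolev-regular setting by combining a two-scale decomposition of the singular kernel $f(x,t)=|Gx-h(t)|^{-d/2}$ with the $L^p(D)\to L^q(I)$ mapping property of $\mathcal{S}$ established in Lemma \ref{lem:bdd-kernel} and the multiplicativity of Weyl numbers from Proposition \ref{prop:Weyl}. Fix a cutoff $\psi\in C^\infty([0,\infty))$ with $\psi\equiv 0$ on $[0,1/2]$ and $\psi\equiv 1$ on $[1,\infty)$, and for each $\delta>0$ decompose $f=f_\delta+g_\delta$, where $f_\delta(x,t):=\psi(|Gx-h(t)|/\delta)f(x,t)$ is supported away from the singular locus $\{Gx=h(t)\}$ and $g_\delta$ is supported in the $\delta$-neighbourhood of it. This induces a splitting $\mathcal{S}=\mathcal{S}_\delta+\mathcal{T}_\delta$.

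For $\mathcal{S}_\delta$, the chain rule and product rule yield the pointwise bound $|D^\alpha f_\delta(x,t)|\leq C_\alpha\,\max(\delta,|Gx-h(t)|)^{-d/2-|\alpha|}$ on its support. Integrating over $D$ via polar coordinates (after the change of variables $y=Gx-h(t)$) gives $\|f_\delta\|_{L^2(I;H^s(D))}\leq C_s\,\delta^{-s}$ for every integer $s\geq 1$, whence Theorem \ref{thm:svd-decay-reg} yields $\sigma_n(\mathcal{S}_\delta)\leq C\delta^{-s}\,n^{-1/2-s/d}$. For $\mathcal{T}_\delta$, whose kernel is supported on $\{|Gx-h(t)|\leq\delta\}$, I would rerun the Stein--Weiss argument of Lemma \ref{lem:bdd-kernel} with $\int_A|Gx-h(t)|^{-s}\,\d x$ replaced by its restriction to $A\cap\{|Gx-h(t)|\leq\delta\}$, which is bounded by $C\min(|A|,c\delta^d)^{(1-p\gamma)/(p(1-\gamma))}$. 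Tracking this through the interpolation should deliver $\|\mathcal{T}_\delta\|_{L^p(D)\to L^q(I)}\leq C\delta^{\alpha}$ for some $\alpha=\alpha(p,q,d)>0$, valid for $p\in(2(d-1)/d,2)$ with $q=2p/(d(2-p))$. Factoring $\mathcal{T}_\delta:L^2(D)\to L^2(I)$ through the continuous embeddings $L^2(D)\hookrightarrow L^p(D)$ and $L^q(I)\hookrightarrow L^2(I)$ (both bounded since $D$ and $I$ have finite measure) and invoking Proposition \ref{prop:Weyl} then gives $\|\mathcal{T}_\delta\|_{L^2\to L^2}\leq C'\delta^{\alpha}$.

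Combining via $\sigma_n(\mathcal{S})\leq\sigma_n(\mathcal{S}_\delta)+\|\mathcal{T}_\delta\|_{L^2\to L^2}$, which holds because singular and approximation numbers coincide for Hilbert-to-Hilbert compact operators, and balancing $\delta^{-s}n^{-1/2-s/d}$ against $\delta^\alpha$ by setting $\delta\sim n^{-(1/2+s/d)/(s+\alpha)}$, then sending $s\to\infty$ and $p\to 2^-$ to push $\alpha$ toward its largest admissible value, yields $\sigma_n(\mathcal{S})\leq C_\epsilon\,n^{-1/4+\epsilon/2}$ and hence $\lambda_n=\sigma_n(\mathcal{S})^2\leq C_\epsilon\,n^{-1/2+\epsilon}$ for every $\epsilon>0$. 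The principal technical obstacle is the localized Stein--Weiss estimate for $\mathcal{T}_\delta$: the original argument in Lemma \ref{lem:bdd-kernel} is agnostic to the kernel's support, so extracting a genuine positive power of $\delta$ requires splitting the estimate into the regimes $|A|\leq c\delta^d$ and $|A|>c\delta^d$ and optimizing the interpolation parameter $\gamma$ separately in each; one must also verify that $\alpha$ remains bounded below by a positive constant as $p\to 2^-$, so that the $\epsilon$-loss in the final exponent is genuinely arbitrary.
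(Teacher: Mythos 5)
Your proof takes a genuinely different route from the paper. The paper never decomposes the kernel: it restricts $\mathcal{S}$ to $L^{2+\epsilon}(D)$, uses Lemma \ref{lem:bdd-kernel}(i) to factor $\mathcal{R}=\mathcal{S}^*\mathcal{S}$ as $\mathcal{S}^{*}\circ\mathcal{I}\circ\widetilde{\mathcal{S}}$ through $L^{2+\epsilon}(D)\to L^\infty(I)\hookrightarrow L^{2+\epsilon}(I)\to L^{2+\epsilon}(D)$, and obtains the rate entirely from the known Weyl numbers $x_n(\mathcal{I})\le Cn^{-\frac{1}{2+\epsilon}}$ of the embedding $L^\infty(I)\hookrightarrow L^{2+\epsilon}(I)$, together with Proposition \ref{prop:Weyl} and the eigenvalue theorem for Weyl numbers. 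That argument is a few lines long and dimension-independent; your two-scale decomposition is a legitimate alternative in principle, but as executed it does not prove the theorem for $d=3$.

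The concrete gap is the claimed bound $\|\mathcal{T}_\delta\|_{L^2(D)\to L^2(I)}\le C\delta^\alpha$ with $\alpha$ large enough. Your balancing gives the exponent $\alpha\frac{1/2+s/d}{s+\alpha}$ for $\sigma_n(\mathcal{S})$, which tends to $\alpha/d$ as $s\to\infty$, so reaching $n^{-1/4}$ requires $\alpha\ge d/4$, i.e.\ $\alpha\ge 3/4$ in 3D. But in fact $\|\mathcal{T}_\delta\|_{L^2\to L^2}\asymp\delta^{1/2}$ for both $d=2,3$: taking $v=\chi_{B(x_0,2\|G^{-1}\|\delta)}/|B(x_0,2\|G^{-1}\|\delta)|^{1/2}$ with $x_0=G^{-1}h(t_0)$, one has $\mathcal{T}_\delta v(t)\ge c\delta^{-d/2}\int_0^{c\delta}r^{-d/2}r^{d-1}\,\d r\ge c>0$ on the set $\{t:|h(t)-h(t_0)|\le\delta/4\}$, whose measure is $\gtrsim\delta$ for the Lipschitz trajectories covered by the theorem, so $\|\mathcal{T}_\delta v\|_{L^2(I)}\ge c\delta^{1/2}$. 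Hence $\alpha\le 1/2$, your scheme tops out at $\sigma_n\lesssim n^{-1/6}$, i.e.\ $\lambda_n\lesssim n^{-1/3}$, in 3D, and no refinement of the localized Stein--Weiss estimate can help (indeed that route yields no positive power of $\delta$ at all, since the restricted weak-type constants are saturated by sets with $|A|\le c\delta^d$, for which the truncation is invisible). There is a second, related error: the exponent $\alpha\frac{1/2+s/d}{s+\alpha}$ is \emph{decreasing} in $s$ because $\alpha\le\frac12\le\frac d2$, so sending $s\to\infty$ is the worst possible choice --- the $\delta^{-s}$ blow-up eats the smoothness gain. The decomposition idea does work if you instead take $s=0$: the Hilbert--Schmidt bound $\sigma_n(\mathcal{S}_\delta)\le\|f_\delta\|_{L^2(I\times D)}n^{-1/2}\le C(\log\frac1\delta)^{1/2}n^{-1/2}$ (by the computation in Lemma \ref{lem:reg1}), combined with the upper bound $\|\mathcal{T}_\delta\|\le C\delta^{1/2}$ (provable by a Schur test on $\mathcal{T}_\delta\mathcal{T}_\delta^{*}$, whose kernel is supported on $\{|h(t)-h(t')|\le C\delta\}$ and bounded by $C(1+\log\frac{\delta}{|h(t)-h(t')|})$, using Assumption \ref{ass:h}) and the choice $\delta=n^{-1}$, gives $\sigma_n(\mathcal{S})\le Cn^{-1/2}(\log n)^{1/2}$, i.e.\ $\lambda_n\le Cn^{-1}\log n$ --- stronger than the stated theorem and consistent with the conjecture in Remark \ref{rem:conjecture}. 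So the idea is salvageable, but not with the exponents, the Stein--Weiss mechanism, or the $s\to\infty$ limit you propose.
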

\begin{proof}
Let $\widetilde{\mathcal{S}}:=\mathcal{S}|_{L^{2+\epsilon}(D)}$ for some small fixed $\epsilon>0$, i.e., the restriction of $\mathcal{S}$
on $L^{2+\epsilon}(D)$. Then by Lemma \ref{lem:bdd-kernel}(i), the range of $\mathcal{S}$ is $L^{\infty}(I)$. Hence, we can
decompose $\widetilde{S}:L^{2+\epsilon}(D)\to L^{2+\epsilon}(I)$ into $\widetilde{\mathcal{S}}=\mathcal{I}
\widetilde{\mathcal{S}}$, where $\mathcal{I}$ is the embedding operator from $L^{\infty}(I)$ to $L^{2+\epsilon}(I)$.
The multiplicative property of Weyl numbers $x_n$ in Proposition \ref{prop:Weyl} implies
\begin{align*}%\label{eq:666}
x_n(\mathcal{I}\tilde{\mathcal{S}})\leq x_n(\mathcal{I})\|\tilde{\mathcal{S}}\|_{\mathcal{B}(L^{2+\epsilon}(D),L^{\infty}(I))}.
\end{align*}
By \cite[Section 6.3.4, p. 250]{Pietsch:1987}, there holds $x_n(\mathcal{I})\le C n^{-\frac{1}{2+\epsilon}}$.
Thus, we arrive at
\begin{align*}
x_n(\mathcal{I}\tilde{\mathcal{S}})\le C n^{-\frac{1}{2+\epsilon}}.
\end{align*}
Meanwhile, Lemma \ref{lem:bdd-kernel}(ii) and a standard duality argument indicate that the dual
operator $\mathcal{S}^{*}$ is bounded from $L^{2+\epsilon}(I)$ to $L^{2+\epsilon}(D)$. Note that $\widetilde{\mathcal{R}}
={\mathcal{R}}|_{L^{2+\epsilon}(D)}$, and its eigenvalues are $\{\lambda_n\}_{n=1}^{\infty}$, which can be
bounded by the Weyl numbers $x_n(\widetilde{\mathcal{R}})$ according to the eigenvalue theorem
for Weyl operators \cite[Section 3.6.2]{Pietsch:1987}. Hence, we deduce
\begin{align*}
\lambda_{n}\le C x_{n}(\widetilde{\mathcal{R}})= Cx_{n}({\mathcal{S}}^{*}\mathcal{I}\tilde{\mathcal{S}})
\leq C\|{\mathcal{S}}^{*}\|_{\mathcal{B}(L^{2+\epsilon}(I),L^{2+\epsilon}(D))}x_n(\mathcal{I}\tilde{\mathcal{S}}).
\end{align*}
where the last step is due to Proposition \ref{prop:Weyl}. Combining the preceding
estimates completes the proof.
\end{proof}

\begin{remark}\label{rem:conjecture}
The bound in Theorem \ref{thm:svd-decay} seems not sharp. The sharp one
is conjectured to be $O(n^{-1+\epsilon})$. The statement remains
valid if the kernel $f(x,t)$ is multiplied by a bounded function.
This fact will be used below in Section \ref{ssec:limit}.
\end{remark}

\section{Degree of ill-posedness}\label{sec:anal}

Now we analyze the degree of ill-posedness of the equilibrium MPI model \eqref{eqn:mpi-general}
via the SV decay rate of the associated integral operator, and focus on three cases: (a)
nonfiltered equilibrium model, (b) limit model, and (c) filtered equilibrium model.
Dependent of the problem setting, the behavior of the forward operator can differ substantially
\cite{Weizenecker2007}. Our analysis below sheds
insights into these observations. Since the experimental parameters
for all the receive coils are comparable in practice, our analysis below focuses on one receive coil,
which allows us to simplify the notation. The decay rate given below only determines the best possible degree
of ill-posedness (i.e., upper bounds on SVs), and the results should be only used as an
indicator of the degree of ill-posedness.

\subsection{The non-filtered equilibrium model}\label{ssec:nonfilter}

First, we consider the case in the absence of the temporal analog filter $a(t)$, and discuss
the influence of the filter in Section \ref{ssec:filter} below.  Then the MPI forward operator
$F: L^2(\Omega) \rightarrow L^2(I)$ is given by
\begin{equation}
\label{eqn:non-filtered}
\left\{\begin{aligned}
  v(t) & =  \int_\Omega c(x) \kappa(x,t) \d x,   \\
  \kappa   &= \mu_0m_0 p^t \frac{\d}{\d t} \left[ \frac{\mathcal{L}_\beta(|H|)}{|H|}H\right],  \\
  H(x,t) &= g(x) - h(t).
\end{aligned}\right.
\end{equation}

Now we can state our first main result.
\begin{theorem}\label{thm:non-filter}
Let $0< \beta < \infty$, $d=1,2,3$, $h\in (H^s(I))^d$ with $s\geq 1$, $g\in (L^\infty(\Omega))^d$,
and $p \in (L^\infty(\Omega))^d$. Then for the operator $F:L^2(\Omega)\to L^2(I)$ defined in \eqref{eqn:non-filtered},
the SVs $\sigma_n$ decay as $\sigma_n\leq Cn^{\frac12 - s}$.
\end{theorem}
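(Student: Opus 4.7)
The plan is to show that the kernel $\kappa$ belongs to $L^2(\Omega; H^{s-1}(I))$ and then invoke Theorem \ref{thm:svd-decay-reg} with $D = I$ (one-dimensional) and smoothness parameter $s - 1 \geq 0$. The resulting exponent $-\tfrac12 - \tfrac{s-1}{1}$ equals $\tfrac12 - s$, which immediately yields the stated bound $\sigma_n \leq C n^{1/2 - s}$. Note that the spatial dimension $d$ of $\Omega$ enters only through the vector dimension of the factors $H, g, h, p$, not through the decay rate, because $\Omega$ sits on the $L^2$-side of the kernel.

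First I would rewrite $\kappa$ to expose its composition structure. Introducing $f(z) := \mathcal{L}_\beta(\sqrt z)/\sqrt z$, which belongs to $C_B^\infty([0,\infty))$ by Lemma \ref{lem:langevin}(ii), one has $\frac{\mathcal{L}_\beta(|H|)}{|H|}H = f(|H|^2) H$, so the chain rule gives
\begin{equation*}
\kappa = \mu_0 m_0\, p^t \bigl[\, 2 f'(|H|^2)(H\cdot\dot H)\, H + f(|H|^2)\,\dot H \,\bigr],
\end{equation*}
with $H(x,t) = g(x) - h(t)$ and $\dot H(x,t) = -\dot h(t)$. Since $g \in (L^\infty(\Omega))^d$ and $h \in (H^s(I))^d \hookrightarrow (L^\infty(I))^d$ (using $s \geq 1$), expanding $|H|^2 = |g|^2 - 2 g \cdot h + |h|^2$ gives $|H|^2 \in H^s(I; L^\infty(\Omega)) \cap L^\infty(I; L^\infty(\Omega))$, so Lemma \ref{lem:compos-Linf}(i) yields $f(|H|^2), f'(|H|^2) \in H^s(I; L^\infty(\Omega))$. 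The vector $H$ itself lies in $H^s(I; L^\infty(\Omega))$, while $\dot H$ is purely temporal and lies in $H^{s-1}(I; L^\infty(\Omega))$.

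Combining these factors using the Banach-algebra structure of $L^\infty(\Omega)$ and a product estimate in the Bochner-Sobolev space $H^{s-1}(I; L^\infty(\Omega))$ (analogous to Theorem \ref{thm:product} but with an $L^\infty$-valued target), the bracketed quantity belongs to $H^{s-1}(I; L^\infty(\Omega))$. Multiplying by the time-independent $p \in (L^\infty(\Omega))^d$ preserves this regularity, and the boundedness of $\Omega$ gives the embedding $L^\infty(\Omega) \hookrightarrow L^2(\Omega)$, so $\kappa \in H^{s-1}(I; L^2(\Omega)) \cong L^2(\Omega; H^{s-1}(I))$. Theorem \ref{thm:svd-decay-reg} then completes the argument.

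The main technical obstacle will be the product estimate in $H^{s-1}(I; L^\infty(\Omega))$ for non-integer $s$, since Theorem \ref{thm:product} as stated covers only scalar Sobolev spaces on a Euclidean domain. I expect this to be handled by a direct estimation of the Slobodecki\v{\i} seminorm: writing $(uv)(t_1) - (uv)(t_2) = u(t_1)(v(t_1) - v(t_2)) + (u(t_1) - u(t_2)) v(t_2)$, taking $L^\infty(\Omega)$-norms pointwise in $t$, and using the $L^\infty(I; L^\infty(\Omega))$ bounds of the factors already established in the preceding step. For integer $s$, the Leibniz rule combined with H\"older's inequality suffices, and in all cases the $L^\infty$ bound on $f$ and its derivatives (inherited from Lemma \ref{lem:langevin}(ii)) ensures the constants remain finite.
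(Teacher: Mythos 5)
Your proposal is correct and follows essentially the same route as the paper's proof: establish $|H|^2\in H^s(I;L^\infty(\Omega))$, apply Lemma \ref{lem:compos-Linf}(i) to the composition with $\mathcal{L}_\beta(\sqrt{z})/\sqrt{z}\in C_B^\infty([0,\infty))$, use the product rule together with Theorem \ref{thm:product} to place $\kappa$ in $H^{s-1}(I;L^2(\Omega))$, and invoke Theorem \ref{thm:svd-decay-reg} with the time interval as the smooth variable. Your explicit chain-rule expansion via $f(|H|^2)$ and your flagged concern about justifying the product estimate in the $L^\infty(\Omega)$-valued Bochner--Sobolev setting are, if anything, slightly more careful than the paper's presentation, but do not constitute a different argument.
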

\begin{proof}
Since $h\in (H^s(I))^d$ and $g\in (L^\infty(\Omega))^d$, by Sobolev embedding, the function
$H(x,t)=g(x)-h(t)\in H^s(I;(L^\infty(\Omega))^d)\subset L^\infty(I;(L^\infty(\Omega))^d)$, and $\dot H(x,t)=-\dot{h}(t)
\in (H^{s-1}(I))^d$. Clearly, we have
\begin{equation}\label{eqn:p-H}
  p^tH \in H^s(I;L^\infty(\Omega)) \quad\mbox{and}\quad
p^t\dot H = -p^t\dot h \in H^{s-1}(I;L^\infty(\Omega)).
\end{equation}
Further, by Lemma \ref{lem:langevin}, $\frac{L_\beta(\sqrt{z})}{\sqrt{z}}\in C_B^\infty([0,\infty))$
and since $s\geq 1$, by Theorem \ref{thm:product}, simple computation shows $|H|^2 \in H^s(I;L^\infty(\Omega))$,
and thus by Lemma \ref{lem:compos-Linf}(i), $\frac{L_\beta(|H|)}{|H|}\in H^s(I;L^\infty(\Omega))$ and
$\frac{\d}{\d t}\frac{L_\beta(|H|)}{|H|}\in H^{s-1}(I;L^\infty(\Omega))$.  Now, by the product rule,
\eqref{eqn:p-H} and Theorem \ref{thm:product}, we deduce
\begin{align}
  \kappa & = \mu_0m_0p^t \frac{\d}{\d t}\left(\frac{L_\beta(|H|)}{|H|}H\right) \nonumber\\
    & = \mu_0m_0\left(p^tH \frac{\d}{\d t}\frac{L_\beta(|H|)}{|H|} +
  \frac{L_\beta(|H|)}{|H|} p^t\dot H\right)\label{eqn:kernel}\\
   & \in H^{s-1}(I;L^\infty(\Omega))\subset H^{s-1}(I;L^2(\Omega)).\nonumber
\end{align}
Then the desired assertion follows from Theorem \ref{thm:svd-decay-reg}.
\end{proof}

\begin{remark}
For $p,g\in (L^\infty(\Omega))^d$, Theorem \ref{thm:non-filter} describes the potential influence of the trajectory $h(t)$
on the SV decay. For smooth trajectories, i.e., $h(t)\in (C^\infty(I))^d$ {\rm(}e.g., sinusoidal trajectory, common in
experimental setup{\rm)}, the SVs decay rapidly, and thus the inverse problem is very ill-posed.
For nonsmooth trajectories, i.e., triangular trajectory {\rm(}$h(t)\in (H^{\frac{3}{2}-\epsilon}(I))^d$, for any
$\epsilon\in(0,\frac12)${\rm)}, the decay may be slower.
\end{remark}

In Theorem \ref{thm:non-filter}, we assume $p,g\in (L^\infty(\Omega))^d$ only. It does not account for possible
additional regularity of $\kappa(x,t)$ in the spatial variable $x$. In practice, it is often taken to be
homogeneous/linear, and thus $\kappa(x,t)$ is very smooth in $x$. This extra regularity can significantly
affect the SV decay, which is described next.

\begin{theorem}\label{thm:non-filter2}
Let $0< \beta < \infty$, $d=1,2,3$, $h\in (H^{s}(I))^d$ with $s>\frac32$, $g\in (H^r(\Omega))^d$, $r>\frac{d}{2}$
and $p \in (C^\infty(\Omega))^d$. Then for the operator $F:L^2(\Omega)\to L^2(I)$ defined in \eqref{eqn:non-filtered}, the SVs
$\sigma_n$ decay as $\sigma_n\le Cn^{-\frac{1}{2}-\frac{r}{d}}$.
\end{theorem}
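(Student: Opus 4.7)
The plan is to repeat the argument of Theorem~\ref{thm:non-filter} but with the roles of the two variables of $\kappa(x,t)$ interchanged: rather than tracking temporal regularity and concluding $\kappa\in H^{s-1}(I;L^2(\Omega))$, I would track \emph{spatial} regularity and show $\kappa\in L^2(I;H^r(\Omega))$. Once that bivariate regularity is established, Theorem~\ref{thm:svd-decay-reg} (applied with the roles of $\Omega$ and $D$ in that theorem's statement identified with $I$ and $\Omega$, respectively, and the smoothness index $s$ replaced by $r$) yields the claimed bound $\sigma_n\le C n^{-\frac{1}{2}-\frac{r}{d}}$.

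For the setup I would use the Sobolev embeddings granted by the hypotheses: $r>\tfrac d2$ gives $H^r(\Omega)\hookrightarrow L^\infty(\Omega)$, and $s>\tfrac32$ gives $h,\dot h\in(L^\infty(I))^d$. Hence $H(x,t)=g(x)-h(t)$ lies in $L^\infty(\Omega\times I;\R^d)$ and, for each fixed $t$, in $(H^r(\Omega))^d$ with norm bounded uniformly in $t$. Theorem~\ref{thm:product} (applicable since $r>\tfrac d2$, so $H^r(\Omega)$ is an algebra) then gives $|H|^2=\sum_i H_i^2\in L^\infty(I;H^r(\Omega))$, again with a uniform bound in $t$.

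The key technical step is a scalar composition lemma: if $v\in H^r(\Omega)\cap L^\infty(\Omega)$ with $r>\tfrac d2$ and $\varphi\in C_B^k(\R)$ for $k\ge\lceil r\rceil+1$, then $\varphi\circ v\in H^r(\Omega)$, with a norm estimate depending only on $\|v\|_{H^r(\Omega)}$, $\|v\|_{L^\infty(\Omega)}$, and $\|\varphi\|_{C_B^k}$. This is the purely spatial analogue of Lemma~\ref{lem:compos-Linf} and is proved along the same blueprint: for $r\in(0,1)$ apply the mean value theorem inside the Sobolev--Slobodecki\u\i\ seminorm \eqref{eqn:SS-seminorm}; for $r\ge 1$ use the chain rule and Theorem~\ref{thm:product} inductively. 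Taking $\varphi(z)=L_\beta(\sqrt z)/\sqrt z$ (in $C_B^\infty([0,\infty))$ by Lemma~\ref{lem:langevin}(ii)) and $v=|H(\cdot,t)|^2$ gives $L_\beta(|H|)/|H|\in L^\infty(I;H^r(\Omega))$; taking $\varphi'$ similarly gives $\varphi'(|H|^2)\in L^\infty(I;H^r(\Omega))$.

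To finish, I would differentiate in $t$ via the chain rule,
\begin{equation*}
\frac{\d}{\d t}\!\left(\frac{L_\beta(|H|)}{|H|}\right)=-2\,\varphi'(|H|^2)\,(g(x)-h(t))^t\dot h(t),
\end{equation*}
and insert this into the product-rule expansion \eqref{eqn:kernel}. Every factor on the right-hand side is either in $L^\infty(I;H^r(\Omega))$ (by the preceding step and Theorem~\ref{thm:product}) or in $(L^\infty(I))^d$ (namely $\dot h$), and $p\in(C^\infty(\Omega))^d$ acts as a smooth multiplier on $H^r(\Omega)$. Applying Theorem~\ref{thm:product} factor by factor yields $\kappa\in L^\infty(I;H^r(\Omega))\subset L^2(I;H^r(\Omega))$, so Theorem~\ref{thm:svd-decay-reg} delivers the decay rate. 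The main obstacle is verifying the scalar composition lemma in the fractional range $r\in(\tfrac d2,\infty)\setminus\N$ with constants uniform in $t$; once established, the rest is essentially algebraic manipulation mirroring the proof of Theorem~\ref{thm:non-filter}.
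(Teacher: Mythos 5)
Your proposal is correct and follows essentially the same route as the paper: the paper likewise establishes $H^r$-regularity of $\kappa$ in the spatial variable (working in $H^r(\Omega;W^{1,\infty}(I))$ rather than $L^\infty(I;H^r(\Omega))$) and then applies Theorem \ref{thm:svd-decay-reg} to the adjoint operator, which shares the singular values. The one step you flag as the main obstacle --- the scalar composition lemma on $H^r(\Omega)$ uniform in $t$ --- is exactly what the paper's Lemma \ref{lem:compos-Linf}(ii) supplies (with the time derivative absorbed into the $W^{1,\infty}(I)$-valued target), so no new lemma is actually required.
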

\begin{proof}
By Sobolev embedding, for $s>\frac32$, $\dot H(x,t) = -\dot h(t) \in (H^{s-1}(I))^d\subset  (L^\infty(I))^d$. Then under
the given assumptions, $H(x,t)=g(x)-h(t)\in H^r(\Omega;(H^s(I))^d)\subset H^r(\Omega;(W^{1,\infty}(I))^d)\subset
L^\infty(\Omega;(W^{1,\infty}(I))^d)$. By Lemma \ref{lem:langevin}, $\frac{L_\beta(\sqrt{z})}{\sqrt{z}}\in C_B^\infty
([0,\infty))$, and by Theorem \ref{thm:product}, we deduce $|H(x,t)|^2\in H^r(\Omega;W^{1,\infty}(I))$.
Hence, Lemma \ref{lem:compos-Linf}(ii) implies $\frac{L_\beta(|H(x,t)|)}{|H(x,t)|}\in H^r(\Omega;W^{1,\infty}(I))$,
and $\frac{\d}{\d t}\frac{L_\beta(|H(x,t)|)}{|H(x,t)|} \in H^r(\Omega;L^\infty(I))$. Further, for $r>\frac d2$,
for small $\epsilon>0$, we have $2r -\frac d2-\epsilon >r$, and thus it follows from \eqref{eqn:p-H} and Theorem
\ref{thm:product} that $p^tH\frac{\d}{\d t}\frac{L_\beta(|H(x,t)|)}{|H(x,t)|}\in H^{r}(\Omega;L^\infty(I))$ and
similarly  $\frac{L_\beta(|H(x,t)|)}{|H(x,t)|}p^t\dot h(t)\in H^r(\Omega;L^\infty(I))$. These two inclusions and
\eqref{eqn:kernel} show that $\kappa\in H^{r}(\Omega; L^\infty(I))\subset H^{r}(\Omega;L^2(I))$. Thus, by Theorem
\ref{thm:svd-decay-reg}, the SVs of the adjoint operator decay as $O(n^{-\frac{1}{2}-\frac{r}{d}})$. Since the adjoint
operator $F^*:L^2(I)\to L^2(\Omega)$(with respect to the $L^2(I\times \Omega)$ inner product) shares the SVs
\cite[p. 27, eq. (2.1)]{GohbergKrein:1969}, the desired assertion follows.
\end{proof}

By Theorem \ref{thm:non-filter2}, the SVs can decay fast for a nonsmooth trajectory $h(t)$, so long as $p(x)$ and
$g(x)$ are sufficiently smooth. The regularity requirement might be relaxed by analyzing more precisely
pointwise multiplication in Bochner-Sobolev spaces.
Since $\dot h(t)$ and $p(x)$ enter the kernel $\kappa(x,t)$ as pointwise multipliers, if uniformly bounded,
they act as bounded operators on $L^2(I)$ and $L^2(\Omega)$, respectively, and the decay rate remains valid
\cite[p. 27, eq. (2.2)]{GohbergKrein:1969}.

\subsection{Limit model}\label{ssec:limit}

It was reported that the spatial resolution increases with particle diameter $D$ \cite{Weizenecker2007,
knopp2008singular}, i.e., a large $\beta$ value in the model \eqref{eqn:non-filtered}. Hence, we analyze
the limit case $\beta \rightarrow \infty$ below. First, we derive the expression for the limit integral
operator. Throughout this part, in Assumption \ref{ass:h}, the domain $D$ refers to $\Omega$.

\begin{proposition}\label{lem:limit-model}
Let $h\in (H^s(I))^d$ with $s\geq 1$, $g\in (L^\infty(\Omega))^d$, and $p \in (L^\infty(\Omega))^d$.
For $\beta\to\infty$, there holds
\begin{equation*}
  \frac{\d}{\d t}\left(\mu_0 p^t  L_\beta(|H|)\frac{H}{|H|}\right) \to \frac{\d}{\d t}\left(\mu_0 p^t \frac{H}{|H|}\right)\quad \mbox{ in } H^{-1}(I;L^2(\Omega)),
\end{equation*}
and the limit integral operator $\widetilde F$ of the operator $F$ defined in \eqref{eqn:non-filtered} is given by
\begin{equation}\label{eqn:problem-limit-case}
\left\{
\begin{aligned}
  v(t) & =  \int_\Omega c(x) \tilde{\kappa}(x,t) \d x,   \\
  \tilde{\kappa} & = \mu_0m_0 p^t \left(-\frac{H H^t}{|H|^3} + \frac{1}{|H|} I_d \right) \dot{H}, \\
  H(x,t) &= g(x) - h(t).
\end{aligned}
\right.
\end{equation}
\end{proposition}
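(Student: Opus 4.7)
For the convergence part, the plan is to prove the stronger intermediate statement
\[ \mu_0 p^t L_\beta(|H|)\frac{H}{|H|} \;\longrightarrow\; \mu_0 p^t \frac{H}{|H|} \quad \text{in } L^2(I;L^2(\Omega)) \]
by Lebesgue's dominated convergence theorem, and then to pass the time derivative through the convergence. Under Assumption \ref{ass:h}, the field-free trajectory $\{(x,t)\in \Omega\times I: Gx=h(t)\}$ is at most one-dimensional inside $\Omega\times I$, hence of $(d+1)$-dimensional Lebesgue measure zero. Off this set Lemma \ref{lem:langevin}(i) yields the pointwise limit $L_\beta(|H(x,t)|)\to 1$, so the integrands converge pointwise almost everywhere. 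The uniform bound $|L_\beta(z)|\leq 1$ for $z\geq 0$, together with $p\in (L^\infty(\Omega))^d$ and the finiteness of $|\Omega\times I|$, supplies an integrable majorant. Since $\partial_t$ is a bounded linear operator from $L^2(I;L^2(\Omega))$ into $H^{-1}(I;L^2(\Omega))$ (by duality against $H_0^1(I;L^2(\Omega))$), the $L^2$-convergence transfers immediately to the time derivatives, which is precisely the stated claim.

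For the explicit form of $\tilde\kappa$, I would compute pointwise on $\{H\neq 0\}$ using the chain and product rules. Writing $H/|H|=|H|^{-1}H$ and using $\frac{\d}{\d t}|H|^{-1} = -|H|^{-2}\frac{H^t\dot H}{|H|} = -H^t\dot H/|H|^3$ gives
\[ \frac{\d}{\d t}\left(\frac{H}{|H|}\right) = \frac{\dot H}{|H|} - \frac{H(H^t\dot H)}{|H|^3} = \left(-\frac{HH^t}{|H|^3} + \frac{1}{|H|}I_d\right)\dot H, \]
and multiplying by $\mu_0 m_0 p^t$ (the factor $m_0$ is carried over from the original definition of $\kappa$ in \eqref{eqn:non-filtered}) reproduces $\tilde\kappa$ exactly as claimed. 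The only algebra that needs care is ensuring the outer product $HH^t$ (a $d\times d$ matrix) is contracted with $\dot H$ in the correct order.

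The main subtlety I anticipate concerns the interpretation of $\tilde\kappa$ itself: the pointwise formula carries an $|H|^{-1}$ singularity along the FFP trajectory, so $\tilde\kappa(\cdot,t)$ need not lie in $L^2(\Omega)$ in general, only in $L^p(\Omega)$ for $p<d$. Consequently the identity $\frac{\d}{\d t}(\mu_0 p^t H/|H|)=\tilde\kappa/m_0$ must be read distributionally, and the $H^{-1}(I;L^2(\Omega))$-framework in the proposition is precisely what is needed so that both sides of the convergence live in a common space. This same singular behavior off $L^2$-kernels is exactly what motivates the extension of Theorem \ref{thm:svd-decay-reg} developed via Lemmas \ref{lem:bdd-kernel}--\ref{lemma:compact} and Theorem \ref{thm:svd-decay}, which are then applied to the limit operator in Section \ref{ssec:limit}.
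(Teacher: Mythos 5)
Your proposal is correct and follows essentially the same route as the paper: both arguments rest on dominated convergence with the uniform bound $|L_\beta|\leq 1$ and the boundedness of $H$ on $\Omega\times I$, and both transfer the $L^2(I;L^2(\Omega))$ convergence to $H^{-1}(I;L^2(\Omega))$ by duality (the paper does this explicitly by integrating by parts against products $\phi(t)\psi(x)$ and invoking their density in $H_0^1(I;L^2(\Omega))$, which is exactly your statement that $\partial_t$ is bounded from $L^2$ into $H^{-1}$). Your additional remarks on the null set where $H=0$ and on the distributional reading of the singular limit kernel are consistent with the paper's treatment.
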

\begin{proof}
First, by the assumptions on $g$, $h$ and $p$ and Sobolev embedding theorem, $H(x,t)=g(x)-h(t)\in (L^\infty
(\Omega\times I))^d$. Then for any $\phi(t)\in H_0^1(I)$ and $\psi(x)\in L^2(\Omega)$, integration by parts yields
\begin{align*}
  &\int_I\int_\Omega \mu_0m_0 p(x)^t\frac{\d}{\d t}\left[L_\beta(|H(x,t)|)\frac{H(x,t)}{|H(x,t)|}\right] \phi(t)\psi(x)\d x\d t \\
  = &-\int_I\int_\Omega\mu_0m_0p(x)^tL_\beta(|H(x,t)|)\frac{H(x,t)}{|H(x,t)|} \dot\phi(t)\psi(x)\d x\d t.
\end{align*}
Now it follows from Cauchy-Schwarz inequality that
\begin{align*}
  &\Big|\int_I\int_\Omega [L_\beta(|H(x,t)|)-{\rm sign}(|H(x,t)|)]\mu_0m_0p(x)^t\frac{H(x,t)}{|H(x,t)|} \dot\phi(t)\psi(x)\d x\d t\Big|\\
  \leq&\mu_0m_0\|p\|_{(L^\infty(\Omega))^d}\int_I\int_\Omega\Big|L_\beta(|H(x,t)|)-{\rm sign}(|H(x,t)|)\Big|\times|\dot\phi(t)\psi(x)|\d x\d t\\
  \leq&\mu_0m_0\|p\|_{(L^\infty(\Omega))^d}\Big\|L_\beta(|H(x,t)|)-{\rm sign}(|H(x,t)|)\Big\|_{L^2(\Omega\times I)}\|\dot\phi\|_{L^2(I)}\|\psi\|_{L^2(\Omega)}.
\end{align*}
Since $\|H\|_{L^\infty(\Omega\times I)}<\infty$, by Lemma \ref{lem:langevin}(i) and
Lebesgue's dominated convergence theorem, we deduce
\begin{equation*}
  \lim_{\beta\to\infty}\|L_\beta(|H(x,t)|)-{\rm sign}(|H(x,t)|)\|_{L^2(\Omega\times I)}=0.
\end{equation*}
By integration by parts again, and density of the product $\psi\phi$ in $H_0^1(I;L^2(\Omega))\simeq L^2(\Omega;H_0^1(I))$
(see, e.g., \cite[Section 6.2, p. 244]{Pietsch:1987} or \cite[Lemma 1.2.19, p. 23]{HytonenWeis:2016}), we obtain the assertion.
\end{proof}

Next we analyze the decay rate of the SVs of the limit operator $\widetilde F$. First, we
give a result on the Bochner-Sobolev regularity of the function $|Gx-h(t)|^{r}$, $r>-\frac d2$.
\begin{lemma}\label{lem:reg1}
Let Assumption \ref{ass:h} hold, and let $h(t)$ be sufficiently smooth.
Then for any $r>-\frac d2$, the function $f(x,t)=|Gx-h(t)|^{r} \in H^s(I;L^2(\Omega))$ for any $s<r+\frac d2$.
\end{lemma}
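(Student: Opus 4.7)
The plan is to combine a direct Sobolev--Slobodecki\v{\i} estimate for $s\in(0,1)$ with an induction on $\lfloor s\rfloor$ for larger $s$, exploiting the fact that under Assumption \ref{ass:h} the singular set of $f(\cdot,t)$ is just the single point $G^{-1}h(t)\in\Omega$, combined with the pointwise homogeneity of $|y|^r$. The base $L^2(I;L^2(\Omega))$ regularity is immediate: changing variables $y=Gx-h(t)$ with $\d y=|\det G|\,\d x$ and passing to polar coordinates around $y=0$ gives
\[
  \|f(\cdot,t)\|_{L^2(\Omega)}^2 \leq C\int_0^R \rho^{2r+d-1}\,\d\rho<\infty
\]
uniformly in $t$, precisely because $r>-\tfrac{d}{2}$.

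For $s\in(0,1)$ and $\delta:=|t_1-t_2|$, I would split $\Omega$ into a far region $A=\{x:|Gx-h(t_1)|\geq c_0\delta\}$ and its complement $B$, with $c_0$ chosen by smoothness of $h$ so that $|h(t_1)-h(t_2)|\leq c_0\delta/2$. On $A$, the two vectors $y_i:=Gx-h(t_i)$ are comparable ($|y_2|\geq |y_1|/2$), so the mean value theorem applied to $\phi(y)=|y|^r$ yields $||y_1|^r-|y_2|^r|\leq C\delta|Gx-h(t_1)|^{r-1}$; polar integration over $A$ then gives contribution bounded by $C\delta^{\min(2,\,2r+d)}$ (with a logarithmic factor at the borderline $r=1-d/2$). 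On $B$ both $|y_i|\leq C\delta$, and direct polar integration of $|y|^{2r}$ over a ball of radius $C\delta$ gives contribution bounded by $C\delta^{2r+d}$, using $2r+d>0$. Combining and plugging into the seminorm,
\[
  |f|_{H^s(I;L^2(\Omega))}^2 \leq C\int_I\int_I |t_1-t_2|^{\min(2,\,2r+d)-1-2s}\,\d t_1\,\d t_2,
\]
which is finite iff $s<\min(1,r+d/2)$, covering all $s\in(0,1)$ with $s<r+d/2$.

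For $s\geq 1$ I would induct on $\lfloor s\rfloor$ by differentiating in time:
\[
  \partial_t f(x,t) = -\sum_{j=1}^d \dot h_j(t)\,\psi_j(Gx-h(t)),\qquad \psi_j(y)=r|y|^{r-2}y_j,
\]
where each $\psi_j$ is positively homogeneous of degree $r-1$ and smooth away from $y=0$. The base-case argument above uses only the bounds $|\phi(y)|\leq C|y|^r$ and $|\nabla\phi(y)|\leq C|y|^{r-1}$, so it applies verbatim to $\psi_j(Gx-h(t))$ with degree $r-1>-d/2$ (which follows from $s<r+d/2$ and $s\geq 1$) and regularity $s-1<(r-1)+d/2$; by induction this gives $\psi_j(Gx-h(t))\in H^{s-1}(I;L^2(\Omega))$. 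Multiplication by the smooth function $\dot h_j(t)$ of $t$ alone preserves $H^{s-1}$-regularity, so $\partial_t f\in H^{s-1}(I;L^2(\Omega))$, whence $f\in H^s(I;L^2(\Omega))$.

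The main obstacle is obtaining the sharp exponent $\min(2,\,2r+d)$ in the base-case bound, since any loss there propagates through the induction and would weaken the threshold from $s<r+d/2$ to something strictly smaller; this requires carefully balancing the gradient estimate on $A$ against the pointwise bound on $B$, and tracking constants through the $G$-change-of-variables and polar integrations. A secondary point is verifying that the base-case argument genuinely extends without modification to the homogeneous functions $\psi_j$ of degree $r-1$ arising in the induction step, rather than only to the radial prototype $|y|^r$.
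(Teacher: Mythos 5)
Your argument is correct, and it reaches the conclusion by a genuinely different route than the paper for the fractional orders. The paper establishes only the two integer endpoints --- $f\in L^2(I;L^2(\Omega))$ for $r>-\frac d2$ via a rearrangement/polar-coordinate bound, and $f\in H^1(I;L^2(\Omega))$ for the shifted exponent $r>1-\frac d2$ by differentiating and reusing the $L^2$ bound --- and then passes to intermediate $s$ ``by interpolation'' across the family $r\mapsto|Gx-h(t)|^{r}$. That last step is delicate, since it interpolates between \emph{different} functions (different values of $r$) rather than placing one fixed function in two spaces; making it airtight would require something like Stein's interpolation for the analytic family $z\mapsto|Gx-h(t)|^{z}$. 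Your direct Sobolev--Slobodecki\v{\i} estimate avoids this entirely: the near/far splitting of $\Omega$ at scale $\delta=|t_1-t_2|$ yields the explicit modulus of continuity $\|f(\cdot,t_1)-f(\cdot,t_2)\|_{L^2(\Omega)}^2\leq C\,\delta^{\min(2,\,2r+d)}$ (with a logarithm at the borderline $2r+d=2$), which feeds into the seminorm integral and gives exactly the threshold $s<\min(1,r+\frac d2)$ for a \emph{fixed} $r$, with no interpolation theory. Your induction for $s\geq1$ parallels the paper's differentiation step but is organized more carefully: since the base-case estimate uses only $|\psi(y)|\leq C|y|^{\rho}$ and $|\nabla\psi(y)|\leq C|y|^{\rho-1}$, it does apply verbatim to the degree-$(r-1)$ homogeneous functions $\psi_j$, and the constraint $r-1>-\frac d2$ needed at each level is exactly what $s<r+\frac d2$ with $s\geq1$ supplies, so the two concerns you flag at the end both resolve affirmatively. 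The trade-off: the paper's proof is shorter and produces explicit constants in terms of $|\Omega|$ and $\|G^{-1}\|$, while yours is longer but fully self-contained at fractional order and makes the borderline logarithmic loss visible.
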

\begin{proof}
Let $\epsilon>0$ be sufficiently small and $r=-\frac d2+\frac \epsilon2$. Since $G$ is invertible, by the relation
$|Gx-h(t)|=|G(x-G^{-1}h(t))|\geq \|G^{-1}\|^{-1}|x-G^{-1}h(t)|$ for any fixed $t\in [0,T]$, we have
\begin{equation*}
 \int_I\int_\Omega |Gx-h(t)|^{-d+\epsilon} {\d}x{\d}t
\leq\|G^{-1}\|^{d-\epsilon}\int_{0}^{T}\Big(\int_{\Omega}|x-G^{-1}h(t)|^{-d+\epsilon}{\d}x\Big){\d}t.
\end{equation*}
 Let $B(G^{-1}h(t),\rho):=\{x\in \mathbb{R}^{d}: |x-G^{-1}h(t)|\leq \rho\}$ satisfy that $|B(G^{-1}h(t),\rho)|=|\Omega|$,
which implies $|\Omega|=|\mathbb{S}^{d-1}|\rho^d$. By \eqref{eq:2233} with $A:=\Omega$, we obtain
\begin{equation*}
 \int_I\int_\Omega |Gx-h(t)|^{-d+\epsilon} {\d}x{\d}t
\leq\|G^{-1}\|^{d-\epsilon}\int_{0}^{T}\Big(\int_{B(G^{-1}h(t),\rho)}|x-G^{-1}h(t)|^{-d+\epsilon}{\d}x\Big){\d}t.
\end{equation*}
Then changing to polar coordinates for the inner integral leads to
\begin{align}
\int_I\int_\Omega |Gx-h(t)|^{-d+\epsilon} \d x\d t&\leq \|G^{-1}\|^{d-\epsilon} T\int_{0}^\rho r^{-d+\epsilon} r^{d-1}\mathrm{d}r |\mathbb{S}^{d-1}|\nonumber\\
&=\epsilon^{-1}\|G^{-1}\|^{d-\epsilon} T|\Omega|^{\frac{\epsilon}{d}}|\mathbb{S}^{d-1}|^{1-\frac{\epsilon}{d}}.\label{eq:777}
\end{align}
This proves $f(x,t)\in L^2(I;L^2(\Omega))$ for all $r>-\frac{d}{2}$. Next, the derivative $\dot f$
of $f=|Gx-h(t)|^r$ is given by $\dot f= -r|Gx-h(t)|^{r-2}(Gx-h(t))^{t}\dot{h}(t).$ Thus,
$|\dot f|\leq r|Gx-h(t)|^{r-1}|\dot{h}(t)|,$ which, together with \eqref{eq:777},
implies that for $r=-\frac{d}{2}+1+\frac{\epsilon}{2}$, there holds
\[
\|\dot f\|_{L^2(I;L^2(\Omega))}\leq r\sqrt{\epsilon^{-1}\|G^{-1}\|^{d-\epsilon} T|\Omega|^\frac{\epsilon}{d}|\mathbb{S}^{d-1}|^{1-\frac{\epsilon}{d}}} \|\dot{h}\|_{L^2(I)}.
\]
Thus, $\|f\|_{H^1(I;L^2(\Omega))}<\infty.$ By interpolation, for any $-\frac{d}{2}+\frac{\epsilon}{2}<r<-\frac{d}{2}+1+\frac{\epsilon}{2}$,
there holds $|Gx-h(t)|^r\in H^r(I;L^2(\Omega))$, with $s=r+\frac{d}{2}$. The general case of any $r>-\frac{d}{2}+1$ can
be analyzed analogously. The desired assertion follows since the constant $\epsilon\in(0,\frac12)$ can be made arbitrarily small.
\end{proof}

Lemma \ref{lem:reg1} does not cover the case $r=-\frac d2$,
which is treated next.
\begin{lemma}\label{lem:reg3}
Let Assumption \ref{ass:h} hold. Then for any $p\in(1,2)$, the function $f(x,t)=|Gx-h(t)|^{-\frac d2} \in
L^{2-\epsilon}(\Omega;L^p(I))$ for any $\epsilon>0$.
\end{lemma}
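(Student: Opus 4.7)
The plan is to control the mixed-norm
\begin{equation*}
\|f\|_{L^{2-\epsilon}(\Omega;L^p(I))}^{2-\epsilon} = \int_\Omega \Bigl(\int_I |Gx-h(t)|^{-dp/2}\,\d t\Bigr)^{(2-\epsilon)/p}\,\d x
\end{equation*}
by swapping the order of integration via Minkowski's integral inequality. Observe first that on the bounded domain $\Omega$, the inclusion $L^{r_2}(\Omega)\hookrightarrow L^{r_1}(\Omega)$ for $r_1<r_2$ allows us to restrict attention to small $\epsilon\in(0,2-p)$; the case of larger $\epsilon$ then follows at once, so the quantifier ``for any $\epsilon>0$'' is not actually an additional burden.

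For $\epsilon\in(0,2-p)$, one has $p\leq 2-\epsilon$, and Minkowski's integral inequality (applied in the direction $L^{2-\epsilon}_x L^p_t\hookrightarrow L^p_t L^{2-\epsilon}_x$) gives
\begin{equation*}
\|f\|_{L^{2-\epsilon}(\Omega;L^p(I))}\leq \|f\|_{L^p(I;L^{2-\epsilon}(\Omega))}.
\end{equation*}
The problem thus reduces to estimating $\|f(\cdot,t)\|_{L^{2-\epsilon}(\Omega)}$ uniformly in $t\in I$. For a fixed $t$, I would repeat the route used in the proof of Lemma \ref{lem:reg1}: first invoke $|Gx-h(t)|\geq \|G^{-1}\|^{-1}|x-G^{-1}h(t)|$; next apply the rearrangement inequality \eqref{eq:2233} to replace the integration over $\Omega$ by integration over the ball $B(G^{-1}h(t),\rho)$ of the same volume as $\Omega$; and finally change to polar coordinates centered at $G^{-1}h(t)$. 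The resulting radial integrand $r^{-d(2-\epsilon)/2+d-1}=r^{-1+d\epsilon/2}$ is integrable on $(0,\rho)$ for every $\epsilon>0$, producing a bound on $\|f(\cdot,t)\|_{L^{2-\epsilon}(\Omega)}$ that is independent of $t$. Integrating the resulting uniform constant over the bounded interval $I$ then yields $\|f\|_{L^p(I;L^{2-\epsilon}(\Omega))}\leq C T^{1/p}$ and hence the claim.

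Conceptually, the point of the argument is that the pointwise temporal integral $\int_I |Gx-h(t)|^{-dp/2}\,\d t$ is in fact divergent at every $x$ with $Gx\in h(I)$ (since $dp/2\geq 1$ for $p\in(1,2)$ and $d\geq 2$, and the mean value theorem near the crossing times $t_j$ gives a $|t-t_j|^{-dp/2}$ singularity), so one cannot hope to realize the mixed norm by iterated integration in the naive order. The Minkowski swap trades this for the spatial integrand $|Gx-h(t)|^{-d(2-\epsilon)/2}$, whose exponent is strictly less than $d$ and whose singularity is therefore locally integrable in $\mathbb{R}^d$. I do not anticipate substantive technical obstacles here: every ingredient, including the rearrangement estimate \eqref{eq:2233} and the polar-coordinate bound, is already at hand from Lemmas \ref{lem:reg1} and \ref{lem:bdd-kernel}, and Assumption \ref{ass:h} enters only through the ambient geometric setup.
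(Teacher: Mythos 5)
Your argument is correct, and it follows the same overall strategy as the paper's proof --- swap the order of the $x$- and $t$-integrations by Minkowski's integral inequality and then bound the spatial norm of the singular kernel uniformly in $t$ --- but the implementation is genuinely different and more elementary. The paper reduces the claim to showing $g(x):=\int_I|Gx-h(t)|^{-pd/2}\,\d t\in L^{\frac{2}{p},\infty}(\Omega)$, applies Minkowski in that Lorentz norm, and bounds $\big\||Gx-h(t)|^{-pd/2}\big\|_{L^{\frac{2}{p},\infty}(\Omega)}$ uniformly in $t$ by computing the nonincreasing rearrangement in $x$; the conclusion then follows from the embedding $L^{2,\infty}(\Omega)\subset L^{2-\epsilon}(\Omega)$ on the bounded domain. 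You instead stay entirely in strong Lebesgue spaces: after reducing to $\epsilon\le 2-p$, you use the mixed-norm Minkowski inequality $L^{2-\epsilon}_xL^p_t\hookrightarrow L^p_tL^{2-\epsilon}_x$ and bound $\|f(\cdot,t)\|_{L^{2-\epsilon}(\Omega)}$ uniformly in $t$ via \eqref{eq:2233} and polar coordinates (the exponent $d(2-\epsilon)/2<d$ makes the radial integral converge, with a constant depending only on $|\Omega|$, $\|G^{-1}\|$, $d$, $\epsilon$). What the paper's route buys is the sharper endpoint statement that $x\mapsto\|f(x,\cdot)\|_{L^p(I)}$ lies in weak-$L^2(\Omega)$, of which the lemma is a consequence; what your route buys is the avoidance of Lorentz-space machinery altogether, at the cost of only obtaining $L^{2-\epsilon}$ for each $\epsilon>0$ --- which is exactly what the statement asks for. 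Both arguments use only the invertibility of $G$ from Assumption \ref{ass:h}, and your uniform-in-$t$ bound is in fact the same fact the paper invokes at the start of the proof of Lemma \ref{lem:bdd-kernel}(i). One minor caveat: your side remark that the pointwise temporal integral diverges at crossing points implicitly uses an upper bound on $|\dot h|$ (to get $|h(t)-h(t_j)|\le C|t-t_j|$), which Assumption \ref{ass:h} does not literally provide; this does not affect the proof itself.
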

\begin{proof}
We only need to show
\begin{align}\label{eq:1122}
g(x):=\int_I|Gx-h(t)|^{-\frac{pd}{2}}\mathrm{d}t\in L^{\frac{2}{p},\infty}(\Omega) \quad\text{ for all } p\in (1,2).
\end{align}
Note that $L^{2,\infty}(\Omega)\subset L^{2-\epsilon}(\Omega)$ for all $\epsilon>0$ \cite[Section 6.4]{Folland:1999}, and hence,
$f(x,t)\in L^{2-\epsilon}(\Omega;L^p(I))$ for any $p\in(1,2)$. Next we prove \eqref{eq:1122}. Since
${L^{\frac{2}{p},\infty}(\Omega)}$ is a Banach space with a well-defined norm, we deduce
\begin{align}\label{eq:1234}
\|g\|_{L^{\frac{2}{p},\infty}(\Omega)}=\Big\|\int_I|Gx-h(t)|^{-\frac{pd}{2}}\mathrm{d}t\Big\|_{L^{\frac{2}{p},\infty}(\Omega)}\leq \int_I\Big\||Gx-h(t)|^{-\frac{pd}{2}}\Big\|_{L^{\frac{2}{p},\infty}(\Omega)}\mathrm{d}t.
\end{align}
Let $w(x,t):=|Gx-h(t)|^{-\frac{pd}{2}}$. To estimate $\|g\|_{L^{\frac{2}{p},\infty}
(\Omega)}$, we first compute the nonincreasing rearrangement $w^{*}(\tau,t)$ for
any fixed $t\in I$, which, by definition, is defined for all $\tau\geq 0$ by
\begin{align*}
w^*(\tau,t)&=\inf\{c>0: |\{x\in \Omega: w(x,t)>c\}|\leq\tau\}\\
&=\inf\{c>0: |\{x\in \Omega: |Gx-h(t)|<c^{-\frac{2}{pd}}\}|\leq\tau\}.
\end{align*}
This and the inclusion relation $\{x\in \Omega: |Gx-h(t)|<c^{-\frac{2}{pd}}\}\subset \{x\in \Omega:
|x-G^{-1}h(t)|<\|G^{-1}\|c^{-\frac{2}{pd}}\}$ (due to the trivial inequality $|Gx-h(t)|\geq \|G^{-1}\|^{-1}|x-G^{-1}h(t)|$) yield
\begin{align*}
w^*(\tau,t)\leq \Big(|\mathbb{S}^{d-1}|\|G^{-1}\|^d \tau^{-1} \Big)^{\frac{p}{2}}.
\end{align*}
Hence, for any fixed $t\in I$, we obtain
\begin{align*}
\Big\||Gx-h(t)|^{-\frac{pd}{2}}\Big\|_{L^{\frac{2}{p},\infty}(\Omega)}&=\|w(x,t)\|_{L^{\frac{2}{p},\infty}(\Omega)}\leq \sup\limits_{\tau\geq 0} \tau ^{\frac{p}{2}}\Big(|\mathbb{S}^{d-1}|\|G^{-1}\|^d \tau^{-1} \Big)^{\frac{p}{2}}\\
&=\Big(|\mathbb{S}^{d-1}|\|G^{-1}\|^d \Big)^{\frac{p}{2}},
\end{align*}
which, in view of \eqref{eq:1234}, yields \eqref{eq:1122}. This completes the proof of the lemma.
\end{proof}
Now we can state the degree of ill-posedness for the limit problem for FFP trajectories.
\begin{theorem}\label{thm:limit1}
For $\beta \rightarrow \infty$, if $p\in (L^\infty(\Omega))^d$, $h\in (H^s(I))^d$ with $s>\frac32$,
$d=2,3$, for FFP trajectories, the SVs $\sigma_n$ of the operator
$\widetilde F$ defined in \eqref{eqn:problem-limit-case} decay as {\rm(}for any $\epsilon\in(0,\frac14)${\rm)}:
\begin{equation*}
\sigma_n \leq \left\{\begin{array}{ll}
  Cn^{-1+\epsilon}, & d=3,\\
  Cn^{-\frac{1}{4}+\epsilon}, & d=2.
  \end{array}\right.
\end{equation*}
\end{theorem}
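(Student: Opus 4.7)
The plan is to observe that the limit kernel $\tilde\kappa$ is, up to a bounded prefactor, the function $|Gx-h(t)|^{-1}$, and then to split the analysis according to the spatial dimension. Using the identity
\[
\frac{I_d}{|H|} - \frac{H H^t}{|H|^3} = \frac{1}{|H|}\Bigl(I_d - \frac{H H^t}{|H|^2}\Bigr) = \frac{P_H^\perp}{|H|},
\]
where $P_H^\perp$ is the orthogonal projector onto the subspace perpendicular to $H$, combined with $\|P_H^\perp\|\le 1$ and $\dot H=-\dot h$, the kernel in \eqref{eqn:problem-limit-case} admits the pointwise bound
\[
|\tilde\kappa(x,t)| \le \mu_0 m_0 \|p\|_{L^\infty(\Omega)}\,|\dot h(t)|\,|Gx-h(t)|^{-1}.
\]
Thus $\tilde\kappa$ factors as $|Gx-h(t)|^{-1}$ multiplied by a function bounded on $\Omega\times I$ whose temporal regularity is inherited from $\dot h$.

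For $d=2$, the singular factor coincides exactly with $|Gx-h(t)|^{-d/2}$, and I would invoke Theorem \ref{thm:svd-decay} together with the closing sentence of Remark \ref{rem:conjecture} (which extends the eigenvalue bound under multiplication by a bounded function). This gives $\lambda_n\le Cn^{-1/2+2\epsilon}$ for the eigenvalues of $\widetilde F^*\widetilde F$, so $\sigma_n=\sqrt{\lambda_n}\le Cn^{-1/4+\epsilon}$.

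For $d=3$, the singular factor $|Gx-h(t)|^{-1}$ is strictly milder than $|Gx-h(t)|^{-d/2}=|Gx-h(t)|^{-3/2}$, so additional regularity is available and a sharper rate is expected. Lemma \ref{lem:reg1} with $r=-1$ and $d=3$ gives $|Gx-h(t)|^{-1}\in H^s(I;L^2(\Omega))$ for every $s<1/2$, and the isomorphism $H^s(I;L^2(\Omega))\simeq L^2(\Omega;H^s(I))$ recorded in Section \ref{sec:svd} places the bare singular factor in $L^2(\Omega;H^s(I))$ for every $s<1/2$. Once $\tilde\kappa$ is shown to inherit the same Bochner--Sobolev regularity, Theorem \ref{thm:svd-decay-reg} applied with the smoothness variable being $t\in I\subset\mathbb{R}^1$ yields $\sigma_n\le Cn^{-1/2-s}$, and letting $s\uparrow 1/2$ gives $\sigma_n\le Cn^{-1+\epsilon}$.

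The main obstacle is the three-dimensional case, namely transferring $L^2(\Omega;H^s(I))$ regularity from $|H|^{-1}$ to the full kernel $\tilde\kappa$. The bounded multiplier equals $-\mu_0 m_0 p^t P_H^\perp \dot h$; the factor $\dot h$ lies in $H^{s-1}(I)$ with $s-1>\tfrac12$ by hypothesis on $h$ and is handled benignly via Theorem \ref{thm:product}, but the projector $P_H^\perp$ is only bounded and jumps in $t$ across FFP crossings. My approach is to work with the two-term expansion
\[
\tilde\kappa = -\mu_0 m_0\frac{p^t \dot h}{|H|} + \mu_0 m_0\frac{(p^t H)(H^t \dot h)}{|H|^3},
\]
and treat each term by a direct Sobolev--Slobodecki\v{\i} seminorm estimate in the spirit of the proof of Lemma \ref{lem:reg1}. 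For the first term the smooth scalar multiplier $p^t\dot h$ is independent of the singularity and the claim reduces to the already-established regularity of $|H|^{-1}$. For the second term, the polynomial numerator $(p^t H)(H^t\dot h)$ is $C^\infty$-smooth in both $x$ and $t$ and vanishes to second order on $\{H=0\}$, so two of the three powers of $|H|$ in the denominator are absorbed into this smooth factor, reducing the term once more to a smooth multiplier acting on $|H|^{-1}$ and thereby bypassing any direct need to control the discontinuous direction field $H/|H|$ in $H^s(I;L^2(\Omega))$.
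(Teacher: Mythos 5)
Your argument follows essentially the same route as the paper's proof: the pointwise bound $|\tilde\kappa|\leq \mu_0 m_0|p|\,|\dot h|\,|H|^{-1}$, then for $d=3$ Lemma \ref{lem:reg1} with $r=-1$ giving $|H|^{-1}\in H^{\frac12-\epsilon}(I;L^2(\Omega))$ followed by Theorem \ref{thm:svd-decay-reg} applied over the one-dimensional domain $I$, and for $d=2$ Theorem \ref{thm:svd-decay} together with the bounded-multiplier extension in Remark \ref{rem:conjecture} plus $\sigma_n=\sqrt{\lambda_n}$; the exponents come out identically. The one place where you go beyond the paper is the attempt to transfer the $H^{\frac12-\epsilon}(I;L^2(\Omega))$ regularity from the majorant to $\tilde\kappa$ itself in the $d=3$ case. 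The concern is legitimate (the paper is terse here: it simply applies Theorem \ref{thm:product} to the product $|H|^{-1}|\dot h|$ after the pointwise bound), and your first term $p^t\dot h/|H|$ is handled correctly. However, your fix for the second term overclaims: writing $(p^tH)(H^t\dot h)/|H|^3=\big((p^t\widehat H)(\widehat H^t\dot h)\big)|H|^{-1}$ with $\widehat H=H/|H|$, the bracketed factor is bounded but \emph{not} smooth --- the direction field $\widehat H$ jumps in $t$ at every FFP crossing, which is precisely the discontinuity you said you were bypassing --- so the numerator's second-order vanishing on $\{H=0\}$ does not produce a smooth multiplier, and Theorem \ref{thm:product} cannot be invoked for that term. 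What actually closes this step is the other tool you named but did not carry out: a direct Sobolev--Slobodecki\v{\i} seminorm estimate on $HH^t|H|^{-3}\dot h$ in the spirit of the proof of Lemma \ref{lem:reg1}, using that each entry is $O(|H|^{-1})$ with time derivative $O(|\dot h|\,|H|^{-2})$, which reproduces exactly the integrability computations of that lemma. With that substitution your argument is complete and coincides with the paper's.
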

\begin{proof}
By the Cauchy-Schwarz inequality, there holds $|\tilde{\kappa}| \leq \mu_0m_0 |p||H|^{-1}|\dot{H}|.$
Now we discuss the cases $d=2$ and $d=3$ separately. For $d=3$, by Lemma \ref{lem:reg1},
$|H|^{-1}\in H^{\frac{1}{2}-\epsilon}(I;L^2(\Omega))$, and since $s>\frac32$, by
Sobolev embedding, $\dot H = - \dot h\in (H^{s-1}(I))^3\subset (L^\infty(I))^3$.
By Theorem \ref{thm:product} and $p\in (L^\infty(\Omega))^3$,
we have $| H|^{-1} |\dot{h}|\in H^{\frac{1}{2}-\epsilon}(I;L^2(\Omega))$. Then the assertion
follows from Theorem \ref{thm:svd-decay-reg}. The case $d=2$ is similar: since $
|\dot{h}|\in L^\infty(I)$ and $|p|\in L^\infty(\Omega)$,  we apply Theorem
\ref{thm:svd-decay} to obtain the desired assertion.
\end{proof}

\begin{remark}
Theorem \ref{thm:limit1} indicates that the SVs of the operator
$\widetilde{F}$ decay faster as the spatial dimension $d$ increases from $2$ to $3$ {\rm(}in terms of
the upper bounds{\rm)}. Provided that a certain minimal assumption on $h$ is satisfied,
this result suggests increasing $\beta$ to improve the resolution.
\end{remark}

In practice, one can also have FFL trajectories for $d=3$, where $G\in\mathbb{R}^{3\times 3}$ has
only rank $2$. Then for any fixed $t\in I$, the kernel function $\tilde\kappa(x,t)$ is singular along a line
in $\Omega$, instead of at one single point. We analyze a simplified model to gain insight. Since $\mathrm{rank}(G)
=2$ and symmetric, by properly changing the coordinate, we may assume that $G$ is diagonal with
the last diagonal entry being zero. Then the condition for any fixed $t\in I$, there exists $x\in \Omega$
such that $Gx=h(t)$ implies $h_3(t)=0$, and the singular kernel essentially depends only on $h_1(t)$ and
$h_2(t)$, and the third component of $H(x,t)$ vanishes. Thus for a cylindrical domain $\Omega=\Omega_{12}\times \Omega_3$,
the forward operator $\tilde{F}$ can be reformulated as in the 2D FFP trajectories (with respect to the
average  $\int_{\Omega_3}c(x){\rm d}x_3$ of the concentration $c(x)$):
\begin{align*}
v(t) = - \mu_0m_0\int_{\Omega_{12}} \Big(\int_{\Omega_3}c(x)\d x_3\Big)p(x)^t\left(-\frac{H H^t}{|H|^3} + \frac{1}{|H|} I_3 \right) \dot{H}\d x_1\d x_2.
\end{align*}

The next result analyzes the degree of ill-posedness of FFL trajectories under the designate conditions.
\begin{theorem}\label{thm:limit2}
Under the preceding assumptions, for $\beta \rightarrow \infty$, for FFL trajectories in 3D, $p\in (L^\infty
(\Omega))^3$, and $h(t)\in (H^s(I))^3$ with $s>\frac32$, the SVs $\sigma_n$ of the operator $\widetilde F$
decay as $\sigma_n\le Cn^{-\frac14+\epsilon}$, for any $\epsilon\in(0,\frac14)$.
\end{theorem}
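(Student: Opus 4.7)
The plan is to reduce the three-dimensional FFL problem to a family of two-dimensional FFP-type operators and then invoke the 2D bound of Theorem \ref{thm:limit1} (i.e., Theorem \ref{thm:svd-decay} together with Remark \ref{rem:conjecture}). Using the normal form preceding the theorem, $G$ is diagonal with zero third diagonal entry and $h_3\equiv 0$, so $H(x,t)=g(x)-h(t)$ has vanishing third component, and $H$, $|H|$ and the matrix $M(x,t):=-HH^t/|H|^3+|H|^{-1}I_3$ depend only on $(x_1,x_2,t)$; also $\dot H=-\dot h$ has vanishing third component. The limit kernel thus decomposes as
\[
\tilde\kappa(x,t)=\mu_0m_0\,p(x)^{t}M(x,t)\dot H(t)=\mu_0m_0\sum_{j=1}^{2}p_j(x)\,q_j(x_1,x_2,t),
\]
with $q_j(x_1,x_2,t):=(M(x_1,x_2,t)\dot h(t))_j$ depending only on $(x_1,x_2,t)$.

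This suggests the factorisation $\widetilde F=\mu_0m_0\sum_{j=1}^{2}F_j B_j$, where the $p_j$-weighted slice-averaging operator $B_j:L^2(\Omega)\to L^2(\Omega_{12})$, defined by $(B_jc)(x_1,x_2)=\int_{\Omega_3}c(x_1,x_2,x_3)p_j(x_1,x_2,x_3)\,\d x_3$, is bounded by Cauchy--Schwarz (using $p_j\in L^\infty(\Omega)$ and $|\Omega_3|<\infty$), and $F_j:L^2(\Omega_{12})\to L^2(I)$ is the 2D integral operator with kernel $q_j$. An elementary matrix-norm bound gives $|M(x,t)|\le 2|H|^{-1}$, and since $\dot h\in (L^\infty(I))^3$ (from $s>\tfrac32$ by Sobolev embedding), one can write
\[
q_j(x_1,x_2,t)=|G_{12}x_{12}-h_{12}(t)|^{-1}\,m_j(x_1,x_2,t)\quad\text{with }m_j\in L^\infty(\Omega_{12}\times I),
\]
where $G_{12}\in\mathbb{R}^{2\times 2}$ is the invertible 2D block of $G$ and $h_{12}=(h_1,h_2)$.

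The FFL-covering hypothesis translates directly into Assumption \ref{ass:h} for $(G_{12},h_{12})$ in dimension two (the identity $h_3\equiv 0$ yields $|\dot h_{12}|=|\dot h|$, and $G_{12}^{-1}h_{12}(t)\in\Omega_{12}$ by the FFL-in-$\Omega$ assumption). Theorem \ref{thm:svd-decay} applied in $d=2$ to the kernel $|G_{12}x_{12}-h_{12}(t)|^{-1}$, combined with Remark \ref{rem:conjecture} (multiplication by the bounded function $m_j$), yields $\sigma_n(F_j)\le C n^{-1/4+\epsilon}$. The standard inequality $\sigma_{2n-1}(F_1B_1+F_2B_2)\le\sum_{j=1}^{2}\|B_j\|\,\sigma_n(F_j)$ then produces the claimed decay $\sigma_n(\widetilde F)\le C n^{-1/4+\epsilon}$. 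The main obstacle is not analytical but structural bookkeeping: one must verify that the coordinate normalisation and slice-averaging cleanly factor $\widetilde F$ through a 2D operator for which Assumption \ref{ass:h} holds verbatim, so that the 2D machinery developed in Section \ref{ssec:limit} can be applied without loss.
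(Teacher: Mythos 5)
Your proof is correct and follows essentially the same route as the paper: reduce the 3D FFL kernel, via the normal form with $h_3\equiv 0$, to a 2D FFP-type problem and then apply Theorem \ref{thm:svd-decay} with $d=2$ together with the bounded-multiplier observation of Remark \ref{rem:conjecture}. Your explicit factorization $\widetilde F=\mu_0m_0\sum_{j}F_jB_j$ through the $p_j$-weighted slice averages, plus the Ky Fan-type inequality for singular values of sums, is in fact slightly more careful than the paper's displayed reformulation, which tacitly pulls $p(x)$ out of the $x_3$-integral.
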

\begin{proof}
The proof is similar to Theorem \ref{thm:limit1} with
$d=2$. Since $s>\frac32$, by Sobolev embedding, $\dot H = -\dot h(t)\in (H^{s-1}(I))^3
\hookrightarrow (L^\infty(I))^3$. Since the third component of $H$ vanishes, this and Theorem
\ref{thm:svd-decay} yield the assertion.
\end{proof}

\begin{remark}
\label{rem:conjecture2}
In Theorems \ref{thm:limit1} and \ref{thm:limit2}, the trajectory $h(t)$ is assumed to be $(H^s(I))^d$,
with $s>\frac32$. This restriction comes from the requirement $\dot h(t)\in (L^\infty(I))^d$ to
simplify the analysis. The estimates in Theorem \ref{thm:limit2} and Theorem \ref{thm:limit1} for
$d=2$ are conservative, due to suboptimal bound in Theorem \ref{thm:svd-decay} {\rm(}see Remark \ref{rem:conjecture}{\rm)}.
\end{remark}

\subsection{Filtered model}\label{ssec:filter}
In practice, the signal is first preprocessed by an analog filter to remove
the excitation so that the true signal is not lost during digitalization.
This can be achieved by a band stop filter. Mathematically, it amounts to
convolution with a given kernel $a(t):\bar I := [-T,T]\to\mathbb{R}$ defined by
\begin{equation}\label{eqn:kernel-hat}
\widehat \kappa(x,t) = \int_I \kappa(x,t') a(t-t')\d t'\quad \forall t\in I.
\end{equation}
However, the precise form of the filter $a(t)$ remains elusive, which currently constitutes one of the major challenges
in realistic mathematical modeling of MPI \cite{Kluth:2017}. To analyze the influence of the filtering step,
we recall a smoothing property of the convolution operator
\cite[Theorem 3]{Burenkov:1989}. Below the notation $(\cdot)_+$ denotes the positive part. We refer to
\cite{Triebel:1978} for a treatise on Besov spaces $B_{p,\theta}^s(\mathbb{R})$.
\begin{lemma}\label{lem:conv}
For $-\infty<\ell_1,\ell_2<\infty$, $1\leq p_1\leq p_2 \leq \infty$, $0<\theta_1,\theta_2\leq \infty$,
with $\frac{1}{p} = \frac{1}{p_1'} + \frac{1}{p_2}$ and $\frac{1}{\theta}\geq (\frac{1}{\theta_2}-\frac{1}{\theta_1})_+$, then for $f\in B_{p,\theta}^{
\ell_2-\ell_1}(\mathbb{R})$ and $g\in B_{p_1,\theta_1}^{\ell_1}(\mathbb{R})$, the convolution $f\ast g$
exists and
\begin{equation*}
\|f\ast g\|_{B_{p_2,\theta_2}^{\ell_2}(\mathbb{R})} \leq C\|f\|_{B_{p,\theta}^{\ell_2-\ell_1}(\mathbb{R})}
\|g\|_{B_{p_1,\theta_1}^{\ell_1}(\mathbb{R})}.
\end{equation*}
\end{lemma}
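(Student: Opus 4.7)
The plan is to reduce the estimate to Young's convolution inequality applied on both the physical and frequency sides, using the Littlewood--Paley characterization of Besov spaces. I would start by recalling the equivalent norm $\|u\|_{B^s_{p,\theta}(\mathbb{R})} \simeq \bigl(\sum_{j\ge 0}2^{js\theta}\|\Delta_j u\|_{L^p}^{\theta}\bigr)^{1/\theta}$, where $\{\Delta_j\}_{j\ge 0}$ denotes the dyadic frequency projections with $\widehat{\Delta_j u}$ supported in a dyadic annulus of scale $2^j$. Writing $f=\sum_j \Delta_j f$ and $g=\sum_k \Delta_k g$, the convolution decomposes as $f*g=\sum_{j,k}(\Delta_j f)*(\Delta_k g)$, and each summand has a Fourier support controlled by the supports of $\widehat{\Delta_j f}$ and $\widehat{\Delta_k g}$.

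The second step is Young's convolution inequality at the physical level. The hypothesis $\frac{1}{p}=\frac{1}{p_1'}+\frac{1}{p_2}$ is equivalent to $\frac{1}{p_2}+1=\frac{1}{p}+\frac{1}{p_1}$, which gives
\begin{equation*}
\|(\Delta_j f)*(\Delta_k g)\|_{L^{p_2}} \le \|\Delta_j f\|_{L^p}\|\Delta_k g\|_{L^{p_1}}.
\end{equation*}
I would then introduce the weighted sequences $\alpha_j := 2^{j(\ell_2-\ell_1)}\|\Delta_j f\|_{L^p}$ and $\beta_k := 2^{k\ell_1}\|\Delta_k g\|_{L^{p_1}}$, which lie respectively in $\ell^{\theta}$ and $\ell^{\theta_1}$ with norms equivalent to $\|f\|_{B^{\ell_2-\ell_1}_{p,\theta}(\mathbb{R})}$ and $\|g\|_{B^{\ell_1}_{p_1,\theta_1}(\mathbb{R})}$.

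The third step is to estimate $2^{n\ell_2}\|\Delta_n(f*g)\|_{L^{p_2}}$ by a near-diagonal convolution of $\alpha$ and $\beta$. I would invoke Bony's paraproduct decomposition, splitting the sum $\sum_{j,k}(\Delta_j f)*(\Delta_k g)$ into the three interactions (low--high, high--low, high--high). In the first two the Fourier support of $(\Delta_j f)*(\Delta_k g)$ is concentrated near scale $\max(j,k)$, so only finitely many $(j,k)$ contribute to $\Delta_n(f*g)$; in the high--high region one sums over scales $\ge n$ and uses the Bernstein inequality to trade frequency localization for integrability. In each region the resulting weighted sum reduces to a discrete convolution $\alpha*\beta$, and the hypothesis $\frac{1}{\theta}\ge\bigl(\frac{1}{\theta_2}-\frac{1}{\theta_1}\bigr)_+$, which rearranges to $\frac{1}{\theta_2}+1\ge\frac{1}{\theta}+\frac{1}{\theta_1}$, is exactly the index condition of Young's inequality on sequence spaces, yielding $\|\alpha*\beta\|_{\ell^{\theta_2}}\le \|\alpha\|_{\ell^{\theta}}\|\beta\|_{\ell^{\theta_1}}$.

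The main obstacle is the careful bookkeeping in the paraproduct step, especially for the high--high interaction: one must control the cascading of high-frequency contributions down to scale $n$ and verify that the summability guaranteed by $\ell_1$ and $\ell_2-\ell_1$ is enough to absorb the resulting geometric series, which is precisely where the sharpness of the index conditions enters. Existence of $f*g$ as a tempered distribution is a preliminary issue, but it follows a posteriori from the absolute convergence of the blockwise expansion in $L^{p_2}$ furnished by the above chain of estimates. Since the statement is quoted verbatim from \cite[Theorem 3]{Burenkov:1989}, I would in practice refer the reader to that source for the full technical details.
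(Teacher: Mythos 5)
The paper offers no proof of this lemma: it is quoted verbatim from Burenkov \cite[Theorem 3]{Burenkov:1989}, so there is no in-paper argument to compare against. Your Littlewood--Paley strategy is the natural one, and your first two steps are sound: the dyadic decomposition, and Young's inequality $\|(\Delta_j f)\ast(\Delta_k g)\|_{L^{p_2}}\le\|\Delta_j f\|_{L^{p}}\|\Delta_k g\|_{L^{p_1}}$ under $\frac{1}{p}+\frac{1}{p_1}=1+\frac{1}{p_2}$, which is indeed equivalent to the stated condition $\frac{1}{p}=\frac{1}{p_1'}+\frac{1}{p_2}$.

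The gap is in your third step. Bony's trichotomy is the right tool for pointwise products, where $\mathrm{supp}\,\widehat{uv}\subset\mathrm{supp}\,\widehat{u}+\mathrm{supp}\,\widehat{v}$; for a convolution one has instead $\widehat{(\Delta_j f)\ast(\Delta_k g)}=\widehat{\Delta_j f}\cdot\widehat{\Delta_k g}$, whose support is the \emph{intersection} of the two dyadic annuli. Hence $(\Delta_j f)\ast(\Delta_k g)=0$ unless $|j-k|\le 1$: the interaction is purely diagonal, there are no low--high/high--low/high--high regimes, no cascade of high frequencies down to scale $n$, and no Bernstein inequality is needed. Consequently $2^{n\ell_2}\|\Delta_n(f\ast g)\|_{L^{p_2}}\lesssim\sum_{|j-n|\le c}\alpha_j\beta_j$, and the sequence-space step is H\"older's inequality for the \emph{pointwise} product, $\ell^{\theta}\cdot\ell^{\theta_1}\hookrightarrow\ell^{\theta_2}$ under $\frac{1}{\theta}+\frac{1}{\theta_1}\ge\frac{1}{\theta_2}$, which is exactly the stated hypothesis $\frac{1}{\theta}\ge(\frac{1}{\theta_2}-\frac{1}{\theta_1})_+$. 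Your rearrangement of that hypothesis into the Young form $\frac{1}{\theta_2}+1\ge\frac{1}{\theta}+\frac{1}{\theta_1}$ is algebraically incorrect, and an argument that genuinely invoked Young's inequality on sequence spaces would require $\frac{1}{\theta}+\frac{1}{\theta_1}\ge 1+\frac{1}{\theta_2}$, which fails on admissible index choices such as $\theta=\theta_1=\theta_2=\infty$; so step three as written does not cover the full range of the lemma. Once the diagonal structure is recognized the proof closes and is in fact shorter than what you describe; alternatively, deferring entirely to Burenkov, as the paper itself does, is legitimate.
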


Last, we describe the influence of filtering: the SVs $\sigma_n$ of the filtered model decay
faster than the nonfiltered one by a factor $r$, the regularity index of the filter.
\begin{theorem}\label{thm:filter}
Suppose that the zero extension of the filter $a(t):\bar I\to\mathbb{R}$ belongs to $B_{1,\theta}^r(\mathbb{R})$,
for some $r\geq0$ and $0<\theta\leq \infty$, and the conditions in Theorem \ref{thm:non-filter} hold. Then the
SVs $\sigma_n$ of the operator $\widehat F$ for the kernel $\widehat \kappa(x,t)$
defined in \eqref{eqn:kernel-hat} decay as $\sigma_n\leq Cn^{\frac12-s-r}$.
\end{theorem}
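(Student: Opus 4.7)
The strategy is to apply Theorem~\ref{thm:svd-decay-reg} to the filtered kernel $\widehat\kappa(x,t)$, viewed as an element of the Bochner--Sobolev space $L^2(\Omega;H^{s-1+r}(I))$. With $D=I$ (so $d=1$) and smoothness index $s-1+r$, Theorem~\ref{thm:svd-decay-reg} will immediately deliver
\begin{equation*}
  \sigma_n \le C\,\|\widehat\kappa\|_{L^2(\Omega;H^{s-1+r}(I))}\,n^{-\frac12-(s-1+r)} = C\,n^{\frac12-s-r},
\end{equation*}
matching the claim. The entire problem thus reduces to quantifying the regularity boost obtained from convolution with the filter $a$.

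The baseline regularity $\kappa\in H^{s-1}(I;L^\infty(\Omega))\subset L^2(\Omega;H^{s-1}(I))$ is already established in the proof of Theorem~\ref{thm:non-filter} (cf.\ \eqref{eqn:kernel}). I would apply Lemma~\ref{lem:conv} pointwise in $x\in\Omega$ to the temporal convolution $\widehat\kappa(x,\cdot)=a\ast\kappa(x,\cdot)$, choosing parameters $p_2=\theta_2=2$, $\ell_2=s-1+r$, $\ell_1=s-1$, and $p_1=\theta_1=2$. The constraint $\tfrac1p=\tfrac1{p_1'}+\tfrac1{p_2}=\tfrac12+\tfrac12$ then forces $p=1$, which matches exactly the hypothesis $a\in B^r_{1,\theta}(\mathbb R)$, while $\tfrac1\theta\ge(\tfrac1{\theta_2}-\tfrac1{\theta_1})_+=0$ holds automatically. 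Lemma~\ref{lem:conv} then yields the pointwise estimate
\begin{equation*}
  \|\widehat\kappa(x,\cdot)\|_{H^{s-1+r}(\mathbb R)} \le C\,\|a\|_{B^r_{1,\theta}(\mathbb R)}\,\|\kappa(x,\cdot)\|_{H^{s-1}(\mathbb R)},
\end{equation*}
and squaring and integrating in $x$ over $\Omega$, together with the Bochner isomorphism, delivers the required bound on $\|\widehat\kappa\|_{L^2(\Omega;H^{s-1+r}(I))}$, from which Theorem~\ref{thm:svd-decay-reg} closes the argument.

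The main technical obstacle is the passage from $I$ to $\mathbb R$ needed to invoke Lemma~\ref{lem:conv}. The zero extension of $a$ lies in $B^r_{1,\theta}(\mathbb R)$ by assumption, so the subtlety concerns only $\kappa(x,\cdot)$. I would employ a bounded Sobolev extension operator $E:H^{s-1}(I)\to H^{s-1}(\mathbb R)$ applied pointwise in $x$; since the zero extension $\tilde a$ is supported in $[-T,T]$ while $E\kappa(x,\cdot)$ does not vanish outside $I$, the restriction $(E\kappa(x,\cdot)\ast\tilde a)|_I$ does not coincide with $\widehat\kappa(x,\cdot)$ but differs by a boundary leakage term of the form $\int_{\mathbb R\setminus I}E\kappa(x,t')\,\tilde a(t-t')\,\d t'$, localized within distance $T$ of $\partial I$. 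After the decomposition $E\kappa=\kappa\chi_I+(E\kappa)\chi_{\mathbb R\setminus I}$, the same Besov convolution estimate controls this remainder in $H^{s-1+r}(I)$ with a bound compatible with $\|\kappa\|_{L^2(\Omega;H^{s-1}(I))}\|a\|_{B^r_{1,\theta}(\mathbb R)}$, so the pointwise-in-$x$ estimate transfers cleanly to $\widehat\kappa$ itself and the proof concludes.
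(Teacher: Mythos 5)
Your proposal is, in its main line, the paper's own proof: the paper likewise takes a bounded extension $\bar\kappa$ of $\kappa$ to $H^{s-1}(\mathbb{R};L^2(\Omega))$, the zero extension $\bar a$ of $a$, applies Lemma \ref{lem:conv} with exactly your parameter choice ($p_1=\theta_1=p_2=\theta_2=2$, hence $p=1$, matching $a\in B^r_{1,\theta}(\mathbb{R})$) to get $\|\bar\kappa\ast\bar a\|_{H^{s+r-1}(\mathbb{R};L^2(\Omega))}\leq C\|\kappa\|_{H^{s-1}(I;L^2(\Omega))}\|\bar a\|_{B^r_{1,\theta}(\mathbb{R})}$, and closes with Theorem \ref{thm:svd-decay-reg}; your pointwise-in-$x$ formulation is equivalent via the isomorphism $H^{\sigma}(I;L^2(\Omega))\simeq L^2(\Omega;H^{\sigma}(I))$ stated in Section 3.1. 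So the approach is the same and the core estimate is fine.

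The one place you diverge is the extension/restriction mismatch, which you rightly flag (the paper simply asserts that the restriction of $\bar\kappa\ast\bar a$ to $I$ ``clearly coincides'' with \eqref{eqn:kernel-hat}, which holds only if $\kappa$ is extended by zero). However, your proposed patch does not close as written. Writing $E\kappa=\kappa\chi_I+(E\kappa)\chi_{\mathbb{R}\setminus I}$, the leakage term is $\bigl(((E\kappa)\chi_{\mathbb{R}\setminus I})\ast\bar a\bigr)\big|_I$, and the function $(E\kappa)\chi_{\mathbb{R}\setminus I}$ generically has jump discontinuities at $\partial I$, so it lies in $H^{\sigma}(\mathbb{R})$ only for $\sigma<\tfrac12$. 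Lemma \ref{lem:conv} then controls its convolution with $\bar a$ only in $H^{\sigma+r}$, which falls short of the required $H^{s-1+r}$ whenever $s>\tfrac32$ (the regime of smooth trajectories, which is precisely where the theorem is of interest); it is not ``the same Besov convolution estimate with a compatible bound.'' To genuinely repair this one either works with the zero extension $\kappa\chi_I$ throughout (which requires $s-1<\tfrac12$ or boundary compatibility of $\kappa$), or exploits that the leakage term is supported near $\partial I$ and is as smooth as $\bar a$ allows in the interior — neither of which is what you wrote. In short: same proof as the paper, and your extra care exposes a real gap in both arguments, but the remainder estimate you sketch does not yet fill it.
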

\begin{proof}
Let $\bar \kappa$ be any bounded extension of $\kappa$ from $H^{s-1}(I;L^2(\Omega))$ to $H^{s-1}(\mathbb{R};L^2(\Omega))$,
and denote by $\bar a$ the zero extension of $a:\bar I\to \mathbb{R}$ to $\mathbb{R}\setminus\bar{I}$. Then we can extend $\widehat \kappa$
from $I$ to $\mathbb{R}$, still denoted by $\widehat \kappa$, by $\widehat\kappa(x,t)=\int_\mathbb{R} \bar \kappa(x,t')
\bar a(t-t')\d t'.$ Clearly, the restriction of $\widehat\kappa(x,t)$ to $I$ coincides with $\widehat \kappa$ defined
in \eqref{eqn:kernel-hat}, by the construction of the extension $\bar a$. This and the fact $H^{s}(\mathbb{R})=B_{2,2}^s(\mathbb{R})$
\cite[Remark 4, p. 179]{Triebel:1978} imply
\begin{align*}
  \|\widehat\kappa\|_{H^{s+r-1}(I;L^2(\Omega))}&=\|\bar\kappa\ast \bar{a}\|_{H^{s+r-1}(I;L^2(\Omega))} \leq \|\bar \kappa\ast \bar{a}\|_{H^{s+r-1}(\mathbb{R};L^2(\Omega))}\\
   &\leq C\|\bar\kappa\|_{H^{s-1}(\mathbb{R};L^2(\Omega))}\|\bar a\|_{B_{1,\theta}^r(\mathbb{R})}\leq
   C\|\kappa\|_{H^{s-1}(I;L^2(\Omega))}\|\bar a\|_{B_{1,\theta}^r(\mathbb{R})},
\end{align*}
where the second inequality is due to Lemma \ref{lem:conv} and the last one due to the bounded extension.
Then the assertion follows from Theorem \ref{thm:svd-decay-reg}.
\end{proof}

\section{Numerical results}\label{sec:numer}
Now we illustrate the theoretical results with numerical examples for the non-filtered
FFP and FFL cases. We do not study the influence of analog filter, since its mechanism and
precise form are still poorly understood.

\subsection{Setting of numerical experiments}
In our numerical simulation, we use parameters that are comparable with real experiments. We parameterize
problems \eqref{eqn:non-filtered} and \eqref{eqn:problem-limit-case} analogously to the Bruker FFP scanner, and
obtain the parameter values from a public dataset \cite{knopp2016mdf}. Sinusoidal excitation patterns are
used to move the FFP along Lissajous trajectories, which are often employed in practice due to its fast
coverage of the domain of interest. In the FFP case, the gradient field $g:\R^3 \rightarrow \R^3$
is taken to be linear, i.e., $g(x)= Gx$ with $G\in \R^{3\times 3}$ diagonal and $\mathrm{trace}(G)=0$.
The drive field $h:I\rightarrow \R^3$ is taken to be trigonometric, i.e. $h(t)=-(A_1 \sin
(2\pi f_1 t),A_2 \sin(2\pi f_2 t),A_3 \sin(2\pi f_3 t))^t$, $A_i,f_i>0$, $i=1,2,3$. By Theorem
\ref{thm:non-filter2}, for $0<\beta<\infty$, the good spatial regularity of the kernel $\kappa(x,t)$
precludes examining the influence of trajectory smoothness on the SV decay. Nonetheless,
we consider also triangular trajectories, i.e., $h(t)=-(A_1 \mathrm{tri}(2\pi f_1 t),A_2 \mathrm{tri}
(2\pi f_2 t),A_3 \mathrm{tri}(2\pi f_3 t))^t$, $A_i,f_i>0$, $i=1,2,3$, where the function
$\mathrm{tri}:\R \rightarrow [-1,1]$ is defined by
\begin{equation*}
 \mathrm{tri}(z)=\begin{cases}
                  \frac{2}{\pi} \theta & \theta=(z \textrm{ mod } 2\pi) \in [0, \pi/2),\\
                  2- \frac{2}{\pi} \theta & \theta=(z \textrm{ mod } 2\pi) \in [\pi/2, 3\pi/2),\\
                  -4 +\frac{2}{\pi} \theta & \theta=(z \textrm{ mod } 2\pi) \in [3\pi/2, \pi/2).\\
                 \end{cases}
\end{equation*}
We refer to Fig. \ref{fig:Lissajous2D} for an illustration of Lissajous excitation with sinusoidal and triangular trajectories.

\begin{figure}[hbt!]
\centering
 	\begin{subfigure}[t]{0.35\textwidth}
 		\includegraphics[width=\textwidth,trim={1cm 0 4.2cm 1cm},clip]{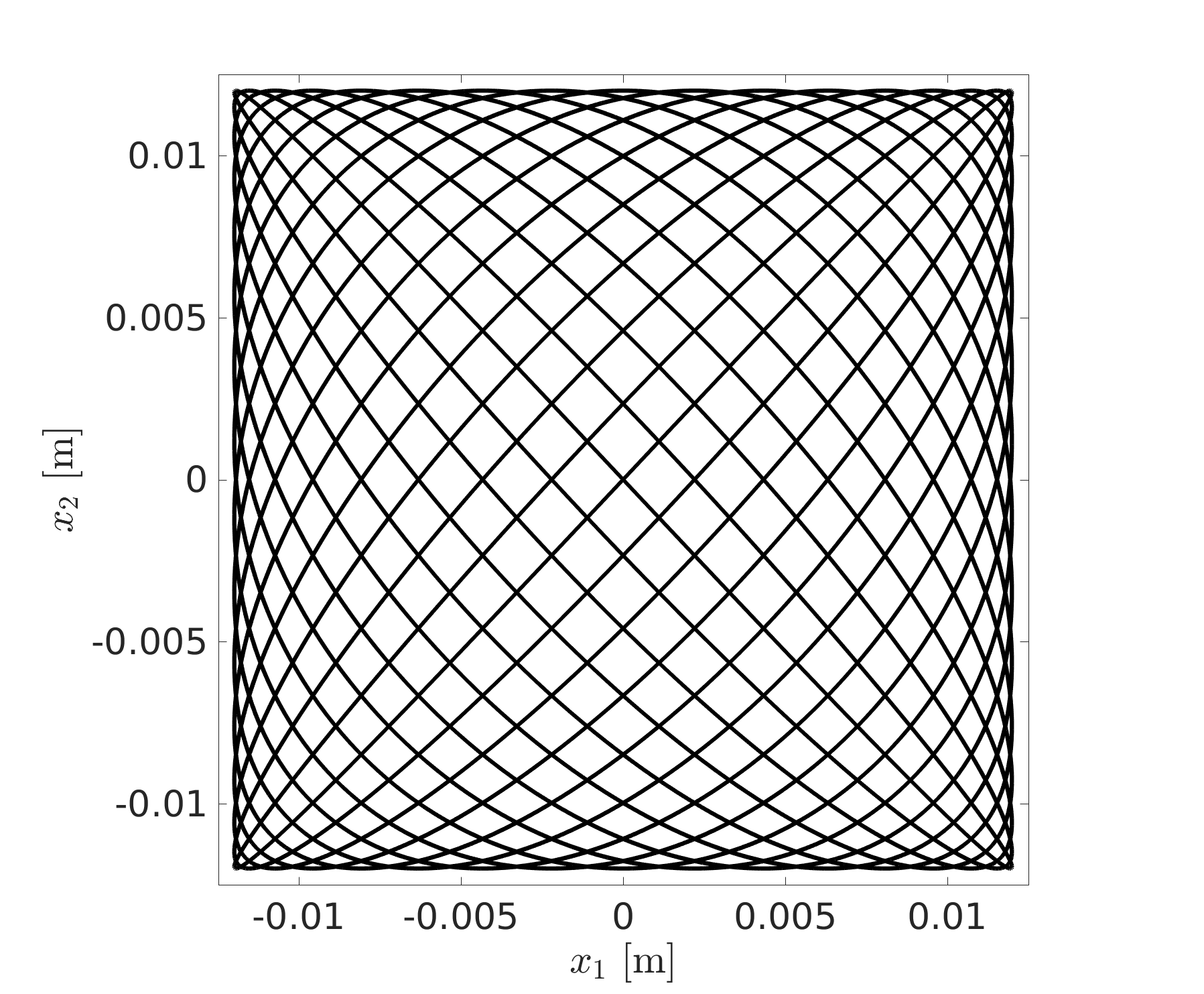}
		\caption{sinusoidal}
	\end{subfigure}
    \begin{subfigure}[t]{0.35\textwidth}
 		\includegraphics[width=\textwidth,trim={1cm 0 4.2cm 1cm},clip]{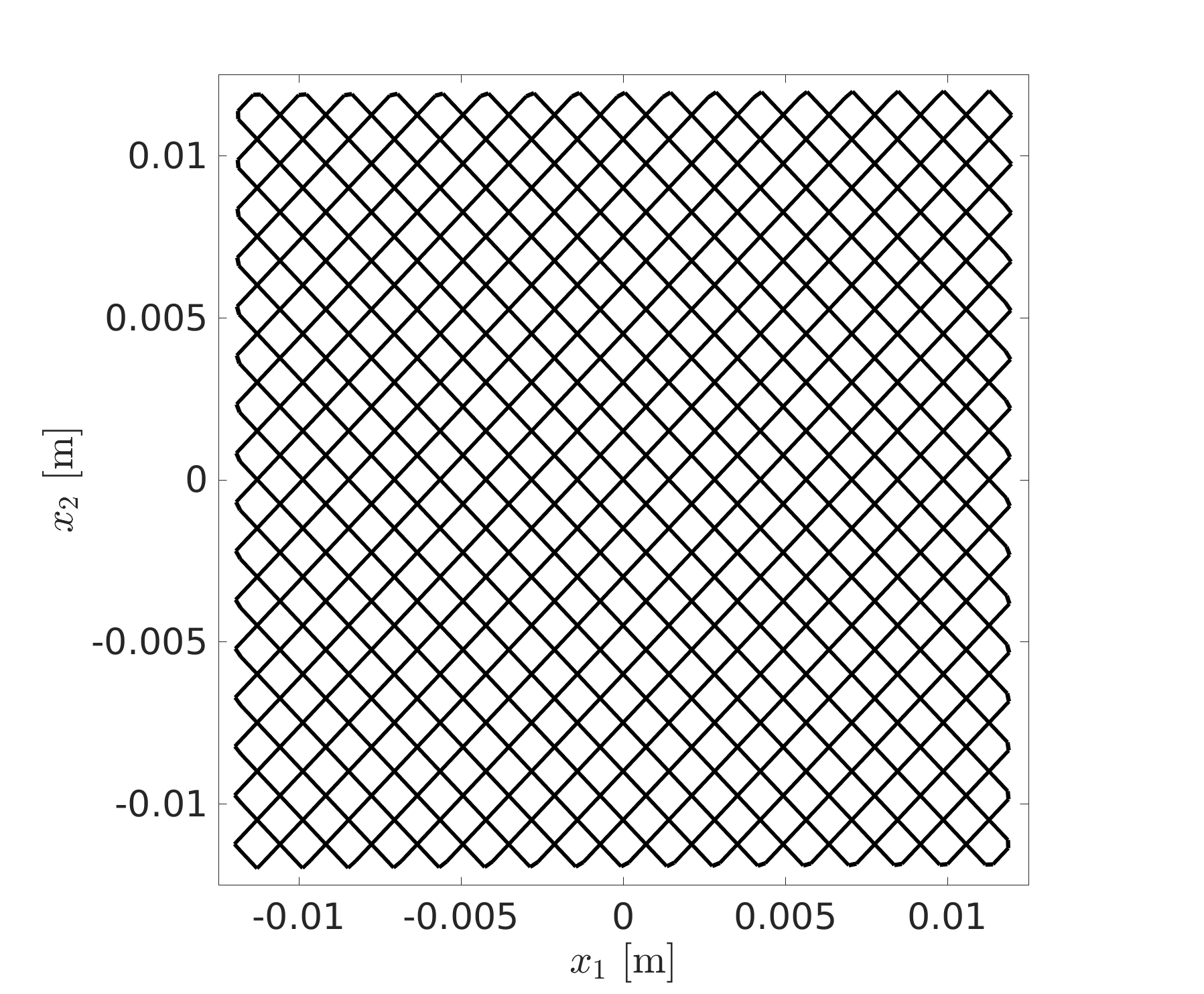}
		\caption{triangular}
	\end{subfigure}	
 \caption{FFP trajecories Lissajous curve for sinusoidal (a) and triangular (b) excitation in 2D.}
 \label{fig:Lissajous2D}
\end{figure}

The scanner is equipped with three receive coil units, each sensitive to one component of the 
mean magnetic moment vector $\bar m(x,t)$. The coil sensitivities $p$ are assumed to be
homogeneous in the simulation. 
The particle's magnetic moment $m_0$
is set to $m_0=\mu_0^{-1}$ in order to avoid diameter-dependent numerical errors, and the remaining
particle parameters are chosen following \cite{Deissler2014}.

The forward operators $F$, $\tilde{F}: L^2(\Omega) \rightarrow (L^2(I))^L$ are discretized by a
Galerkin method with piecewise constant basis functions on a rectangular partition of the spatial
domain $\Omega$ and  uniform partition of the time interval $I$. The integrals are computed using
a quasi Monte-Carlo quadrature rule in the spatial variable $x$, with a Halton sequence with $3^d$
points, for which a good accuracy was reported  \cite{morokoff1995quasi}. It can also approximate
singular integrals \cite{owen2006quasi} arising in the limit case. The time integral is computed
by a Gaussian quadrature rule, and interval splitting is performed if necessary such that discontinuities
lie at the interval boundaries only. MATLAB's built-in function \texttt{svd} is used to compute SVs of the discrete
model. Note that in practice, the computed small SVs are polluted by discretization
errors and quadrature errors etc, and thus the discussions below focus on leading SVs, although the
theoretical results hold for all $n$.

\begin{table}[hbt!]
\centering
\caption{Physical parameters used for the simulations. The parameters can be found in: FFP scanner
setup \cite{knopp2016mdf} and particle parameters \cite{Deissler2014}.}
\label{tab:sim-parameters}
{\scriptsize
 \begin{tabular}{ll|l|l|l}
 \midrule
 \textbf{Parameter} & & \multicolumn{3}{l}{\textbf{Value}}\\
 \midrule
 Magnetic permeability & $\mu_0$ & \multicolumn{3}{l}{$4\pi\times 10^{-7} \text{ H/m}$}\\
 Boltzmann constant & $k_\mathrm{B}$ & \multicolumn{3}{l}{$1.38064852\times 10^{-23} \text{ J/K}$}\\

\midrule
\multicolumn{5}{l}{{\it Particle}} \\
\midrule
  Temperature & $T_\textrm{B}$ & \multicolumn{3}{l}{$293 \text{ K}$} \\
  Sat. magnetization & $M_\mathrm{S}$ & \multicolumn{3}{l}{$474000 \text{ J/m$^3$/T}$}\\
  Particle core diameter & $D$ & \multicolumn{3}{l}{$\in \{20,30,40\} \times 10^{-9} \text{ m}$}  \\
  Particle core volume & $V_\mathrm{C}$ & \multicolumn{3}{l}{$1/6 \pi D^3$} \\
   & $m_0$ & \multicolumn{3}{l}{$\frac{1}{\mu_0}$} \\
   & $\beta$ & \multicolumn{3}{l}{$\frac{\mu_0V_\mathrm{C}M_\mathrm{S}}{k_\mathrm{B}T_\textrm{B}}$} \\
\midrule
\multicolumn{1}{l}{{\it Geometry }} & $d$ & 1 & 2 & 3\\
\midrule
FOV & $e_1$ & $[-12.5,12.5]  \text{ mm}$ & $[-12.5,12.5]  \text{ mm}$& $[-12.5,12.5]  \text{ mm}$\\
 & $e_2$ & $-$ & $[-12.5,12.5]  \text{ mm}$& $[-12.5,12.5]  \text{ mm}$\\
 & $e_3$ & $-$ & $-$& $[-6.5,6.5]  \text{ mm}$\\
Cuboid size & $\Delta x$ & $0.1\text{ mm}$ & $0.5 \times 0.5 \text{ mm}^2$& $1.0 \times 1.0 \times 1.0  \text{ mm}^3$\\
\midrule
\multicolumn{5}{l}{{\it Scanner FFP case }}  \\
\midrule
 Excitation frequencies & $f_1$& $2.5/102 \times 10^6 \text{ Hz}$ & $2.5/102 \times 10^6 \text{ Hz}$ & $2.5/102 \times 10^6 \text{ Hz}$\\
  & $f_2$& $-$ & $2.5/96 \times 10^6 \text{ Hz}$ & $2.5/96 \times 10^6 \text{ Hz}$\\
 & $f_3$& $-$ & $-$ & $2.5/99 \times 10^6 \text{ Hz}$\\
 Excitation amplitudes & $A_1$& $0.012\text{ T}/\mu_0$ & $0.012\text{ T}/\mu_0$& $0.012\text{ T}/\mu_0$\\
  & $A_2$& $-$ &$0.012\text{ T}/\mu_0$ & $0.012\text{ T}/\mu_0$\\
  & $A_3$& $-$ & $-$ & $0.012\text{ T}/\mu_0$\\

 Gradient strength & $G_{1,1}$ & $-1\text{ T/m}/\mu_0$ & $-1\text{ T/m}/\mu_0$ & $-1\text{ T/m}/\mu_0$\\
 & $G_{2,2}$ & $-1\text{ T/m}/\mu_0$ & $-1\text{ T/m}/\mu_0$ & $-1\text{ T/m}/\mu_0$\\
 & $G_{3,3}$ & $2\text{ T/m}/\mu_0$ & $2\text{ T/m}/\mu_0$ & $2\text{ T/m}/\mu_0$\\
Measurement time & $T$ & $0.04 \times 10^{-3} \text{ s}$ & $0.653 \times 10^{-3} \text{ s}$ & $21.54 \times 10^{-3} \text{ s}$\\
  & $\Delta t$ &$0.01 \times 10^{-6} \text{ s}$ & $0.2 \times 10^{-6} \text{ s}$ & $0.4 \times 10^{-6} \text{ s}$  \\
\midrule
\multicolumn{5}{l}{{\it Scanner FFL case}}  \\
\midrule
 Excitation frequencies & $f_1$& $-$ & $-$ & $-$\\
  & $f_2$& $-$ & $2.5/96 \times 10^6 \text{ Hz}$ & $2.5/96 \times 10^6 \text{ Hz}$\\
 & $f_3$& $-$ & $-$ & $2.5/96/25/20 \times 10^6 \text{ Hz}$\\
 Field rotation & $f_\textrm{rot}$& $-$ & $2604.17 \text{ Hz}$ & $2604.17 \text{ Hz}$\\
 Excitation amplitudes & $A_1$& $-$ & $-$& $-$\\
  & $A_2$& $-$ & $0.012\text{ T}/\mu_0$ & $0.012\text{ T}/\mu_0$\\
  & $A_3$& $-$ & $-$ & $0.06\text{ T}/\mu_0$\\

 Gradient strength & $G_{1,1}$ & $-$ & $0\text{ T/m}/\mu_0$ & $0\text{ T/m}/\mu_0$\\
 & $G_{2,2}$ & $-$ & $-1\text{ T/m}/\mu_0$ & $-1\text{ T/m}/\mu_0$\\
 & $G_{3,3}$ & $-$ & $1\text{ T/m}/\mu_0$ & $1\text{ T/m}/\mu_0$\\
Measurement time & $T$ & $-$ & $0.77 \times 10^{-3} \text{ s}$ & $19.2 \times 10^{-3} \text{ s}$\\
& $\Delta t$ & $-$ & $0.2 \times 10^{-6} \text{ s}$ & $0.4 \times 10^{-6} \text{ s}$  \\
\midrule
\end{tabular}
}
\end{table}

\subsection{Numerical results and discussions}

First, we compare the SV decays for the FFP case using sinusoidal excitation patterns
moving the FFP along a Lissajous trajectory in the 2D and 3D cases. In all cases, for $d=1,2,3$,
the first $d$ receive coils were used to compute the SVs. The parameters used in the numerical
simulation are summarized in Table \ref{tab:sim-parameters}. In all the figures,
the whole range of the SVs of the discrete problem is presented, where the small SVs
are not reliable. The FFP results including
the limit cases (2D/3D) are presented in Fig. \ref{fig:sv_FFP_sin},
where for the purpose of comparison, the reference decay rates $O(n^{-\frac12})$ and $O(n^{-1})$
are also shown. It is observed that as the particle diameter $D$ increases, the
decay rate approaches the theoretical one from Theorem \ref{thm:limit1}, and for small $D$,
the decay is exponential. The numerical results for the 2D FFP limit case agree well with
the predictions from Theorem \ref{thm:limit1} and the conjecture in Remark \ref{rem:conjecture}.
 In the 3D FFP limit case, the decay of the leading SVs is slightly slower than the
theoretical rate in Theorem \ref{thm:limit1}. Similar observations hold for triangular
excitations in the drive field and when using one single receive coil only. These results can be
found in Figs. \ref{fig:sv_FFP_tri} and \ref{fig:sv_FFP_sin_single}.

\begin{figure}[hbt!]
\centering
 	\begin{subfigure}[t]{0.32\textwidth}
 		\includegraphics[width=\textwidth,trim={1.4cm 0 4.8cm 0},clip]{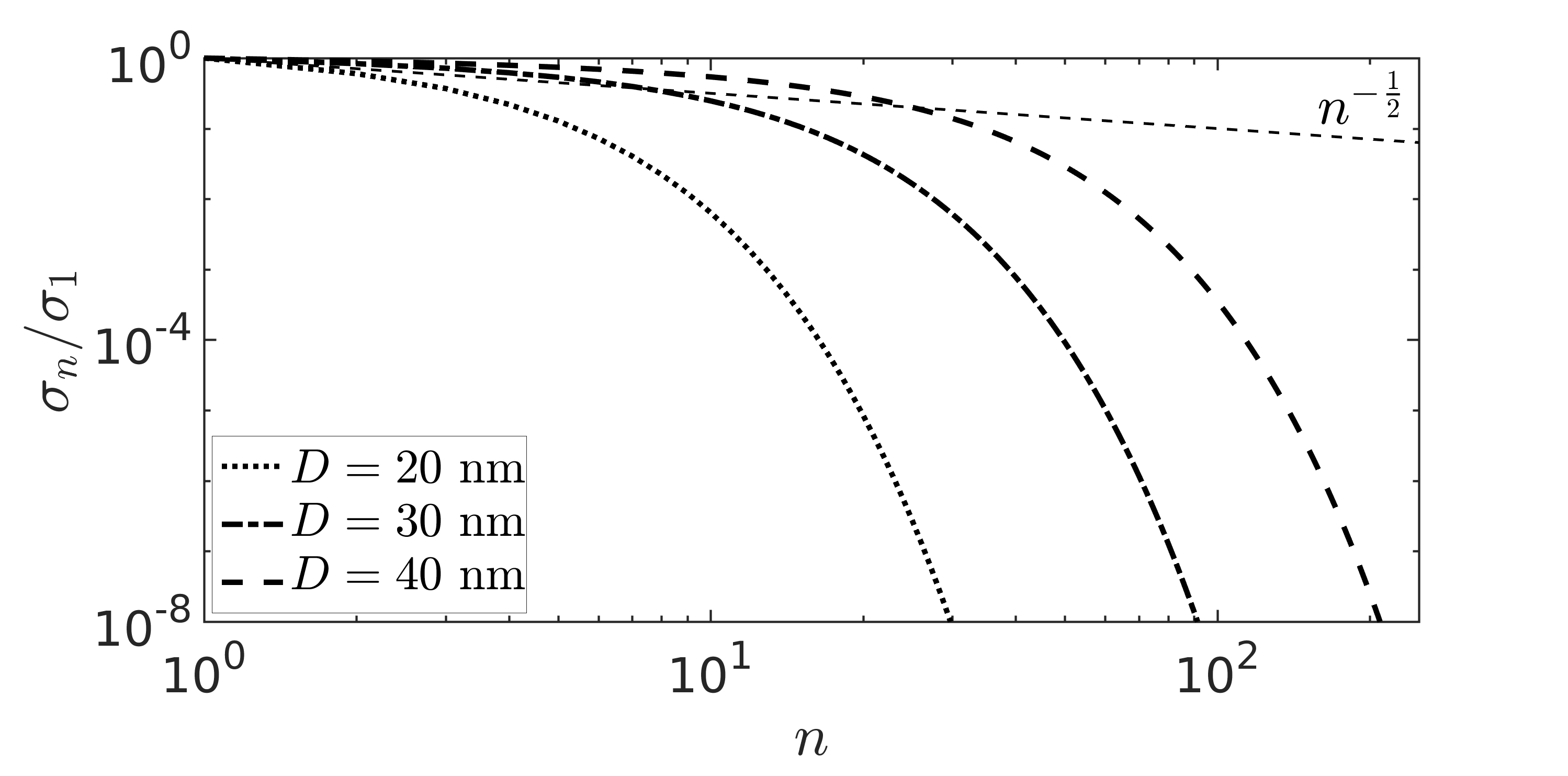}
		\caption{FFP 1D  }
		\label{subfig1:sv_FFP_sin}
	\end{subfigure}
    \begin{subfigure}[t]{0.32\textwidth}
 		\includegraphics[width=\textwidth,trim={1.4cm 0 4.8cm 0},clip]{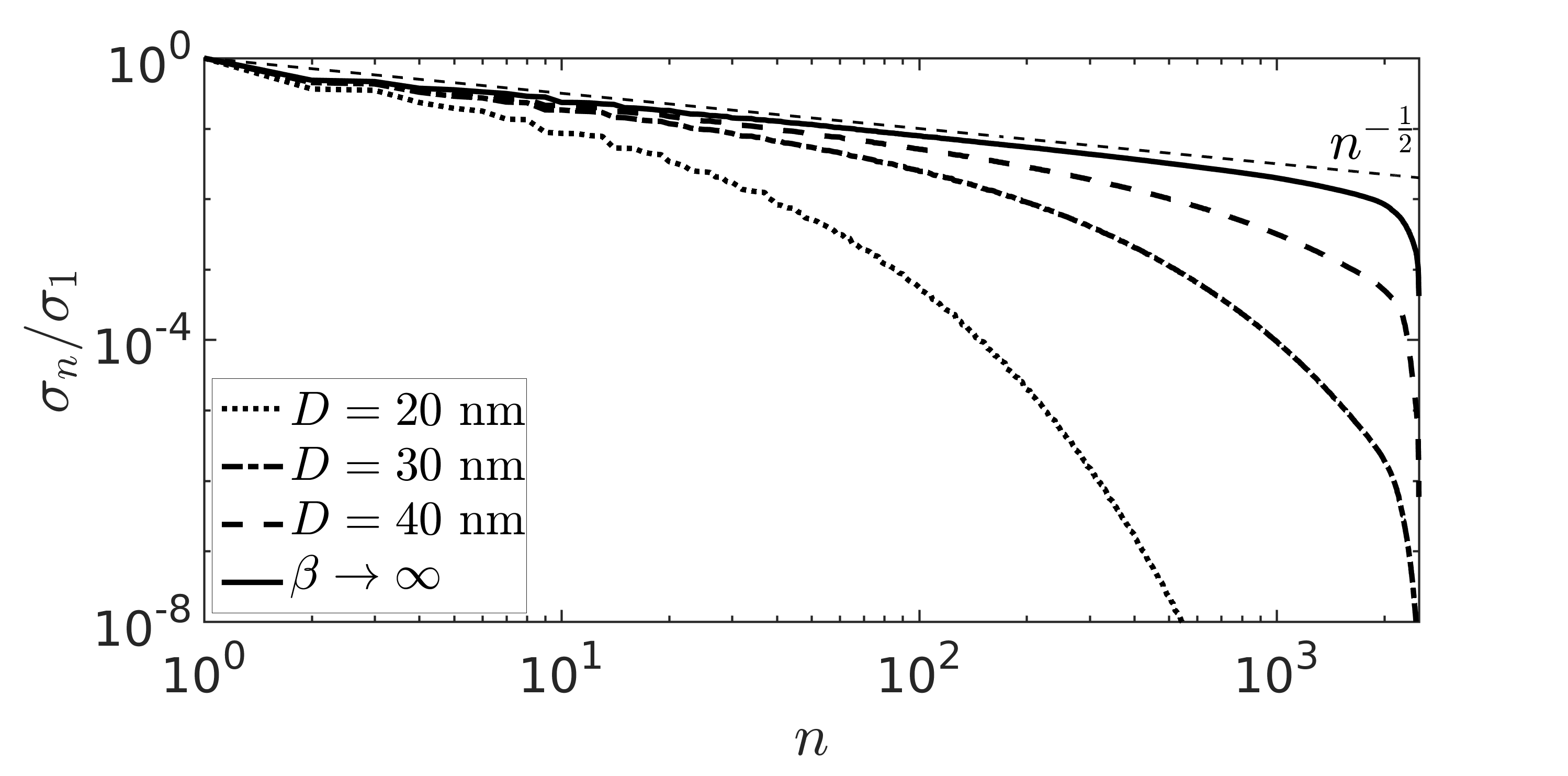}
		\caption{FFP 2D  }
		\label{subfig2:sv_FFP_sin}
	\end{subfigure}	
    \begin{subfigure}[t]{0.32\textwidth}
 		\includegraphics[width=\textwidth,trim={1.4cm 0 4.8cm 0},clip]{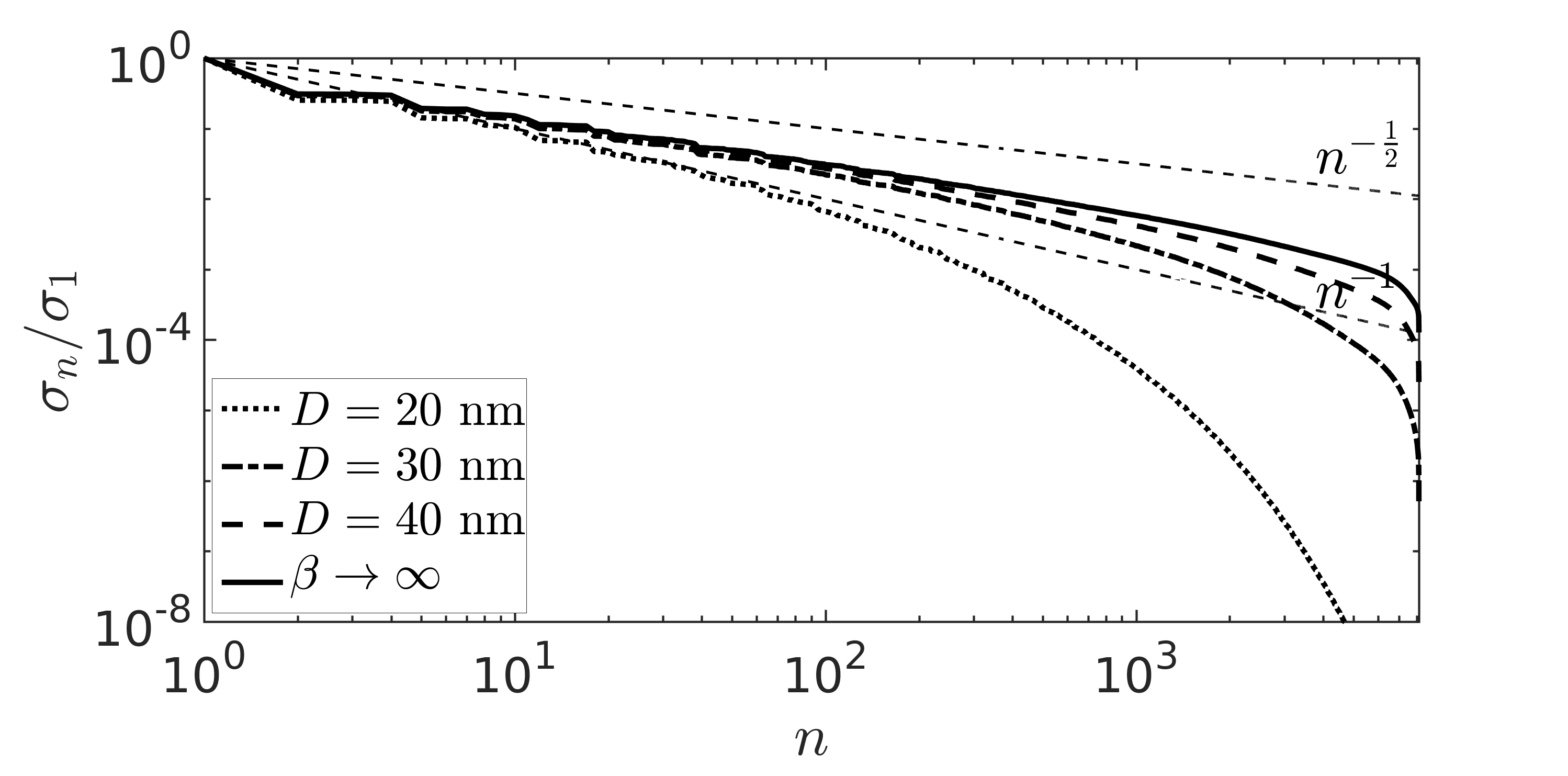}
		\caption{FFP 3D   }
		\label{subfig3:sv_FFP_sin}
	\end{subfigure}
 \caption{SV decay for sinusoidal excitation patterns and different particle diameters in (a) 1D, (b) 2D, and (c) 3D domains.}
 \label{fig:sv_FFP_sin}
\end{figure}

\begin{figure}[hbt!]
\centering
 	\begin{subfigure}[t]{0.32\textwidth}
 		\includegraphics[width=\textwidth,trim={1.4cm 0 4.8cm 0},clip]{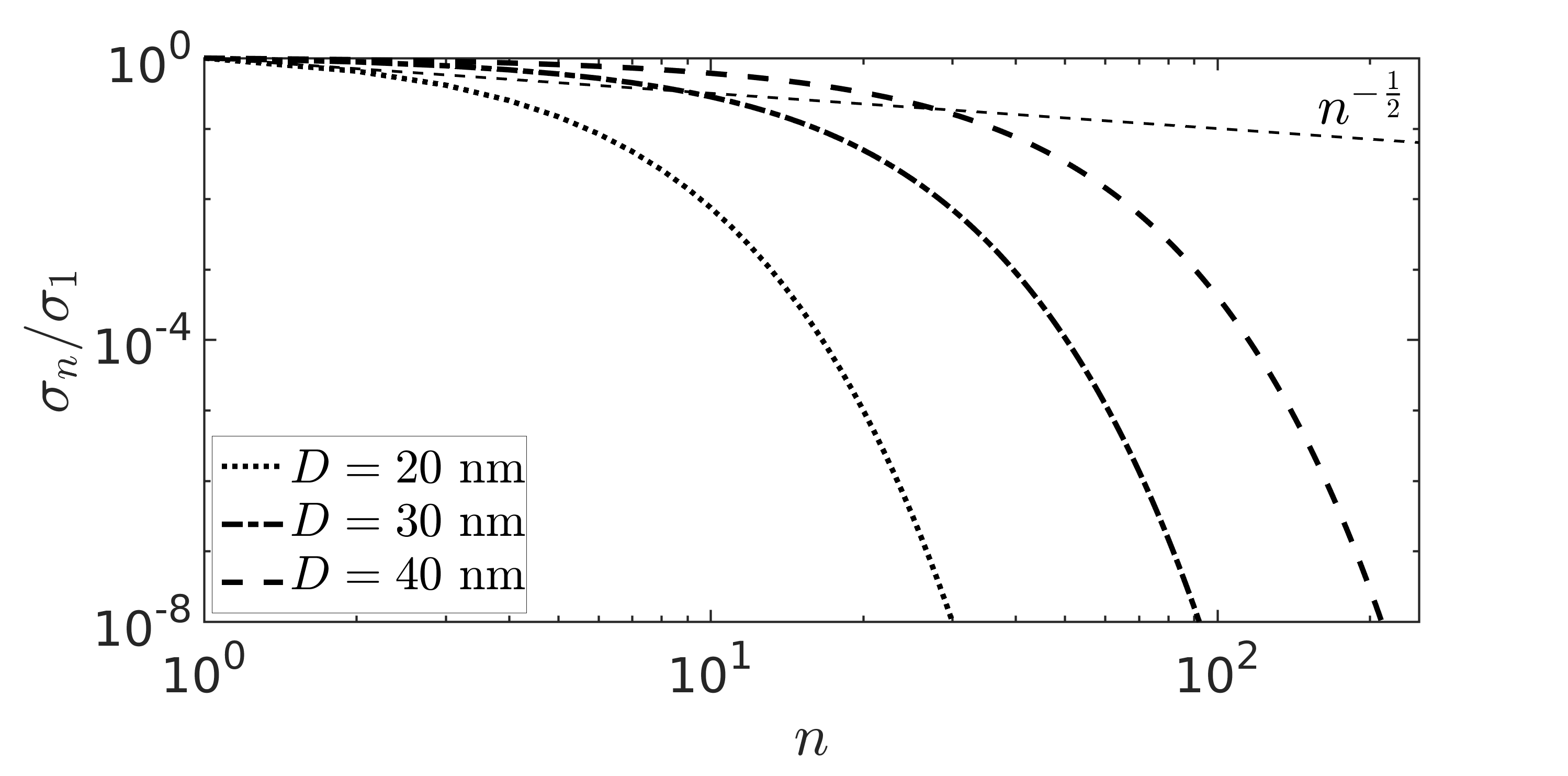}
		\caption{FFP 1D}
		\label{subfig1:sv_FFP_tri}
	\end{subfigure}
 	\begin{subfigure}[t]{0.32\textwidth}
 		\includegraphics[width=\textwidth,trim={1.4cm 0 4.8cm 0},clip]{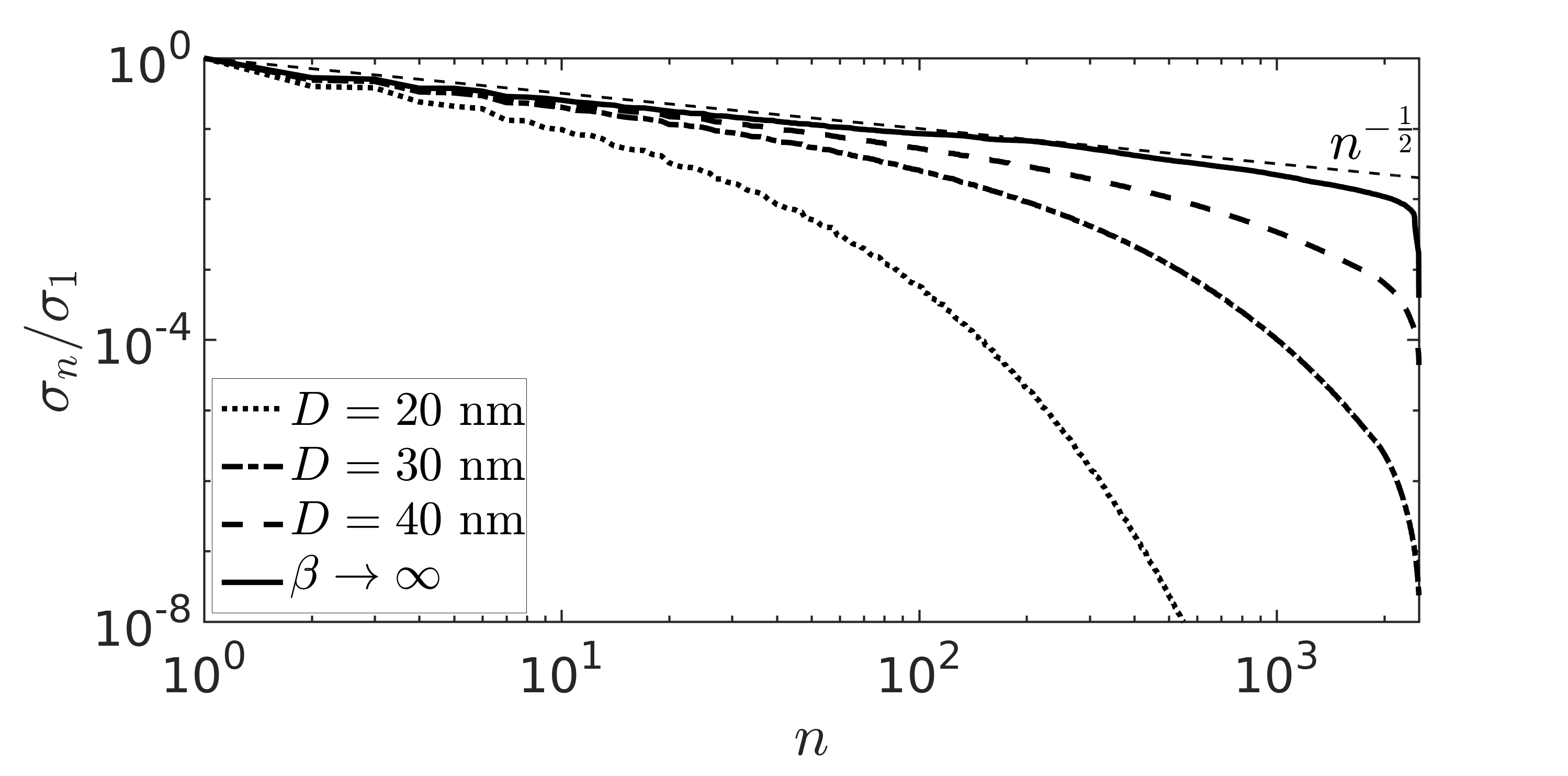}
		\caption{FFP 2D  }
		\label{subfig2:sv_FFP_tri}
	\end{subfigure}
	 	\begin{subfigure}[t]{0.32\textwidth}
 		\includegraphics[width=\textwidth,trim={1.4cm 0 4.8cm 0},clip]{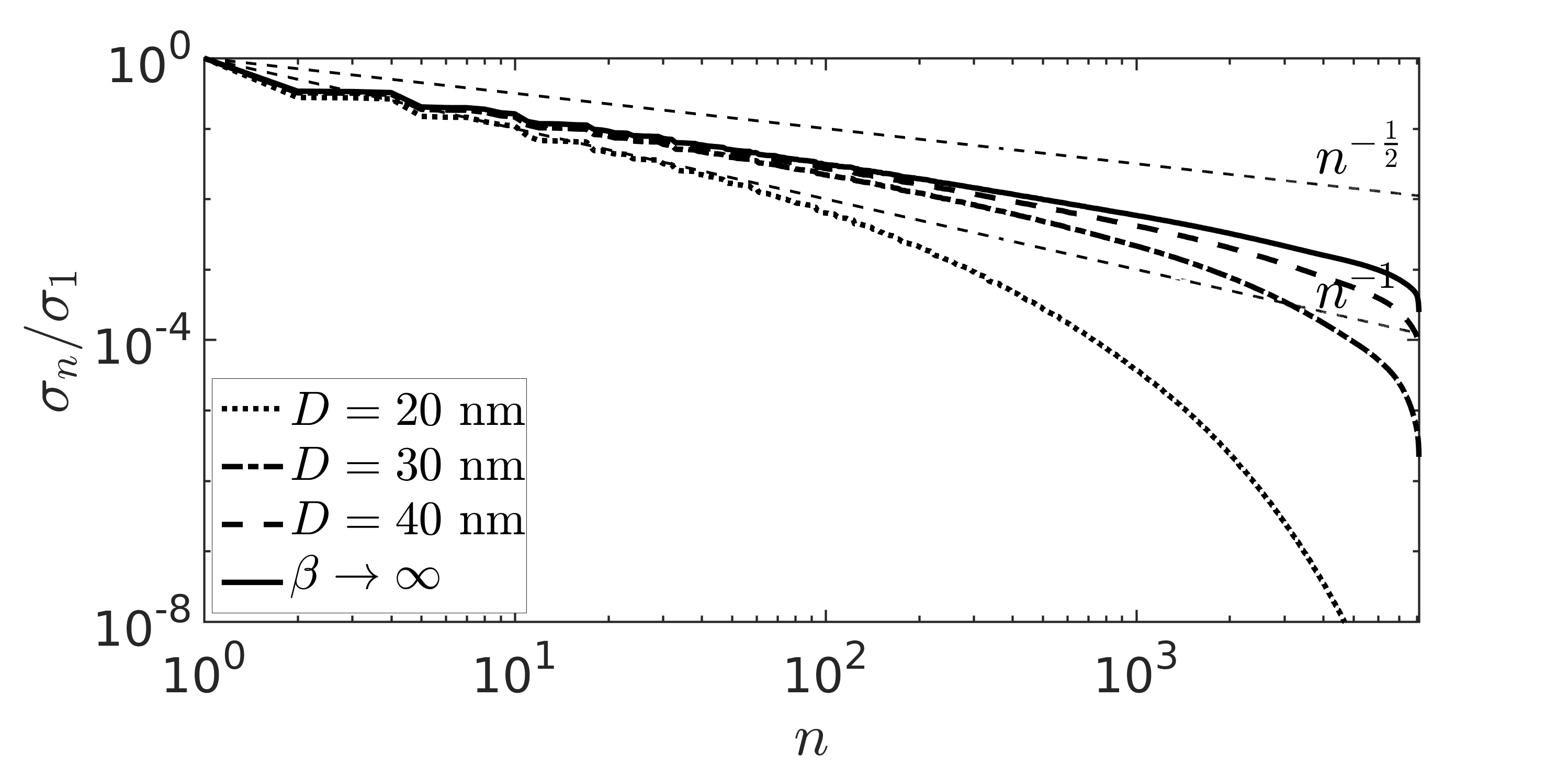}
		\caption{FFP 3D }
		\label{subfig3:sv_FFP_tri}
	\end{subfigure}
 \caption{SV decay for triangular excitation patterns and different particle diameters in (a) 1D, (b) 2D, and (c) 3D domains.
 }
 \label{fig:sv_FFP_tri}
\end{figure}

\begin{figure}
\centering
 	\begin{subfigure}[t]{0.32\textwidth}
 		\includegraphics[width=\textwidth,trim={1.4cm 0 4.8cm 0},clip]{figures/FFP_std_1D_series1_sv_decay_data_overview_1D_sin.png}
		\caption{FFP 1D  }
		\label{subfig1:sv_FFP_sin_single}
	\end{subfigure}
 	\begin{subfigure}[t]{0.32\textwidth}
 		\includegraphics[width=\textwidth,trim={1.4cm 0 4.8cm 0},clip]{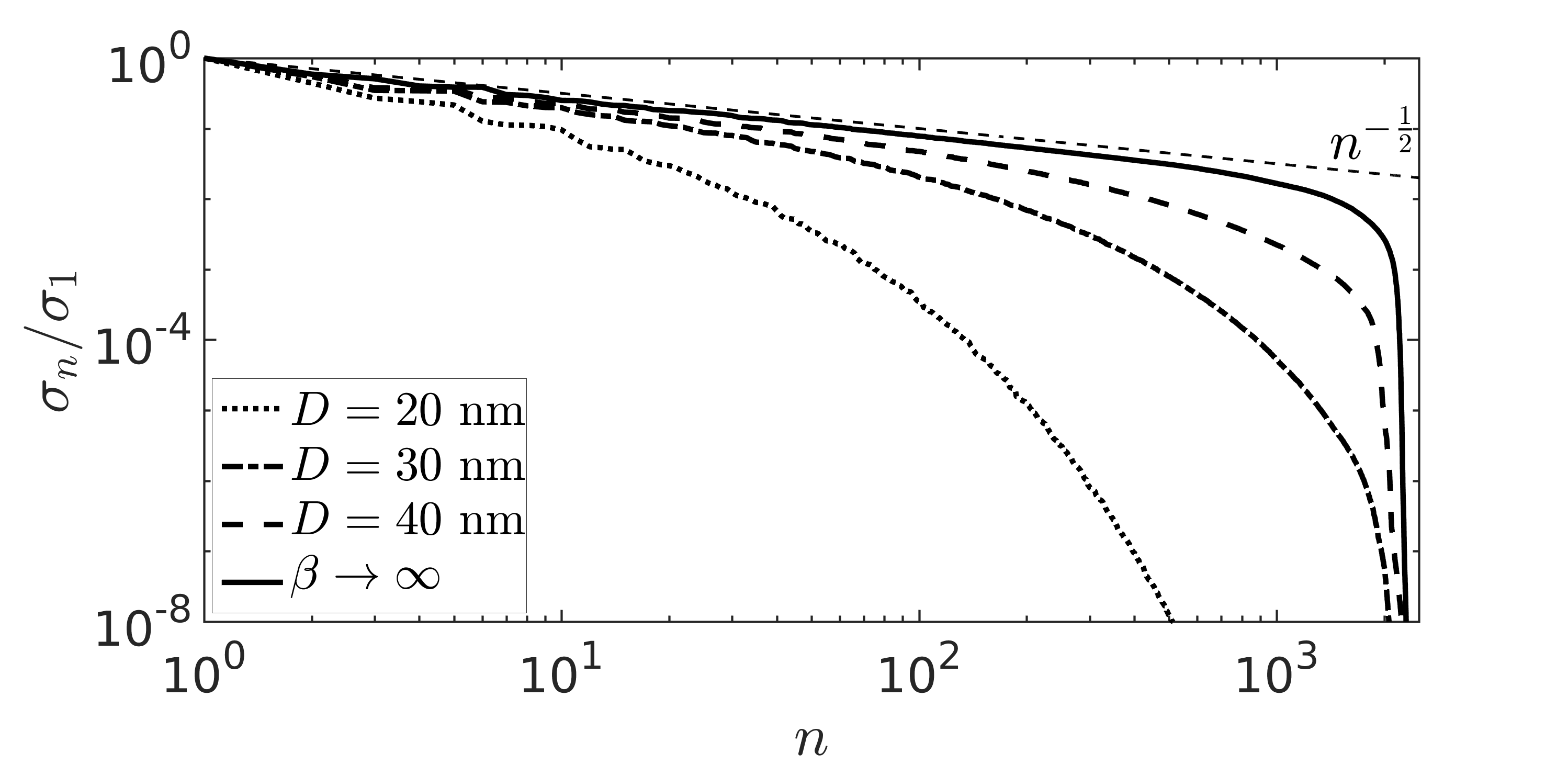}
		\caption{FFP 2D  }
		\label{subfig2:sv_FFP_sin_single}
	\end{subfigure}
	 	\begin{subfigure}[t]{0.32\textwidth}
 		\includegraphics[width=\textwidth,trim={1.4cm 0 4.8cm 0},clip]{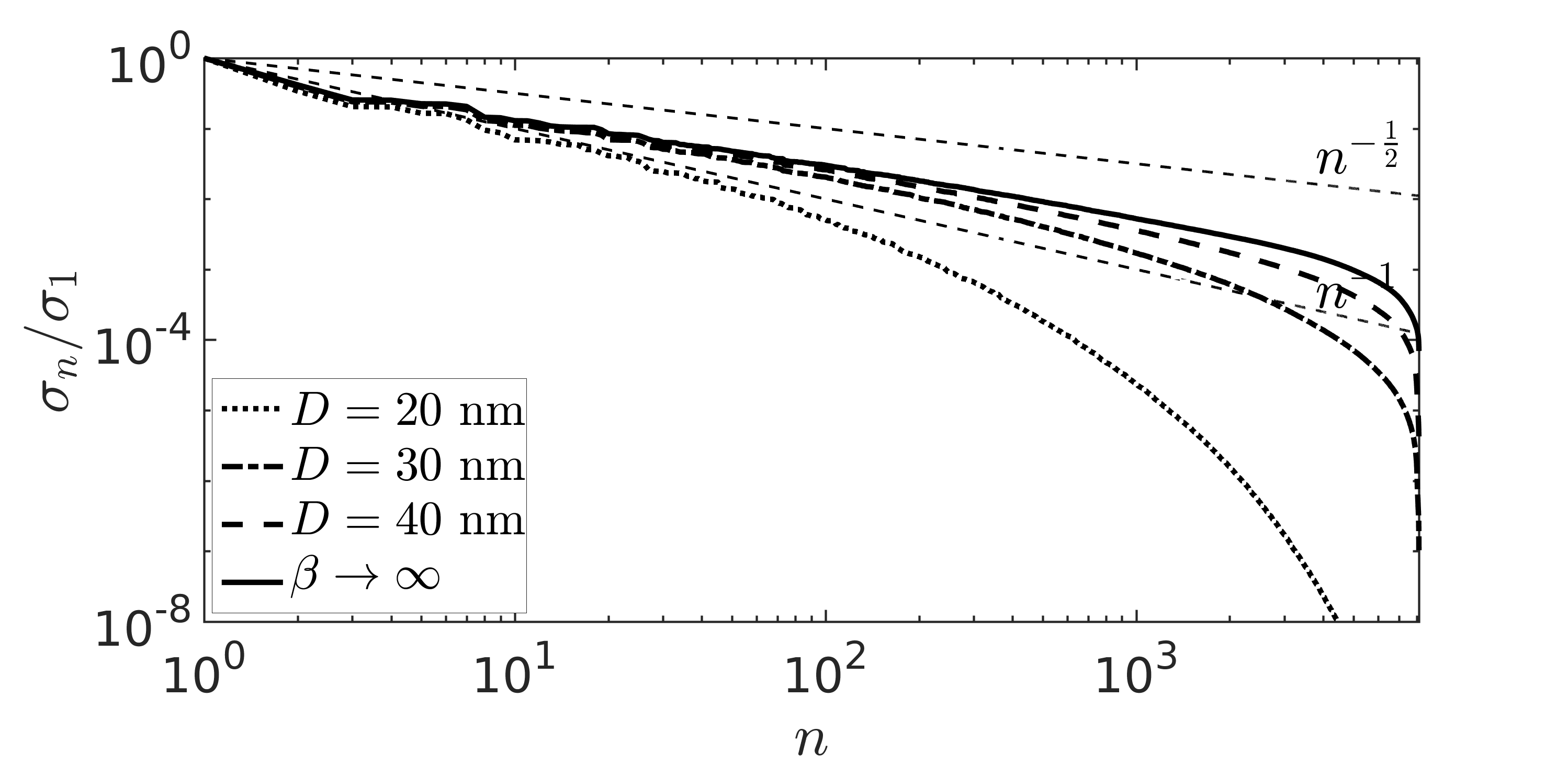}
		\caption{FFP 3D   }
		\label{subfig3:sv_FFP_sin_single}
	\end{subfigure}
 \caption{SV decay for sinusoidal excitation patterns and different particle diameters $D$, with one single receive coil in (a) 1D, (b) 2D, and (c) 3D domains.
 }
 \label{fig:sv_FFP_sin_single}
\end{figure}

Next we consider the FFL case. We use a field free line in affine planes of the $e_1$-$e_2$-plane,
following \cite{Knopp2011FFLfourierslice}, where $e_i$ denotes the $i$th canonical Cartesian
coordinate. % A rotation of the FFL also takes place in this plane
We use a dynamic selection field $g: \R^3 \times I \rightarrow \R^3$, i.e., $g(x,t)=P(t)^tGP(t)x$, with
$P:I\rightarrow \R^{3\times 3}$ given by
\begin{equation*}
 P(t)=\begin{pmatrix}
       \cos(2\pi f_\mathrm{rot} t) & \sin(2\pi f_\mathrm{rot} t) & 0 \\ -\sin(2\pi f_\mathrm{rot} t) & \cos(2\pi f_\mathrm{rot} t)& 0 \\ 0& 0 &1     \end{pmatrix}
\end{equation*}
for $f_\mathrm{rot} >0$. A translational movement is performed by using a drive field perpendicular to the FFL
at any time $t\in I$. Assuming $G_{1,1}=0$, the non-rotated FFL is the $e_1$-axis and we obtain the rotated drive field
\begin{equation*}
 h(t)=-P(t)^t \tilde{h}(t) =-A_2 \sin(2 \pi f_2 t) ( - \sin( 2 \pi f_\mathrm{rot} t) , \cos( 2 \pi f_\mathrm{rot} t) , 0)^t   - A_3 \sin(2 \pi f_3 t) e_3.
\end{equation*}
for a non-rotated drive field $\tilde{h}(t)=(0,A_2 \sin(2 \pi f_2 t),A_3 \sin(2 \pi f_3 t)$, $A_2,A_3,f_2,f_3>0$.
The parameters used to obtain the FFL results are given in Table \ref{tab:sim-parameters}. We use measurement times
comparable to the FFP case. The rotation frequency $f_\mathrm{rot}$ is chosen such that two complete rotations are
performed during the whole measurement time in 2D. In 3D, a slowly varying field moves the FFL plane in $e_3$-direction.
The corresponding frequency $f_3$ is chosen such that the field's period is the whole measurement time.
Since the rotation matrix $P(t)$ depends smoothly on time $t$, it does not influence the analysis in Theorem
\ref{thm:limit2}. The numerical results are given in Fig. \ref{fig:sv_FFL_sin}. It is observed that the
SVs decay slower, when the particle diameter $D$ increases. In 2D we observe a decay rate slightly slower than
$O(n^{-\frac12})$ for the leading SVs for the largest diameter. Qualitatively, the 3D limit case agrees with
the prediction in Theorem \ref{thm:limit2} but it does not completely approach $O(n^{-\frac14})$
(the sharp one is conjectured to be $O(n^{-\frac12})$; see Remark \ref{rem:conjecture2}).

\begin{figure}[hbt!]
\centering
 	\begin{subfigure}[t]{0.35\textwidth}
 		\includegraphics[width=\textwidth,trim={1.4cm 0 4.8cm 0},clip]{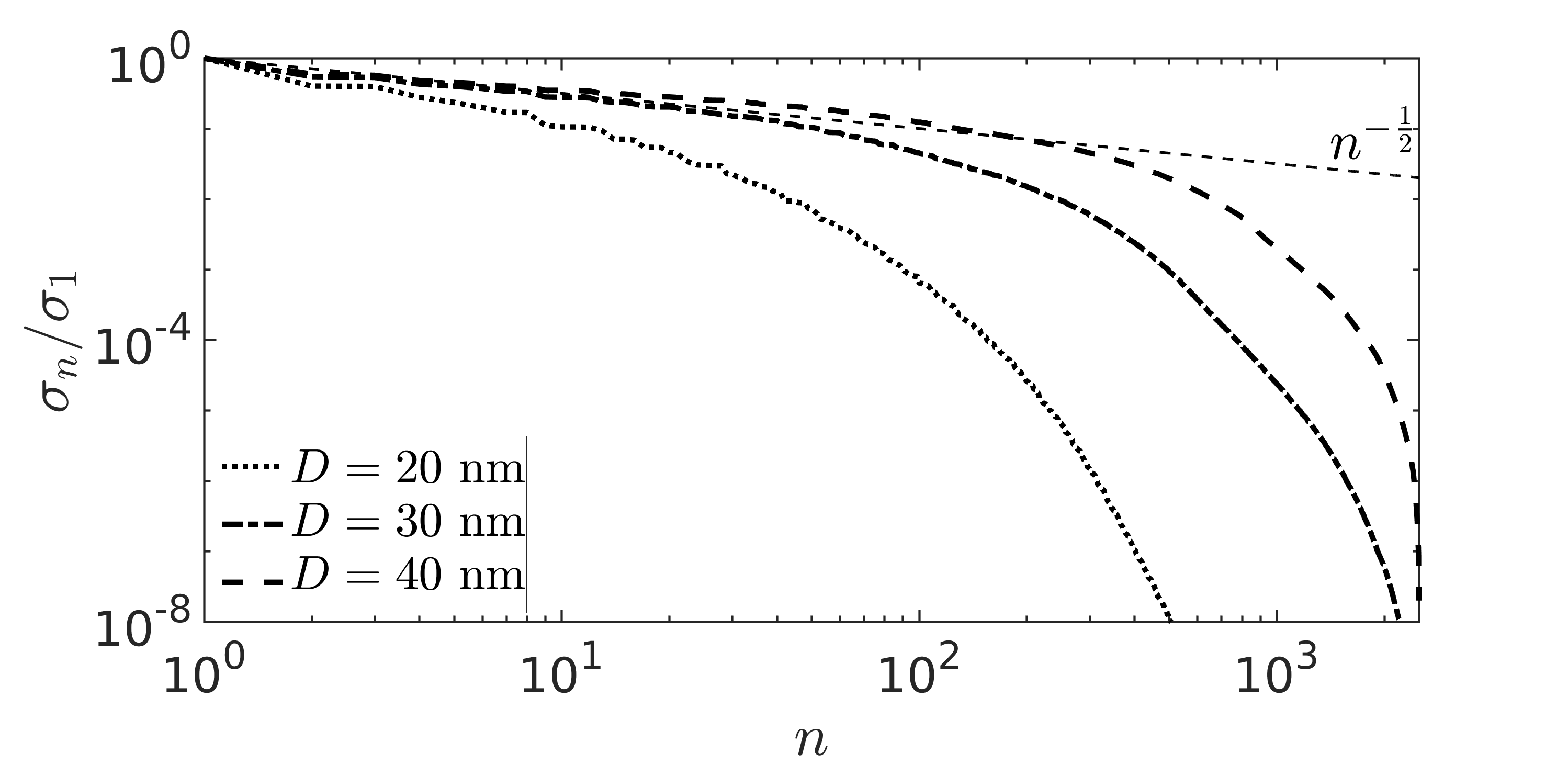}
		\caption{FFL 2D  }
		\label{subfig1:sv_FFL_sin}
	\end{subfigure}
	 	\begin{subfigure}[t]{0.35\textwidth}
 		\includegraphics[width=\textwidth,trim={1.4cm 0 4.8cm 0},clip]{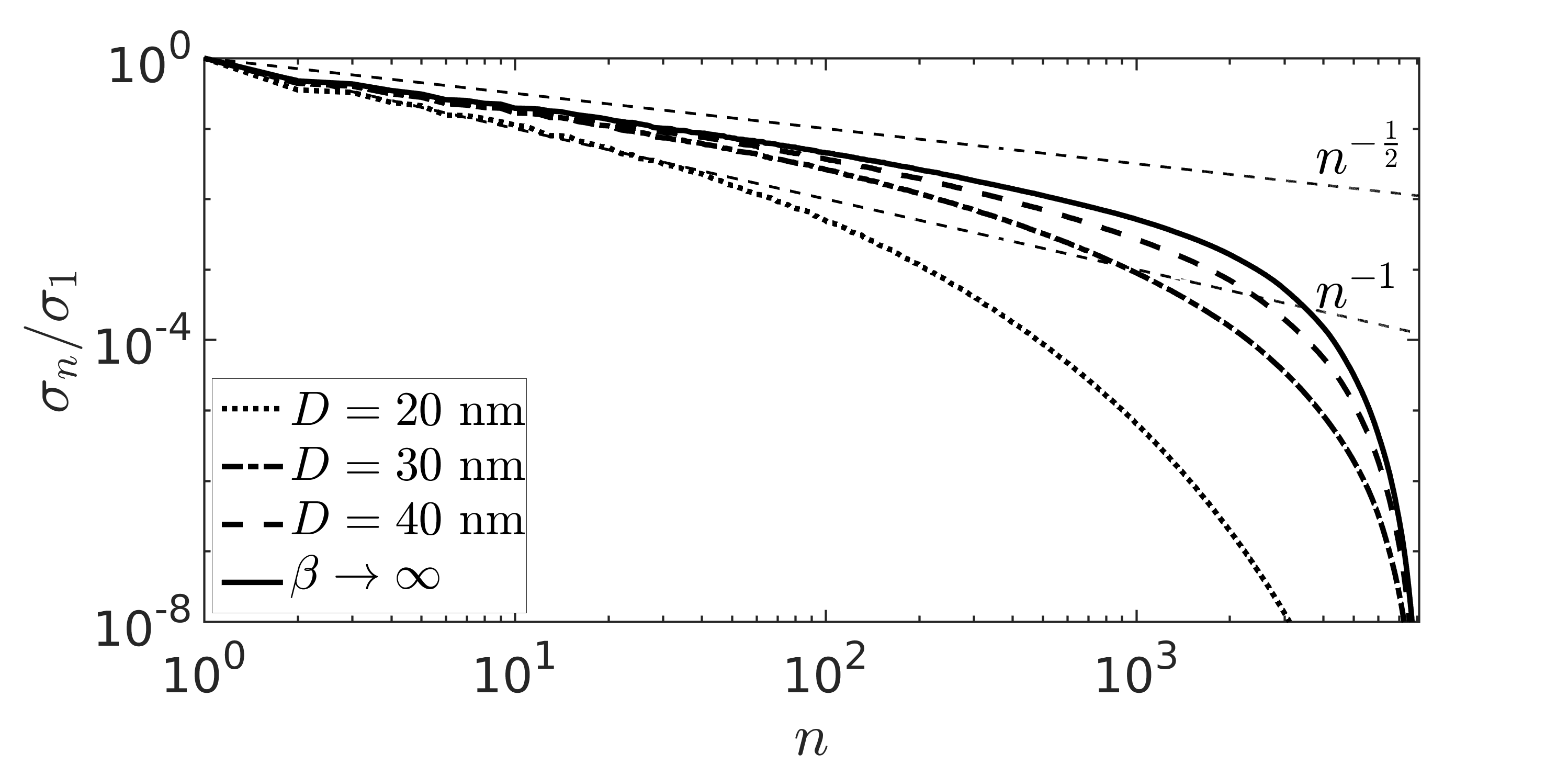}
		\caption{FFL 3D  }
		\label{subfig2:sv_FFL_sin}
	\end{subfigure}
 \caption{SV decay for sinusoidal FFL excitation patterns and different particle diameters in (a) 2D and (b) 3D domains.}
 \label{fig:sv_FFL_sin}
\end{figure}

Last, we compare the FFP and FFL cases. The comparative results in Fig. \ref{fig:sv_FFL_vs_FFP} show
clearly the slower SV decays for the leading SVs for the FFL case
for one example particle diameter in 2D and the limit case in 3D.
Equivalently, for a given noise level, there are more significant singular values in the FFL case
than in the FFP case, which potentially allows obtaining higher resolution reconstructions.

\begin{figure}
\centering
 	\begin{subfigure}[t]{0.35\textwidth}
 		\includegraphics[width=\textwidth,trim={1.4cm 0 4.8cm 0},clip]{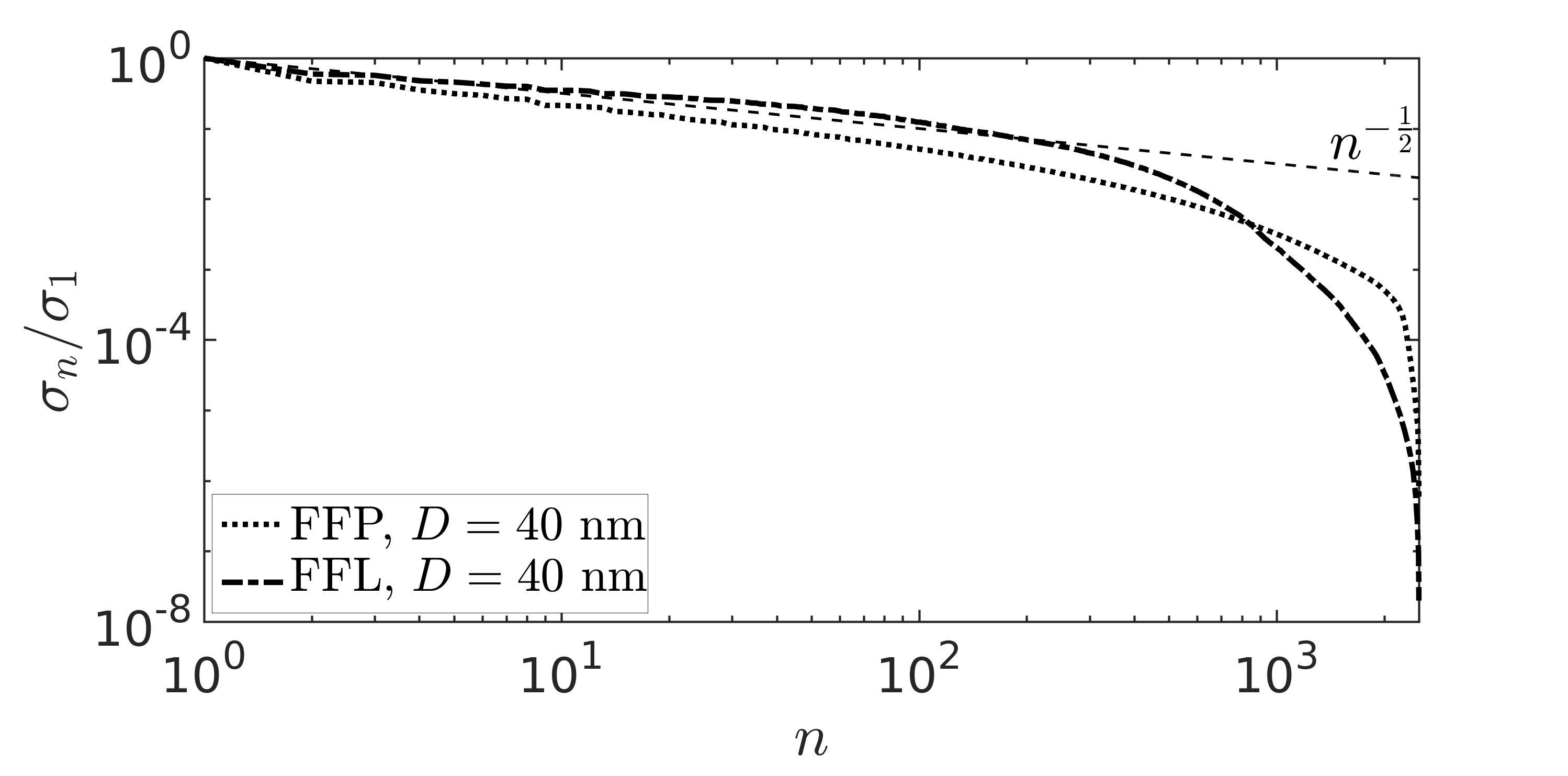}
		\caption{FFP vs. FFL 2D  }
		\label{subfig1:sv_FFL_vs_FFP}
	\end{subfigure}
	 	\begin{subfigure}[t]{0.35\textwidth}
 		\includegraphics[width=\textwidth,trim={1.4cm 0 4.8cm 0},clip]{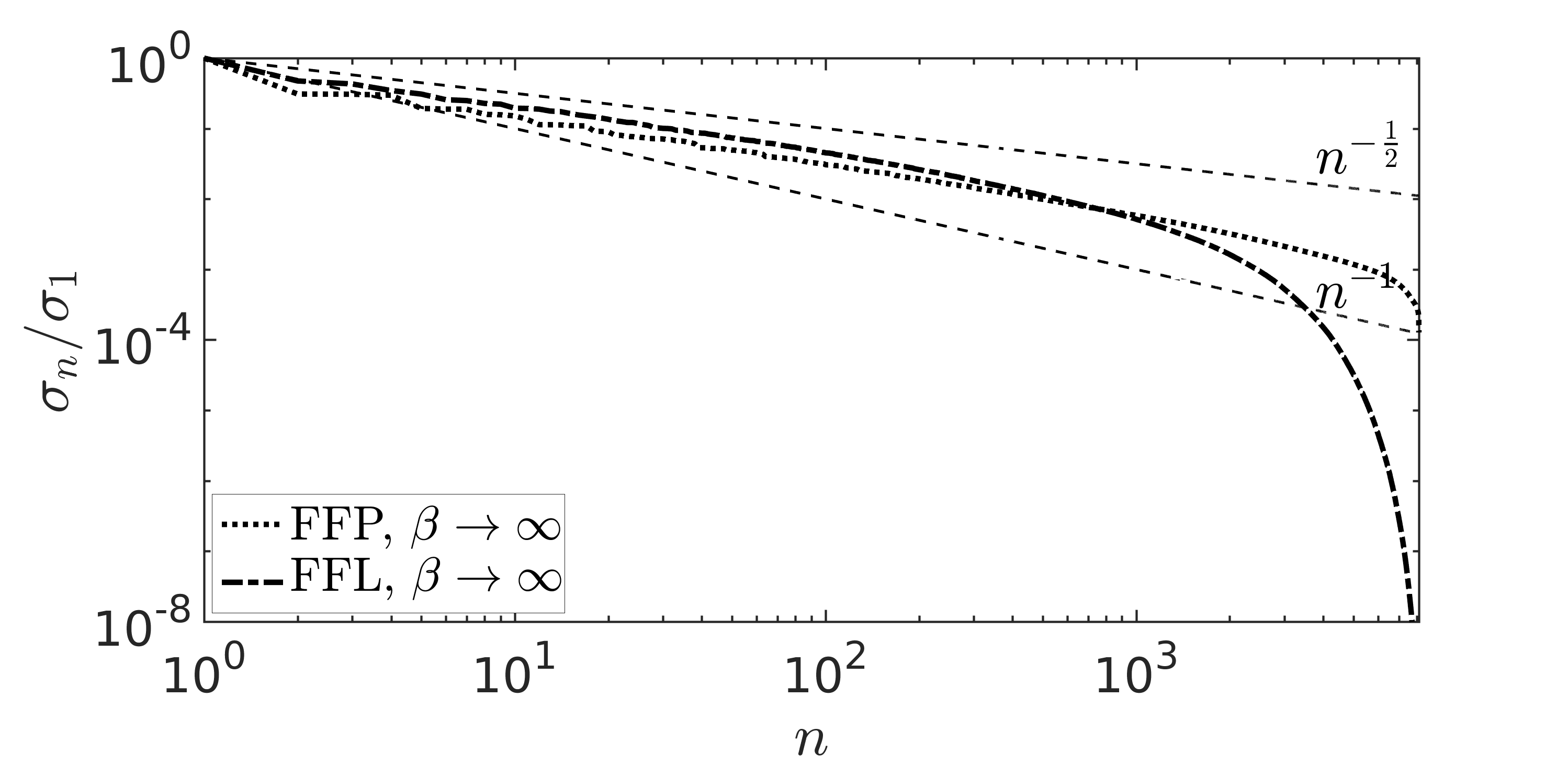}
		\caption{FFP vs. FFL 3D  }
		\label{subfig2:sv_FFL_vs_FFP}
	\end{subfigure}
 \caption{The comparison of SV decay for the FFP and FFL cases with sinusoidal excitation patterns in (a) 2D and (b) 3D domains. }
 \label{fig:sv_FFL_vs_FFP}
\end{figure}

\section{Discussions and concluding remarks}\label{sec:conc}
In this work, we have analyzed the nonfiltered MPI equilibrium model with common experimental setup, 
and studied the degree of ill-posedness of the forward operator via SV decay for Sobolev smooth bivariate functions. Our analysis gives rise to the
following findings. The standard setup in MPI using trigonometric drive field patterns and a linear selection
field leads to a severely ill-posed problem. For the nonfiltered model, even if the trajectories $h(t)$ are
nonsmooth, for the linear selection field, Theorem \ref{thm:non-filter2}
predicts an exponential SV decay. The resolution
improvement for larger diameters reported in \cite{Weizenecker2007,knopp2008singular} can be explained by
considering the limit case. Two different MPI methodologies, i.e., FFP and FFL, are distinguished in this work, where the FFL case has not been studied theoretically before.
The FFL approach can potentially lead to a less ill-posed problem than the FFP approach (Theorems
\ref{thm:limit1} and \ref{thm:limit2}). In the discrete setup, the temporal behavior of the FFL 
needs to be chosen more carefully to fully exploit the potential benefits predicted by the limit case. In
particular, this has to be considered when parameterizing the FFL case with respect to hardware limitations,
e.g., rotation frequency \cite{Bakenecker2017}. Further, a theoretical result for the filtered problem exhibits
a faster SV decay for analog filters with high (temporal) regularity.

The theoretical findings in this work build the basis for several directions of further research of
both theoretical and empirical nature. First, the predicted severe ill-posedness for small particle diameters
opens the avenue for developing more efficient algorithms based on low-rank approximations of the
forward operator. Second, the techniques are fairly general and might also be employed to
analyze other more refined models for MPI, which include particle dynamics and particle-particle
interactions \cite{Kluth:2017}. Last, for image
reconstruction, it is natural to analyze regularized formulations
in the context of regularization theory \cite{EnglHankeNeubauer:1996,ItoJin:2015}.

\section*{Acknowledgements}
The authors are grateful to the referee and the board member for constructive comments, which have
improved the quality of the work. T. Kluth is supported by the Deutsche Forschungsgemeinschaft (DFG)
within the framework of GRK 2224/1 ``Pi$^3$ : Parameter Identification - Analysis, Algorithms,
Applications'', and G. Li partially by Hausdorff Center for Mathematics, University of Bonn and
a Newton international fellowship from Royal Society.

\bibliographystyle{abbrv}
\bibliography{literature}

\end{document}